\DeclarePairedDelimiter\abs{\lvert}{\rvert}
\DeclarePairedDelimiter\norm{\lVert}{\rVert}
\let\oldabs\abs
\def\abs{\@ifstar{\oldabs}{\oldabs*}}
\let\oldnorm\norm
\def\norm{\@ifstar{\oldnorm}{\oldnorm*}}
\theoremstyle{plain}
\newtheorem{thm}{Theorem}[section]
\newtheorem{thm*}{Theorem}
\newtheorem*{thmA*}{Theorem A}
\newtheorem{prop}[thm]{Proposition}
\newtheorem{prop*}{Proposition}
\newtheorem{lemma}[thm]{Lemma}
\theoremstyle{definition}
\newtheorem{remark}[thm]{Remark}
\theoremstyle{definition}
\theoremstyle{definition}
\theoremstyle{definition}
\newtheorem{defin}[thm]{Definition}
\theoremstyle{definition}
\newtheorem*{notation}{Notation}
\newtheorem*{acknowledgements}{Acknowledgements}
\newtheorem*{claim*}{Claim}
\newtheorem*{claim1*}{Claim 1}
\newtheorem*{claim2*}{Claim 2}
\newtheorem*{claim3*}{Claim 3}
\newtheorem*{claim4*}{Claim 4}
\newtheorem*{fact*}{Fact}
\newtheorem*{fact1*}{Fact 1}
\newtheorem*{fact2*}{Fact 2}
\newtheorem*{fact3*}{Fact 3}
\newtheorem*{fact4*}{Fact 4}
\newtheorem*{assumption*}{Assumption}
\newtheorem*{assumption1*}{Assumption 1}
\newtheorem*{assumption2*}{Assumption 2}
\newtheorem*{assumption3*}{Assumption 3}
\newtheorem*{assumption4*}{Assumption 4}
\newtheorem*{step1*}{Step 1}
\newtheorem*{step2*}{Step 2}
\newtheorem*{step3*}{Step 3}
\newtheorem*{remark*}{Remark}
\newcommand{\sphere}{\mathbb{S}}
\newcommand{\R}{\mathbb{R}}
\newcommand{\N}{\mathbb{N}}
\newcommand{\normps}[3]{\norm{#1}_{L^{#2}(#3)}}
\newcommand{\dv}{\,d V_g}
\newcommand{\dg}{\Delta_g}
\newcommand{\nabg}{\nabla_g}
\newcommand{\obig}{\mathcal{O}}
\newcommand{\ssubset}{\subset\joinrel\subset}
\DeclareMathOperator\supp{supp}
\renewcommand{\epsilon}{\varepsilon}
\renewcommand{\theta}{\vartheta}
\let\temp\phi
\let\phi\varphi
\let\varphi\temp
\title{Compactness of Palais-Smale sequences with controlled Morse Index for a Liouville type functional}
\author{Francesco Malizia \thanks{Scuola Normale Superiore, Piazza dei Cavalieri 7, 56126 Pisa. e-mail: francesco.malizia@sns.it}}
\date{}
\begin{document}

	\maketitle

	{\footnotesize
		\begin{abstract}
			
			\noindent We prove that Palais-Smale sequences for Liouville type functionals on closed surfaces are precompact whenever they satisfy a bound on their Morse index. As a byproduct, we obtain a new proof of existence of solutions for Liouville type mean-field equations in a supercritical regime. Moreover, we also discuss an extension of this result to the case of singular Liouville equations.
			
			\vspace{3ex}
			
			\noindent{\it Key Words:} Liouville type Equations, Blow-up analysis.

			\noindent{{\bf MSC 2020:} 35B44, 35J61, 35R01, 49J35} .
			
	\end{abstract}}

\section{Introduction}\label{sectionintro}

Let $(M,g)$ be a closed surface of unitary volume. We are interested in the following equation:

\begin{equation}\label{meanfield}
	-\dg u =\lambda\left(\frac{he^u}{\int_M he^u \dv}-1\right),
\end{equation}
where $h>0$ is a smooth function, $\lambda>0$ is a real number and $\dg$ is the Laplace-Beltrami operator. Equations like \eqref{meanfield} are usually referred to as \emph{Liouville type} mean-field equations, and are motivated for instance by problems in conformal geometry and mathematical physics.

In conformal geometry, a well-known problem, whose study was initiated by Kazdan and Warner in \cite{kazdanwarner-Annals-1974}, is that of conformally prescribing the Gaussian curvature. More precisely, given $(M,g)$ as above, if $\tilde{g}:=e^ug$ is a new metric conformal to $g$ with conformal factor $e^u$, then the function $u$ solves 
\begin{equation}\label{gausscurvatureeq}
	-\dg u+ 2K_g u=2K_{\tilde{g}} e^u \qquad \text{on $M$,}
\end{equation}
where $K_g$ and $K_{\tilde{g}}$ denote the Gaussian curvatures of $g$ and $\tilde{g}$ respectively. By the classical Uniformization Theorem, there always exists a metric $\tilde{g}$ conformal to $g$ whose Gaussian curvature is constant. More generally, one could try to identify all the functions $K$ on $M$ which can be realised as the Gaussian curvature of some metric conformal to $g$; this amounts to find all the functions $K$ for which equation \eqref{gausscurvatureeq} is solvable with $K$ in place of $K_{\tilde{g}}$. There are some obstructions to the existence of solutions for \eqref{gausscurvatureeq}, the simplest one being Gauss-Bonnet's formula, and the case $(M,g)=(\sphere^2,g_{\sphere^2})$, referred to as \emph{Nirenberg's problem}, represents the most delicate and thoroughly studied situation, see e.g. \cite[Chapter 6]{aubinbook} and references therein.

In mathematical physics, equation \eqref{meanfield} arises when one employs methods of classical statistical mechanics to describe large isolated vortices, cfr. \cite{cagliotilionsmarchioropulvirenti1992}, \cite{cagliotilionsmarchioropulvirenti1995} (here one actually gets the counterpart of \eqref{meanfield} on bounded domains with Dirichlet boundary conditions), or in selfdual Gauge field theories, see \cite{tarantellobook} as well as its references.

\bigskip

 Solutions of \eqref{meanfield} are critical points of the functional

\begin{equation}\label{functional}
	J_\lambda (u)= \frac{1}{2}\int_M \abs{\nabg u}^2 \dv +\lambda \int_M u\dv -\lambda \log\left(\int_M h e^u \dv\right),
\end{equation} 
for $u\in H^1(M)$, which is well-defined and smooth by virtue of the Moser-Trudinger inequality \eqref{moser-trudinger-inequality}. It is known that equation \eqref{meanfield} is solvable whenever $\lambda\notin 8\pi\N$: aside from the easy case $\lambda\in(0,8\pi)$ (where the aforementioned Moser-Trudinger inequality implies the coerciveness of $J_\lambda$, in which case one can minimize it), the existence of a solution for $\lambda>8\pi$ was proved using more sophisticated variational arguments together with the so-called ``Struwe's monotonicity trick" and blow-up analysis for sequences of solutions, see \cite{tarantellostruwe}, \cite{dingjostliwang-AIHP-1999}, \cite{malchiodi2008topologicalmethods}, \cite{djadli2008allgenus}. 

However, little is known about the behaviour of Palais-Smale sequences for $J_\lambda$. In this paper we address this last issue and show that, under an additional and natural second order assumption on such sequences, it is actually possible to prove that they subconverge to a critical point of \eqref{functional}.

As a consequence, we obtain a new proof of the existence of solutions for \eqref{meanfield} when $\lambda>8\pi$, $\lambda\notin 8\pi\N$, which does not rely upon Struwe's trick.

\medskip

Let $(u_n)_n$ be a Palais-Smale (PS) sequence for $J_\lambda$, namely $J_\lambda(u_n)\to c\in \R$ and $J'_\lambda(u_n)\to 0$ in $H^{-1}(M)$ as $n$ goes to infinity; here $J_\lambda'$ denotes the differential of the functional $J_\lambda$, and later on we will denote by $J_\lambda''$ its second order differential.
	If we add a constant $C$ to $u_n$, then $J_\lambda(u_n+C)=J_\lambda(u_n)$ and $J'_\lambda(u_n+C)=J'_\lambda(u_n)$; as a consequence, from now on we will renormalize $(u_n)_n$ by asking
	\begin{equation}\label{normalization}
		\int_M h e^{u_n} \dv=1 \qquad \forall n\in \N.
	\end{equation}

Given such a sequence $(u_n)_n$, we now have two possibilities: either there exists a subsequence which is bounded in $H^1(M)$, or else the whole sequence is unbounded. 

In the first case, one can easily prove the existence of a subsequence weakly converging in $H^1(M)$ to a critical point of \eqref{functional}, which is a weak (a posteriori classical) solution of \eqref{meanfield}, see e.g. \cite{malchiodi2008topologicalmethods} for details.

On the other hand, when $(u_n)_n$ is unbounded it is usually difficult to describe its behaviour. However, if we further assume $(u_n)_n$ to satisfy an additional bound on the Morse index, then we are able to characterize the loss of compactness.

The condition is the following: there exists $N\in\N$ such that, $\forall n\in\N$,
\begin{equation}\label{morseindexproperty}
	 \sup\left\{\dim(E)\mid E \text{ subspace of $H^1(M)$ s.t. } J_\lambda^{''}(u_n)[w,w]<-\frac{1}{n}\norm{w}^2 \,\,\, \forall w\in E\right\}\leq N.
\end{equation}
In particular, this condition implies that, whenever $(u_n)_n$ admits a subsequence converging to a limit $u$ in $H^1(M)$, then the Morse index of $J_\lambda$ at $u$ is at most $N$. Some comments on the naturality of assumption \eqref{morseindexproperty} are given later on in the introduction.

With the help of \eqref{morseindexproperty}, it is possible to carry out a blow-up analysis for our sequence $(u_n)_n$ and thus prove the main result of this paper:

 \begin{thm}\label{maintheorem}
	Given $\lambda\in (8\pi k,8\pi(k+1))$ and $k\in \N$, let $(u_n)_n$ be a Palais-Smale sequence for $J_\lambda$ which satisfies \eqref{normalization} and \eqref{morseindexproperty} for a suitable $N=N(k)$. Then $(u_n)_n$ is bounded in $H^1(M)$; in particular, it subconverges to a classical solution of \eqref{meanfield}.
\end{thm}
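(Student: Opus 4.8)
The plan is to argue by contradiction. Suppose $(u_n)_n$ is unbounded in $H^1(M)$; after passing to a subsequence we may assume $\|u_n\|_{H^1}\to\infty$. The heart of the matter is a blow-up analysis for Palais-Smale sequences, which is more delicate than for sequences of exact solutions because the equation is only satisfied up to an error term going to zero in $H^{-1}(M)$. Writing the normalization \eqref{normalization}, each $u_n$ solves

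\begin{equation*}
  -\dg u_n = \lambda\bigl(h e^{u_n}-1\bigr) + g_n,
\end{equation*}
where $g_n\to 0$ in $H^{-1}(M)$. The first step is to run a concentration-compactness dichotomy in the spirit of Brezis-Merle and Li-Shafrir, adapted to the PS setting: one shows that, up to subsequence, the measures $\lambda h e^{u_n}\,dV_g$ converge weakly-$*$ to a measure of the form $\sum_{i=1}^m 8\pi\sigma_i\,\delta_{p_i}+ r\,dV_g$, identifying a finite set of blow-up points $\{p_1,\dots,p_m\}$ where mass concentrates in quantized amounts. The key obstacle here is establishing the quantization of the local masses at exactly integer multiples of $8\pi$; the usual arguments rely on having genuine solutions, so I expect that controlling the PS error $g_n$ inside the Pohozaev-type identities, and showing it does not contribute to the leading order blow-up profile, will require the most care.

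\medskip

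The decisive use of the Morse index bound \eqref{morseindexproperty} comes next. The strategy is to show that each blow-up point forces the existence of a direction (or a finite-dimensional space of directions) along which the second variation $J_\lambda''(u_n)$ is strongly negative, quantitatively below $-\tfrac{1}{n}\|w\|^2$. Concretely, around each concentration point $p_i$ one rescales $u_n$ to obtain, in the limit, a standard bubble solving the Liouville equation on $\R^2$; the linearized operator at such a bubble is known to have a nontrivial negative eigenspace. Pulling back suitably localized and rescaled eigenfunctions (cut off near $p_i$ and with disjoint supports for distinct $i$) produces, for $n$ large, a subspace $E\subset H^1(M)$ of dimension at least $m$ (in fact proportional to $\sum_i\sigma_i$) on which the quadratic form is negative definite with the required margin. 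By \eqref{morseindexproperty} this gives $m\le N$, so the number of bubbles is \emph{a priori} bounded. This is the conceptual crux: the Morse index bound converts into a uniform bound on the number of blow-up points.

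\medskip

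Finally I would combine the bubbling picture with the energy/neck analysis to reach a contradiction with the range $\lambda\in(8\pi k,8\pi(k+1))$. Integrating the equation over $M$ and using \eqref{normalization}, the total mass is $\lambda = \sum_{i=1}^m 8\pi\sigma_i + \lambda\int_M r\,dV_g$, and a careful neck analysis (again tracking the vanishing PS error) should show the residual $r$ integrates to zero, so that $\lambda\in 8\pi\N$. Since $\lambda$ is assumed to lie strictly between consecutive multiples of $8\pi$, this is impossible, contradicting the assumption that $(u_n)_n$ is unbounded. Hence $(u_n)_n$ must be bounded in $H^1(M)$, and standard arguments (weak convergence plus elliptic regularity applied to the limiting equation, the PS error disappearing in the limit) upgrade this to subconvergence to a classical solution of \eqref{meanfield}. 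The choice $N=N(k)$ is dictated by the index count: it must exceed the maximal index any configuration of up to $k$ bubbles can produce, ensuring the contradiction is triggered before $\lambda$ can reach $8\pi(k+1)$.
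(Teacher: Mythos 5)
Your overall contradiction scheme (unboundedness forces the total mass $\lambda\int_M he^{u_n}\dv=\lambda$ to be quantized in $8\pi\N$, against the hypothesis $\lambda\in(8\pi k,8\pi(k+1))$) is the same as the paper's. But there is a genuine gap in where you deploy the Morse index hypothesis, and it sits exactly at the step you describe as the "conceptual crux". You use \eqref{morseindexproperty} to bound the \emph{number} of bubbles, by pulling back negative directions of the linearized bubble operator. That bound is in fact free: each extracted profile carries mass $8\pi+o_n(1)$ and the total mass is $\lambda<8\pi(k+1)$ by \eqref{normalization}, so at most $k$ bubbles can ever be extracted, with no second-order information needed. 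Conversely, the step you wave through --- "a careful neck analysis \dots should show the residual $r$ integrates to zero" --- is precisely the point where the argument for exact solutions breaks down for Palais-Smale sequences and where \eqref{morseindexproperty} is indispensable. For a PS sequence one cannot run the Pohozaev/sup+inf machinery of Li--Shafrir or Bartolucci--Tarantello on the necks, and there is no PDE mechanism forcing the residual mass there to vanish. The paper's Proposition \ref{simpleblowuptheorem} handles this by a dichotomy: either the residual neck mass concentrates in a sequence of dyadic annuli of fixed relative width, in which case a rescaling produces a limit of the singular Liouville equation $-\Delta\hat w=\mu e^{\hat w}+\beta\delta_0$ with $\beta\geq 8\pi$, whose Green-function lower bound violates $\int e^{\hat w}<\infty$; or the residual mass spreads over the whole neck, in which case one constructs $N+1$ piecewise-logarithmic test functions \eqref{testfunctiondef} supported in disjoint nested annuli, each satisfying $J_\lambda''(u_n)[w_{l,n},w_{l,n}]<-\frac1n\norm{w_{l,n}}_{H^1}^2$, contradicting \eqref{morseindexproperty}. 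Without an argument of this type your proof does not close: the number of bubbles being $\leq N$ is compatible with a nonquantized amount of mass lingering in the necks, and then $\lambda\notin 8\pi\N$ yields no contradiction.

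Two further points worth noting. First, the paper never uses Pohozaev identities to quantize the bubble mass; it replaces $u_n$ by the auxiliary sequence $v_n$ solving the exact equation \eqref{vn}, shows $\norm{u_n-v_n}_{H^1}\to0$, rescales $v_n$ (with a renormalizing constant $b_n$, since the $L^2$-norm is not scale invariant), and invokes the Chen--Li classification of entire solutions of $-\Delta w=\mu e^w$ with finite mass. This sidesteps the issue of "controlling the PS error inside Pohozaev-type identities" that you flag as the main obstacle. Second, for PS sequences one cannot exclude clustering of bubbles at a single point, so the global quantization requires the inductive cluster-by-cluster analysis of Section \ref{sectionglobalquantization} (Lemmas \ref{firstlemma} and \ref{secondlemma}), which again calls on the neck dichotomy --- and hence on \eqref{morseindexproperty} --- at every scale separating consecutive clusters.
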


The idea of the proof is to argue by contradiction and show that, if we assume $(u_n)_n$ to be unbounded, then the total ``mass" $\lambda \int_Mhe^{u_n}\dv$ (and thus $\lambda$ in view of \eqref{normalization}) should converge, up to a subsequence, to an integer multiple of $8\pi$, which is forbidden by our assumptions. Before entering more into details, we quickly describe the blow-up behaviour of a sequence of \emph{exact} solutions of \eqref{meanfield} in order to highlight the main differences and additional difficulties related to the analysis of Palais-Smale sequences.

Given a sequence of solutions for \eqref{meanfield}, either it is uniformly bounded in $L^\infty$ (and thus also in $C^{2,\alpha}$ by standard elliptic regularity), or else it blows up. In the latter case, we can suitably rescale around local maxima in order to find subsequences of functions converging to the solution of a limit equation in $\R^2$, which turns out to be the \emph{Liouville equation} $-\Delta u= e^u$. Moreover, the scaling invariance of the ``mass density" $he^{u_n}\dv$ implies that the limit function satisfies $\int_{\R^2} e^u<+\infty$. At this point, a classification result of Chen and Li \cite{chenli-Duke-1991-classification} gives the quantisation $\int_{\R^2}e^u =8\pi$. Moreover, it was proved in \cite{yyli-CMP-1999-harnacktype} (see also \cite{bartolucci-tarantello-2002-CMP} for a simpler proof relying upon Pohozaev identity) that multiple ``bubbles" (i.e. concentration profiles) cannot accumulate in the same point and that there is no additional mass outside that of the bubbles; also, $u_n\to-\infty$ uniformly away from the concentration points. All together, these facts show that $\lambda\in8\pi\N$ necessarily.

When dealing instead with Palais-Smale sequences, we face some additional difficulties. To begin, the low regularity of $(u_n)_n$ requires us to describe the concentration regions in an integral way. Also, $(u_n)_n$ \underline{does not solve any equation}, so, in order to show that the concentration profiles do carry a quantised amount of mass, we need to define and work with auxiliary sequences of more regular functions which are solutions of suitable PDEs and which are close to $u_n$ in $H^1$-norm. However, since the $L^2$-norm is not preserved by scaling, we must also pay attention and renormalise when scaling these auxiliary sequences in order to get in the limit a solution for the Liouville equation in $\R^2$.
After extracting a finite number of concentration profiles, we show that these ones do not carry additional mass in their ``necks": this is the most delicate point in the blow-up analysis and the Morse index property \eqref{morseindexproperty} comes here into play. Finally, we show that there is no mass everywhere else; differently from the case of (exact) solutions, here it seems that we cannot exclude the \emph{clustering} of bubbles, that is, multiple bubbles might accumulate at the same point in the limit (this can also occur with sequences of exact solutions in bounded domains if we have no control on the oscillations at the boundary, see \cite{chen-1999-CAG}; instead, it cannot happen on closed manifolds as the Green representation formula implies that we always have uniformly bounded oscillations away from blow-up points, see \cite{bartolucci-tarantello-2002-CMP}). As in \cite{lishafrir-1994}, we thus employ a concentration-compactness argument on clusters of bubbles to show that, even in this case, there is no additional mass. Finally, with a global estimate relying upon Green's function, we rule out the occurrence of mass away from concentration points: we thus obtain that $\lambda\in 8\pi\N$ necessarily, which gives the desired contradiction.

\bigskip

A more general class of problems is that of \emph{singular} Liouville type mean-field equations, that is, equations of the form
\begin{equation*}\label{singularintroliouville}
	-\dg \bar{u}= \lambda\left(\frac{\bar{h}e^{\bar{u}}}{\int_M \bar{h}e^{\bar{u}} \dv}-1\right)-4\pi\sum_{i=1}^m\alpha_i (\delta_{q_i}-1),
\end{equation*}
where $m\in \N$, $\alpha_i>-1$ $\forall i$ and $\delta_{q_i}$ denotes the Dirac mass centered at $q_i\in M$.

These equations are related to the problem of prescribing the Gaussian curvature where we allow the conformal metric to have \emph{conical singularities}, and to the theory of selfdual Chern-Simons vortices; we refer to the survey \cite{tarantello-2010-DCDS-liouvillesurvey} and again to \cite{tarantellobook} for further informations.

 If we let $u(x):=\bar{u}(x)+4\pi\sum_{i=1}^m\alpha_i G(x,q_i)$, where $G$ denotes the Green's function of Laplacian defined in \eqref{greenfunction}, then $u$ is a solution for
\begin{equation}\label{singularequation}
	-\dg u =\lambda\left(\frac{\tilde{h}e^u}{\int_M \tilde{h}e^u \dv}-1\right),
\end{equation}
where now $\tilde{h}(x):=\bar{h}(x)e^{-4\pi\sum_{i=1}^m\alpha_i G(x,q_i)}$ is a potential function with zeros in the points $q_i$ with $\alpha_i>0$ and poles in the points $q_i$ with $\alpha_i\in(-1,0)$.

Equation \eqref{singularequation} is the Euler-Lagrange equation of the corresponding integral functional \eqref{functional} with $h$ replaced by $\tilde{h}$. It turns out that the blow-up analysis of PS-sequences $(u_n)_n$ satisfying \eqref{morseindexproperty} in this case is more involved and the procedure employed in this paper only works in the particular case in which $\alpha_i\leq 1$ $\forall i=1,\dots,m$, see Theorem \ref{singulartheorem}. However, since the corresponding analysis for sequences of \emph{exact} solutions carried out in \cite{bartolucci-tarantello-2002-CMP}, \cite{bartolucci-montefusco-2007} only requires the assumption $\alpha_i>-1$, we expect that our result could be extended to cover this more general case as well.

\bigskip

Given the aforementioned results for granted, it is clear that the whole problem of solving \eqref{meanfield} or \eqref{singularequation} boils down to constructing a PS-sequence for the associated functional which verifies the Morse index property \eqref{morseindexproperty}. This is always possible whenever our functional posseses a topological structure which allows the use of a min/max scheme of some finite dimension:

\begin{thmA*}[\textup{\cite[Theorem 1.4]{fang-ghoussoub-CPAM-1994}}]
	Let $H$ be an Hilbert space and $I:H\to\R$ a functional of class $C^{2,\alpha}$. Let $D\subseteq \R^{N}$ be a compact subset, $B\subseteq D$, and define
	\begin{equation*}
		\Gamma:=\left\{\gamma\in C(D;H)\mid \gamma_{\mid B}=\gamma_0, \quad \text{where }\gamma_0\in C(B;H) \text{ is a given function}\right\}.
	\end{equation*}
	Let 
	\begin{equation}\label{minmaxproperty}
		c:=\inf_{\gamma\in\Gamma}\max_{x\in D}I(\gamma(x)), \quad \text{and assume} \quad c>c_0:=\max_{y\in B} I(\gamma_0(y)).
	\end{equation}
	There exist $(\gamma_n)_n\subseteq \Gamma$ and $(u_n)_n\subseteq H$ with $u_n=\gamma_n(x_n)$ for some $x_n\in D$, such that
	\begin{equation*}
		I(u_n)\longrightarrow c, \quad \quad \quad \norm{I'(u_n)}_{H^\ast}\longrightarrow0 \quad \text{as $n\to\infty$},
	\end{equation*}  
	and, $\forall n\in \N$,
	\begin{equation}\label{morseindexpropertygeneralfunctional}
		\sup\left\{\dim(E)\mid E \text{ subspace of $H$ s.t. } I^{''}(u_n)[w,w]<-\frac{1}{n}\norm{w}^2 \,\,\, \forall w\in E\right\}\leq N.
	\end{equation}
\end{thmA*}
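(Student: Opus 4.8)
The plan is to argue by contradiction, combining the classical min-max deformation scheme with a finite-dimensional transversality count that exploits the gap between $\dim D=N$ and the size of the negative eigenspace of $I''$ at high-index almost-critical points.

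First I would dispose of the soft part. If no Palais--Smale sequence existed at level $c$ at all, then there would be $\epsilon,\rho>0$ with $\norm{I'(u)}_{H^\ast}\ge\rho$ on the strip $I^{-1}([c-\epsilon,c+\epsilon])$; a standard pseudo-gradient flow would then deform $\{I\le c+\epsilon\}$ into $\{I\le c-\epsilon\}$ while fixing $\{I\le c-\epsilon\}$ pointwise. Since $c_0<c$ by \eqref{minmaxproperty}, this flow fixes $\gamma_0(B)$, so composing it with an almost-optimal $\gamma\in\Gamma$ would yield a competitor in $\Gamma$ with maximum below $c$, contradicting the definition of $c$. Hence a sequence $u_n=\gamma_n(x_n)$ with $x_n$ a maximum point of $I\circ\gamma_n$, $I(u_n)\to c$ and $\norm{I'(u_n)}_{H^\ast}\to0$ always exists; the genuine content of the theorem is the index bound \eqref{morseindexpropertygeneralfunctional}.

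For the index bound I would again argue by contradiction and assume that no such sequence satisfies \eqref{morseindexpropertygeneralfunctional}. Quantitatively, passing to the strip near $c$, this forces every almost-critical point $u$ there to carry a subspace $E_u\subseteq H$ with $\dim E_u\ge N+1$ on which $I''(u)$ is uniformly negative definite. The aim is to convert this surplus of negative directions into an extra descent. Exploiting the $C^{2,\alpha}$ regularity together with a Marino--Prodi type perturbation, I would first reduce to a functional whose critical points in the strip are non-degenerate, so that near each of them Morse theory describes $\{I\le c\}$, up to retraction, by attaching a local unstable cell whose dimension equals the Morse index and is therefore at least $N+1$; equivalently, the local stable manifold has codimension $\ge N+1$.

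The decisive step is then the dimensional obstruction. Taking $\gamma\in\Gamma$ with $\max_{D}I\circ\gamma\le c+\epsilon$, one must cross a compact set of such critical points near level $c$. Because their stable manifolds have codimension $\ge N+1$ while $\gamma(D)$ is the continuous image of the $N$-dimensional set $D$, a general-position / degree argument lets one homotope $\gamma$, rel $B$, so that it becomes transverse to — and hence, by the codimension count, misses — these stable manifolds, after which the negative-gradient flow pushes it below level $c$; the deformed map still lies in $\Gamma$ and has $\max_{D}I<c$, the desired contradiction. I expect the construction of this index-lowering deformation — patching the local flows along the high-dimensional unstable directions into a single global, $\Gamma$-admissible deformation, and making the transversality argument rigorous in the infinite-dimensional setting via a Fredholm or finite-dimensional reduction — to be the main obstacle. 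The delicate quantitative point is to reconcile the $\tfrac1n$-relaxation appearing in \eqref{morseindexpropertygeneralfunctional} with the uniform negativity on $E_u$ that the descent flow requires.
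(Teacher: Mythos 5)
You should first note that the paper contains no proof of Theorem~A: it is quoted verbatim from \cite[Theorem 1.4]{fang-ghoussoub-CPAM-1994} and used as a black box, so your attempt can only be compared with the original Fang--Ghoussoub argument. Your ``soft part'' (existence of some Palais--Smale sequence of the form $u_n=\gamma_n(x_n)$ at level $c$ via the quantitative deformation lemma and \eqref{minmaxproperty}) is correct and standard; the issues are all in the index bound.

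The central gap is that your argument presupposes objects that do not exist in this setting. The Palais--Smale condition is \emph{not} assumed --- indeed the theorem exists precisely to produce a PS sequence where compactness is unknown --- so there may be no critical points at all in the strip $I^{-1}([c-\epsilon,c+\epsilon])$, and the set one must ``get past'' is the set of \emph{almost}-critical points, which is in general neither compact nor a manifold. Hence the Marino--Prodi perturbation (which must be localized around a compact critical set), the Morse-theoretic cell attachments, and the stable/unstable manifolds of a negative-gradient flow are not available; and even if one perturbs $I$, transferring the level $c$ and, more seriously, the second-order statement \eqref{morseindexpropertygeneralfunctional} back to points of the \emph{unperturbed} functional lying on admissible maps in $\Gamma$ is not addressed. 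The rigorous mechanism in Fang--Ghoussoub is different in kind from your transversality step: one fixes $n$, negates the conclusion so that every almost-critical $u$ near level $c$ carries a subspace $E_u$ with $\dim E_u\ge N+1$ and $I''(u)[w,w]<-\tfrac1n\norm{w}^2$ on $E_u$ (this also resolves your worry about the $\tfrac1n$-relaxation: $n$ is frozen before the contradiction argument starts), uses the $C^{2,\alpha}$-continuity of $I''$ to make these subspaces locally constant on a cover of the almost-critical set, and then needs a \emph{continuous selection} of descent directions inside the $E_u$'s along $\gamma(D)$. The set of unit descent directions in $E_u$ is homotopy equivalent to $S^{N}$, and the primary obstruction to a continuous section lives in degree $N+1$, which exceeds the covering dimension $N$ of $D$; this is exactly where the hypothesis $\dim D=N$ against $\dim E_u\ge N+1$ is consumed. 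A partition-of-unity gluing of these local descents with the usual pseudo-gradient deformation (fixing $\gamma_0(B)$, since $c_0<c$) then yields a competitor in $\Gamma$ with maximum below $c$, the desired contradiction. Your ``general position / degree'' appeal is the heuristic shadow of this selection argument, but as written it is a plan rather than a proof --- as you yourself acknowledge in your closing sentences.
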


\bigskip

 In other words, whenever we are in condition to apply a min/max scheme of dimension $N$ to the functional $I$, we then obtain a Palais-Smale sequence with the above additional property on the Morse index. In particular, Theorem A applies to our functional $J_\lambda$ in \eqref{functional} when $\lambda\notin 8\pi\N$; we now provide a brief description of the variational setting and refer the reader to \cite{malchiodi2008topologicalmethods} for further details.  
  
 Let $\lambda\in (8\pi k, 8\pi(k+1))$, $1\leq k\in \N$, and define
 \begin{equation*}
 	M_k:=\left\{\sum_{i=1}^k t_i\delta_{q_i}\mid\sum_{i=1}^k t_i=1, \,\, t_i\geq 0 \,\, \forall i, \,\, q_i\in M, \,\, \text{$\delta_{q_i}$ Dirac mass centered at $q_i\in M$}\right\},
 \end{equation*}
 endowed with the weak topology of distributions. This is called \emph{formal space of $k$-barycenters on $M$}; it is a stratified space, that is, finite union of open manifolds (i.e. noncompact without boundary) of different dimensions, whose maximal one is $3k-1$. In particular, $M_k$ has the structure of a CW-complex of dimension $3k-1$, see  e.g. the appendix of \cite{battaglia-jevnikar-malchiodi-ruiz-Advances-2015}. We can now use $M_k$ to build our min-max scheme: let $\Sigma:=\frac{M_k\times[0,1]}{\sim}$ be the contractible cone over $M_k$ (i.e. $M_k\times\{1\}$ is collapsed into a single point). Being $\Sigma$ itself a CW-complex, we can embed it as a compact subset of $\R^N$ for a big enough $N\in \N$ through a map $\Psi:\Sigma\to\R^N$. We then take $D:=\Psi(\Sigma)$ and $B:=\Psi(M_k\times\{0\})$ as the sets $D$ and $B$ in the statement of Theorem A. It is now possible to choose a suitable map $\gamma_0\in C(B;H^1(M))$ such that \eqref{minmaxproperty} holds: we refer again to \cite{malchiodi2008topologicalmethods} for details. As a consequence, we can apply Theorem A to get the existence of a (PS)-sequence $(u_n)_n\subseteq H^1(M)$ for $J_\lambda$ satisfying \eqref{morseindexproperty}; moreover, we can freely assume that $(u_n)_n$ satisfies the normalization \eqref{normalization}.

Regarding instead singular Liouville equations, in many cases (which depend upon the genus of $M$, the value of $\lambda$ and the $\alpha_i$'s), we are still in condition to apply Theorem A to our functional, obtaining a Palais-Smale sequence $(u_n)_n$ with the second-order information \eqref{morseindexproperty}. For a precise description of the variational structure of the singular Liouville functional, we refer the reader to e.g. \cite{carlotto-malchiodi-2012-JFA} and \cite{bartolucci-demarchis-malchiodi-2011-IMRN}.

\subsection*{Liouville equations versus harmonic maps and some open questions}

In this last part of the introduction we want to briefly sketch a comparative picture between Liouville equations and harmonic maps in order to highlight the main differences and similarities between them and formulate some questions.

Given a closed Riemann surface $(M,g)$ and a target (closed) manifold $(N^n,h)$ of dimension $n\geq3$, which we assume to be isometrically embedded in $\R^k$ for some suitable $k\in\N$ (this is always possible due to Nash's Theorem), we define the \emph{Dirichlet energy}

\begin{equation*}
	E(f):=\frac{1}{2}\int_M\abs{\nabla f}^2\dv
\end{equation*}
for maps $f\in W^{1,2}(M;N)$.
Critical points (in a suitable sense) of $E$ are called \emph{harmonic maps} and they satisfy 
\begin{equation*}
	-\Delta f = A(f)(\nabla f, \nabla f),
\end{equation*}
where $A(f)$ denotes the second fundamental form of $N\subset\R^k$ computed along the image of $f$; see e.g. \cite{heleinbook-harmonicmaps} for precise statements and further details.

The study of existence of harmonic maps is a major topic in geometric analysis due to their link with minimal immersions: if an harmonic map is also a conformal immersion, then its image is a minimal immersed surface (\cite{eelles-sampson-1964-americanj}). The conformal invariance of the problem implies that we have to deal with the occurence of concentration phenomena when studying the behaviour of sequences of (approximate) solutions: indeed, the strategy of scaling back to isolate bubbles was employed for the first time ever by Sacks and Uhlenbeck in \cite{sacks-uhlenbeck-1981-Annals} for a ``perturbation" of $E$.

\medskip

As for Liouville equations, we have a complete picture of the blow-up behaviour of sequences of harmonic maps with uniformly bounded energy, see e.g. \cite[Theorem 2.2]{parker-1996-JDG}. Long story short, there exists a subsequence of harmonic maps ``bubble converging" to a limit harmonic map plus a finite number of bubbles, i.e. harmonic maps from $\sphere^2$ into $N$; moreover, we have no energy loss and all the necks have zero lenght in the limit. 

On the other hand, this result no longer holds when we consider instead \emph{Palais-Smale sequences} for the harmonic map energy $E$; this is due to the fact that, in this case, we may lose any positive value of energy in the necks, see \cite[Propositions 4.1 and 4.2]{parker-1996-JDG}. Hence $E$ \emph{does not} satisfy the Palais-Smale condition.

The natural counterpart of this fact for Liouville equations is the following question: 

\medskip

\noindent{\bf{Question:}} does the Palais-Smale property hold for $J_\lambda$ when $\lambda\notin 8\pi\N$?

\medskip

By Theorem \ref{maintheorem}, the answer is positive  for sequences which satisfy the additional condition \eqref{morseindexproperty}, but we were unable to remove \eqref{morseindexproperty} or, conversely, to provide an example of a noncompact (PS)-sequence at energy level $\lambda\notin 8\pi\N$, $\lambda>8\pi$: this seems to be a challenging problem.

We also mention that the Palais-Smale property \emph{does not} hold in general also for the Moser-Trudinger functional, see \cite{costa-tintarev-2014-JFA}. This failure for both harmonic maps and Moser-Trudinger functional seems to suggest that the same conclusion might also be true for the Liouville functional $J_\lambda$, but, differently from these two other problems, the Liouville nonlinearity $e^u$ is not the ``most critical" one in dimension $2$.

\medskip

Coming back to harmonic maps, the lack of a bubble convergence theorem for general Palais-Smale sequences suggests to ask the following: 

\medskip

\noindent{\bf Question:} is it possible to prove a bubble convergence theorem for Palais-Smale sequences for the harmonic map energy $E$ under an additional assumption in the spirit of \eqref{morseindexpropertygeneralfunctional}?

\medskip

A positive answer to this question would allow, at least in theory (we would also need a counterpart of Theorem A in this new setting), a \emph{direct} use of min/max methods on $E$ in order to prove existence of harmonic maps. However, one would first need to find a suitable definition of ``approximate" Morse index for Palais-Smale sequences in this setting. As of today, most of the techniques employed to prove existence for a general target $N$ rely either on a suitable approximation of the energy functional $E$ by functionals satisfying the (PS)-condition in the spirit of \cite{sacks-uhlenbeck-1981-Annals} or \cite{lamm-2006-CalcVar}, or on flow methods as in \cite{struwe-1985-CMH-harmonicmapsheatflow}. 

In the latter case, one gets from the flow the existence of a (PS)-sequence $(f_n)_n$ in $W^{2,2}(M;N)$ with \emph{tension fields} $\tau_n:=\Delta f_n +A(f_n)(\nabla f_n, \nabla f_n)$ uniformly bounded in $L^2(M;N)$, for which the bubble tree convergence holds, see e.g. \cite{qing-tian-1997-CPAM}.

In the former case instead, while the energy quantization was proved to hold in \cite{lamm-2010-transactions-energyidentity} under an additional and natural ``entropy-type" assumption, this is still not true in general and one might also have necks collapsing to (nontrivial) geodesics in the limit, see \cite{li-wang-2015-PacificJ-nonquantisedalpha}.

\bigskip

We now briefly describe the structure of the paper: after recalling some basic facts in Section \ref{sectionconccompoactness}, in Section \ref{sectionbubbleextr} we describe how to isolate concentration profiles and in Section \ref{sectionsimpleblowup} we show that there is no further mass along their ``necks". At this point, in Section \ref{sectionglobalquantization} we prove a global quantization result and complete the proof of Theorem \ref{maintheorem}. Lastly, Section \ref{sectionsingulareq} briefly explains how to extend the results of the other Sections to the singular equation \eqref{singularequation}.

 \begin{notation}
 	Through the rest of this paper, $(M,g)$ will be a closed surface with unitary Riemannian volume. For any two points $x,y\in M$, $d_g(x,y)$ will denote their distance. Given a function $f:M\to \R$, we will denote with $\bar{f}:=\frac{1}{Vol_g(M)}\int_M f\dv$ its average value over $M$. We will always denote by $B_r(x)$ the metric ball of center $x$ and radius $r$.
 	
 	We will use $C$ to denote various positive constants which do not depend upon the index $n\in \N$ of the considered sequences; these constants are allowed to vary from line to line. Sometimes we will use subscripts to emphasize the dependence of $C$ with respect to some specific parameters, e.g. $C_R$ if $C$ depends upon $R$. 
 \end{notation}

\section{Concentration-compactness}\label{sectionconccompoactness}

From now on, $(u_n)_n$ will denote a Palais-Smale sequence for $J_\lambda$ satisfying \eqref{normalization}. To begin, we define an auxiliary sequence $(v_n)_n$ associated to $(u_n)_n$ and prove that those two sequences are ``close" in $H^1$-norm. 

For any $n\in \N$, let $v_n$ be the solution for

\begin{equation}\label{vn}
	\begin{cases*}
		-\dg v_n=\lambda\big(he^{u_n}-1\big) \qquad \text{on $M$,} \\
		\bar{v}_n=\bar{u}_n.
	\end{cases*}	
\end{equation}
We begin to recall the following version of Moser-Trudinger's inequality on closed surfaces:
\begin{thm}[\cite{fontana-mosertrudinger-manifolds}]\label{moser-trudinger-theorem}
	Let $(M,g)$ be a closed surface; there exists a constant $C=C(M)$ such that
	\begin{equation}\label{moser-trudinger-inequality}
		\frac{1}{Vol_g(M)}\int_M e^{(v-\bar{v})}\dv\leq Ce^{\frac{1}{16\pi}\norm{\nabg v}_{L^2(M)}^2} \qquad \forall v\in H^1(M).
	\end{equation}
\end{thm}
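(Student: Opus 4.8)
The plan is to reduce the stated inequality, whose exponent is \emph{linear} in $v-\bar v$, to the sharp Moser--Trudinger inequality with \emph{quadratic} exponent, and it is precisely this reduction that produces the constant $\tfrac{1}{16\pi}$. Set $w:=v-\bar v$, so that $\bar w=0$ and $\norm{\nabg w}_{L^2(M)}=\norm{\nabg v}_{L^2(M)}$; we may assume $w\not\equiv 0$, the constant case being trivial. Starting from the elementary inequality $t\le \frac{\epsilon}{2}t^2+\frac{1}{2\epsilon}$ (valid for every $t\in\R$ and $\epsilon>0$) and optimizing with the choice $\epsilon=8\pi/\norm{\nabg w}_{L^2(M)}^2$, we obtain the pointwise bound
\begin{equation*}
	e^{w}\le \exp\!\left(\frac{\norm{\nabg w}_{L^2(M)}^2}{16\pi}\right)\exp\!\left(\frac{4\pi\, w^2}{\norm{\nabg w}_{L^2(M)}^2}\right).
\end{equation*}
Integrating over $M$ and pulling out the first factor, the theorem follows at once from the \emph{sharp} estimate
\begin{equation*}
	\frac{1}{Vol_g(M)}\int_M \exp\!\left(\frac{4\pi\, w^2}{\norm{\nabg w}_{L^2(M)}^2}\right)\dv\le C(M),
\end{equation*}
valid for every non-constant $w\in H^1(M)$ with $\bar w=0$.

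It remains to establish this quadratic inequality, and here I would follow the Green's function approach. Writing $G$ for the Green's function of $-\dg$ (with zero average), the representation formula gives, for $w$ with $\bar w=0$,
\begin{equation*}
	w(x)=\int_M \nabla_y G(x,y)\cdot\nabg w(y)\dv(y),
\end{equation*}
so that $w=T(\nabg w)$ for the integral operator $T$ with kernel $\nabla_y G(x,\cdot)$. On a closed surface $G(x,y)=-\tfrac{1}{2\pi}\log d_g(x,y)+\obig(1)$ near the diagonal, hence $\abs{\nabla_y G(x,y)}\le \tfrac{1}{2\pi}\, d_g(x,y)^{-1}+\obig(1)$, and the \emph{precise} coefficient $\tfrac{1}{2\pi}$ of this critical singularity is exactly what encodes the sharp constant $4\pi$.

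The heart of the matter is then an abstract Moser--Trudinger inequality for integral operators: if $Tf(x)=\int_M k(x,y)f(y)\dv(y)$ with a kernel satisfying $\abs{k(x,y)}\le \tfrac{1}{2\pi}d_g(x,y)^{-1}+\obig(1)$, then $\norm{f}_{L^2(M)}\le 1$ forces $\int_M \exp(4\pi\abs{Tf}^2)\dv\le C(M)$. This is proved by the rearrangement technique of Adams and O'Neil: one passes to decreasing rearrangements, uses the O'Neil inequality to dominate $(Tf)^{*}$ by a one-dimensional convolution of the rearranged kernel against $f^{*}$, and is thereby reduced to a classical calculus lemma of Moser on the half-line. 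Transferring back through $w=T(\nabg w)$, with $f:=\nabg w/\norm{\nabg w}_{L^2(M)}$, yields the desired quadratic inequality.

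The \textbf{main obstacle} is controlling the sharp constant $4\pi$ throughout this reduction: the volume comparability of small geodesic balls and, above all, the \emph{exact} coefficient of the logarithmic singularity of $G$ must be tracked precisely, since any loss there would degrade the constant $\tfrac{1}{16\pi}$ below the value that produces the threshold $8\pi$ governing the whole Liouville analysis. An alternative to the global Green's function argument is to localize via a partition of unity subordinate to a finite cover by geodesic balls and to invoke Moser's Euclidean inequality chart by chart; this route, however, makes the control of the sharp constant through the cut-off errors and the cross terms even more delicate, which is why I would favour the rearrangement approach above.
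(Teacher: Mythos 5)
Your proposal is correct, and since the paper does not prove this statement but simply cites it from Fontana's work, the relevant comparison is with that reference: your outline (Young's inequality $t\le\frac{\epsilon}{2}t^2+\frac{1}{2\epsilon}$ with $\epsilon=8\pi/\norm{\nabg w}_{L^2}^2$ to reduce to the sharp quadratic inequality with constant $4\pi$, then the Green's potential representation and the Adams--O'Neil rearrangement argument) is precisely the strategy of the cited proof. The only caveat is that the core analytic ingredient --- the exponential-integrability lemma for integral operators with critical kernel $\tfrac{1}{2\pi}d_g(x,y)^{-1}+\obig(1)$ --- is invoked rather than proved, but that is exactly the content of the cited literature, so this is an acceptable level of detail.
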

By virtue of \eqref{moser-trudinger-inequality}, we get  $u_n\in L^{q}(M)$ $\forall q\geq 1$, so, by standard elliptic estimates, we see that $v_n\in W^{2,q}(M) \,\, \forall q\geq 1$, see Chapter 5 of \cite{tarantellobook} for further details.

We now have the following:

\begin{lemma}\label{approxlemma}
	Let $(u_n)_n$, $(v_n)_n$ be as above. Then
	\begin{equation*}
 \norm{u_n- v_n}_{H^1(M)}\longrightarrow0 \qquad \text{as $n\to\infty$}.
	\end{equation*}
\end{lemma}

\begin{proof}
	Being $(u_n)_n$ a (PS)-sequence, one has
	\begin{equation*}
		o_n(1)\norm{\phi}_{H^1}=J_\lambda'(u_n)[\phi]=\int_M\nabg u_n\nabg \phi \dv+\lambda\int_M \phi \dv -\lambda\int_M he^{u_n}\phi\dv \qquad\forall \phi\in H^1(M).
	\end{equation*}
	At the same time, \eqref{vn} implies
	\begin{equation*}
		\int_M\nabg v_n \nabg \phi \dv =\lambda\int_M he^{u_n} \phi \dv -\lambda \int_M\phi\dv \qquad\forall \phi\in H^1(M).
	\end{equation*}
	From these formulae, one gets
	\[
	\int_M \nabg(u_n-v_n)\nabg\phi \dv =o_n(\norm{\phi}_{H^1}) \qquad\forall \phi\in H^1(M),
	\]
	which, together with $\bar{u}_n=\bar{v}_n$ and Poincaré inequality, allows us to conclude.
\end{proof}

Next we recall some fundamental facts about the Green's function of the Laplacian on closed manifolds wich will be heavily employed in the rest of this paper.

\begin{thm}[\cite{aubinbook}, Chapter 4]
	Given $(M,g)$ closed surface, there exists the Green's function $G$ of $-\dg$, that is, the distributional solution of
	\begin{equation}\label{greenfunction}
		\begin{cases*}
		-\dg G(x,\cdot)=\delta_x -\frac{1}{Vol_g(M)}  & \text{in $M$}, \\
		\int_M G(x,y)\dv(y)=0 &\text{$\forall x \in M$}.
	\end{cases*}
	\end{equation}
	$G$ is symmetric, it is smooth on $M\times M$$\backslash\{(x,x)\mid x\in M\}$ and satisfies
	\begin{equation}\label{greenfunctiondecay}
		\abs{G(x,y)-\frac{1}{2\pi}\log\frac{1}{d_g(x,y)}}\leq C, \qquad \abs{\nabg G(x,y)}\leq C \frac{1}{d_g(x,y)}  \qquad \forall x\not =y \in M,
\end{equation}	
where $C>0$ is a suitable positive constant. Moreover, if $-\dg u=f$, $f\in L^1(M)$ and $\int_M f=0$, the following representation formula holds:
\begin{equation}\label{greenrepresentationformula}
	u(x)=\bar{u}+\int_M f(y) G(x,y)	\dv(y) \quad \forall x\in M. 
\end{equation}
\end{thm}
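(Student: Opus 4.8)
The plan is to build $G$ by the classical parametrix (Levi) method and then to deduce symmetry, regularity, the asymptotics \eqref{greenfunctiondecay} and the representation formula \eqref{greenrepresentationformula} from the construction together with the self-adjointness of $-\dg$. For \emph{existence}, fix $\rho>0$ below the injectivity radius and a cutoff $\chi$ with $\chi\equiv1$ on $\{d_g\leq\rho\}$ and $\supp\chi\subseteq\{d_g\leq2\rho\}$, and set
\[
H(x,y):=\frac{1}{2\pi}\,\chi\big(d_g(x,y)\big)\log\frac{1}{d_g(x,y)}.
\]
Working in geodesic normal coordinates, where $-\dg$ differs from the Euclidean Laplacian only by lower order terms controlled by the metric, and using $-\Delta\log\frac{1}{\abs{z}}=2\pi\delta_0$, one checks that, in the sense of distributions,
\[
-\dg H(x,\cdot)=\delta_x+\Gamma(x,\cdot),
\]
with $\Gamma$ bounded on $M\times M$ (only curvature corrections and derivatives of $\chi$ survive). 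Pairing this identity with the constant $1$ and using $\dg 1=0$ gives $\int_M\Gamma(x,y)\dv(y)=-1$, so $f_0(x,\cdot):=\Gamma(x,\cdot)+\frac{1}{Vol_g(M)}$ has zero average; by the Fredholm alternative for $-\dg$, whose kernel is the constants, we may solve $-\dg\psi(x,\cdot)=-f_0(x,\cdot)$ with $\int_M\psi(x,y)\dv(y)=0$. Then $H+\psi$ satisfies the first line of \eqref{greenfunction}, and subtracting its $y$-average (a function of $x$ only, hence harmless for $\dg_y$) enforces the normalization; this defines $G$.

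For \emph{symmetry and regularity}, note that off the diagonal $G(x,\cdot)$ solves $-\dg G(x,\cdot)=-\frac{1}{Vol_g(M)}$, a smooth datum, so elliptic regularity gives smoothness of $G$ there, while joint smoothness follows from the explicit singular part $H$ plus Schauder estimates for $\psi$. Symmetry is a consequence of self-adjointness: for $x\neq z$ one excises small geodesic balls around $x$ and $z$, applies Green's second identity to $G(x,\cdot)$ and $G(z,\cdot)$ on the complement, and lets the radii tend to $0$; the interior term vanishes because both functions have zero $y$-average, while the boundary terms reproduce exactly $G(x,z)$ and $G(z,x)$, whence $G(x,z)=G(z,x)$.

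For the \emph{asymptotics and the representation formula}, near the diagonal one has $G=\frac{1}{2\pi}\log\frac{1}{d_g(x,y)}+R$ with $R$ bounded (indeed Hölder, by $W^{2,p}$-estimates on $\psi$), which gives the first bound in \eqref{greenfunctiondecay}; since $\nabg$ of the logarithmic term is $\obig\big(1/d_g(x,y)\big)$ and $\nabg R$ stays bounded, the gradient bound follows. Finally, given $-\dg u=f$ with $\int_M f=0$, set $w(x):=\bar u+\int_M f(y)G(x,y)\dv(y)$; differentiating under the integral and using \eqref{greenfunction},
\[
-\dg w(x)=\int_M f(y)\Big(\delta_x-\tfrac{1}{Vol_g(M)}\Big)\dv(y)=f(x),
\]
so $u-w$ is $\dg$-harmonic on the closed surface, hence constant, and since $\int_M G(x,y)\dv(x)=0$ forces $\bar w=\bar u$ we conclude $u\equiv w$. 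The only genuinely technical point is the existence step, namely expanding the metric in normal coordinates to show $\Gamma$ is bounded (and, if one wants a continuous rather than merely $L^p$ remainder, iterating the parametrix through a Neumann series as in \cite{aubinbook}); everything else is a formal consequence of \eqref{greenfunction} and of the self-adjointness of $-\dg$.
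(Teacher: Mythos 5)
Your argument is correct, and it is essentially the standard parametrix construction that the paper itself does not reproduce but simply delegates to the cited reference (\cite{aubinbook}, Chapter 4): the single-step parametrix $H$ with bounded remainder $\Gamma$ (which suffices in dimension $2$, since the metric corrections to $\log\frac{1}{d_g}$ in normal coordinates are $\obig(1)$), the Fredholm correction $\psi$, symmetry via Green's second identity, and the representation formula \eqref{greenrepresentationformula} from symmetry plus Weyl's lemma all match the textbook route. The only cosmetic caveats are that joint smoothness of $G$ off the diagonal deserves a word beyond separate regularity in $y$, and that for general $f\in L^1$ the pointwise formula \eqref{greenrepresentationformula} is really an almost-everywhere statement; neither affects how the theorem is used in the paper.
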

	
We now state the main result of this Section, which closely follows \cite[Proposition 3.1]{malchiodi-2006-CRELLE}.	
\begin{lemma}\label{minimalmass}
		Let $\zeta_n$ be a solution of $\,-\dg \zeta_n= f_n$ on $M$ with $f\in L^1(M)$, $\int_M \abs{f_n}\dv\leq C$ $\forall n\in\N$. Then, up to subsequences:
\begin{enumerate}
	\item either $\int_M e^{q(\zeta_n-\bar{\zeta}_n)}\dv\leq C$ for some $C>0$ and $q>1$; 
	\item or else there exist $x_1,\dots,x_l\in M$ such that, $\forall \, r>0$, $\forall i\in \{1,\dots,l\}$, there holds
	\begin{equation}\label{minmasseq}
		\liminf_{n\to\infty}\int_{B_r(x_i)}\abs{f_n}\dv\geq4\pi.
	\end{equation}
\end{enumerate}
\end{lemma}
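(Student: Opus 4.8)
The plan is to run a Brezis--Merle type concentration-compactness argument based on the Green representation formula \eqref{greenrepresentationformula}. First I would observe that, since $\zeta_n$ solves $-\dg\zeta_n=f_n$ on the closed surface $M$, integrating the equation forces $\int_M f_n\dv=0$; hence \eqref{greenrepresentationformula} applies and gives
\begin{equation*}
	\zeta_n(x)-\bar\zeta_n=\int_M G(x,y)f_n(y)\dv(y)\qquad\forall x\in M.
\end{equation*}
Next I would introduce the nonnegative bounded measures $\mu_n:=\abs{f_n}\dv$, which satisfy $\mu_n(M)\leq C$ by hypothesis, and extract (up to a subsequence) a weak-$\ast$ limit $\mu_n\rightharpoonup\mu$ in the sense of measures, with $\mu$ a nonnegative bounded measure on $M$.

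The dichotomy is then governed by the set of concentration points
\begin{equation*}
	S:=\left\{x\in M\mid \mu(\{x\})\geq4\pi\right\}.
\end{equation*}
Since every point of $S$ carries $\mu$-mass at least $4\pi$ while $\mu(M)<\infty$, the set $S$ is finite, say $S=\{x_1,\dots,x_l\}$. If $S\neq\emptyset$ I would land in alternative (2): for each $i$ and each $r>0$, the lower semicontinuity of mass on open sets under weak-$\ast$ convergence yields
\begin{equation*}
	\liminf_{n\to\infty}\int_{B_r(x_i)}\abs{f_n}\dv=\liminf_{n\to\infty}\mu_n(B_r(x_i))\geq\mu(B_r(x_i))\geq\mu(\{x_i\})\geq4\pi,
\end{equation*}
which is exactly \eqref{minmasseq}.

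It remains to treat the case $S=\emptyset$ and show that it forces alternative (1). Here $\mu(\{x\})<4\pi$ for every $x\in M$, so by continuity from above there is a radius $r_x>0$ with $\mu(\overline{B_{2r_x}(x)})<4\pi$; upper semicontinuity on compacta then gives $\limsup_n\mu_n(\overline{B_{2r_x}(x)})<4\pi$, so the local mass sits strictly below $4\pi$ for large $n$. Covering $M$ by finitely many balls $B_{r_{x_j}}(x_j)$, the heart of the matter is a local exponential estimate on each such ball: I would split the kernel in the representation formula into its singular part coming from the concentric ball $B_{2r_{x_j}}(x_j)$ and a uniformly bounded far part (using \eqref{greenfunctiondecay}), apply Jensen's inequality to the probability measure $\abs{f_n}\dv/\norm{f_n}_{L^1(B_{2r_{x_j}}(x_j))}$ tested against the logarithmic kernel, and finally invoke Fubini together with the two-dimensional bound $\int_B d_g(x,y)^{-\alpha}\dv(x)\leq C$ valid for $\alpha<2$. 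Since the local mass is $<4\pi$ for large $n$, one can choose an exponent $q_j>1$ with $q_j\,\norm{f_n}_{L^1(B_{2r_{x_j}}(x_j))}<4\pi$, which keeps $\alpha=\frac{q_j}{2\pi}\norm{f_n}_{L^1}<2$ and yields $\int_{B_{r_{x_j}}(x_j)}e^{q_j(\zeta_n-\bar\zeta_n)}\dv\leq C$ uniformly in $n$. Taking $q:=\min_j q_j>1$ and summing over the finite cover produces the global bound $\int_M e^{q(\zeta_n-\bar\zeta_n)}\dv\leq C$, i.e. alternative (1).

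I expect the main obstacle to be precisely this last local estimate: the constant $4\pi$ is borderline and arises from balancing the sharp logarithmic constant $\frac{1}{2\pi}$ in \eqref{greenfunctiondecay} against the two-dimensional integrability threshold $\alpha<2$, so the delicate point is to verify that a strict deficit of the local mass below $4\pi$ genuinely leaves room to select an exponent $q>1$ that works \emph{uniformly} in $n$ and across the finite cover.
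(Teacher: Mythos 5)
Your proposal is correct and follows essentially the same route as the paper: the heart of both arguments is the Green representation formula combined with Jensen's inequality, Fubini, and the estimate $\int_{B} d_g(x,y)^{-\alpha}\dv(x)<\infty$ for $\alpha<2$, which is exactly where the threshold $4\pi = 2\cdot 2\pi$ comes from. The only difference is cosmetic: you organize the dichotomy through a weak-$\ast$ limit $\mu$ of $\abs{f_n}\dv$ and its atoms of mass $\geq 4\pi$, whereas the paper simply negates alternative (2) pointwise and extracts a common subsequence over a finite cover; both reductions lead to the identical local exponential estimate, and your handling of the uniform choice of $q>1$ over the finite cover is sound.
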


\begin{proof}
	Assume that the second alternative does not hold, namely $\forall \, x\in M \quad \exists r_x>0$, $\delta_x>0$, such that, for $n$ large enough and up to subsequences, one has 
	\begin{equation}\label{minmasseqproof}
	\int_{B_{r_x}(x)}\abs{f_n}\dv\leq 4\pi-\delta_x.
	\end{equation}
	Being $M$ compact, we can cover it with a finite number $T$ of balls $B_j:=B_{\frac{1}{2}r_{x_j}}(x_j)$, $j=1,\dots,T$, and we can extract a subsequence such that, when $n$ large enough and $\delta:=\min_j \delta_{x_j}$,
	\[
	\int_{B_j}\abs{f_n}\dv\leq4\pi-\delta \qquad \forall j=1,\dots,T.
	\]
	Using the Green's representation formula \eqref{greenrepresentationformula}, write
	\begin{equation*}
		\zeta_n(x)=\bar{\zeta}_n +\int_M G(x,y)f_n(y)\dv(y).
	\end{equation*}
	Let $\tilde{B}_j:= B_{r_{x_j}}(x_j)$; one has
	\begin{equation*}
		\zeta_n(x)-\bar{\zeta}_n=\int_{M\backslash\tilde{B}_j} G(x,y) f_n(y)\dv(y)+\int_{\tilde{B}_j} G(x,y) f_n(y)\dv(y).
	\end{equation*}
	Pick $x\in B_j$; then $y\mapsto G(x,y)$ is bounded in $M\backslash\tilde{B}_j$ (the bound depends upon $M$ and $r_{x_j}$), so
	\begin{equation*}
		e^{(\zeta_n(x)-\bar{\zeta}_n)}\leq C \exp\left(\int_{\tilde{B}_j} G(x,y) f_n(y)\dv(y)\right).
	\end{equation*}
	We can now use Jensen's inequality and Fubini/Tonelli's Theorem to estimate
	\begin{equation*}
		\begin{split}
			\int_{B_j} e^{q(\zeta_n(x)-\bar{\zeta}_n)}\dv(x) &\leq C\int_{B_j}\exp\left(\int_{\tilde{B}_j} q\abs{G(x,y)}\abs{f_n(y)}\dv(y)\right)\dv(x)  \\
			&=C\int_{B_j}\exp\left(\int_{\tilde{B}_j}q\abs{G(x,y)}\normps{f_n}{1}{\tilde{B}_j}\frac{\abs{f_n(y)}}{\normps{f_n}{1}{\tilde{B}_j}}\dv(y)\right)\dv(x) \\
			&\leq C\int_{B_j}\left(\int_{\tilde{B}_j} e^{q\abs{G(x,y)}\normps{f_n}{1}{\tilde{B}_j}}\frac{\abs{f_n(y)}}{\normps{f_n}{1}{\tilde{B}_j}}\dv(y)\right)\dv(x)  \\
			&=C\int_{\tilde{B}_j}\left(\int_{B_j} e^{q\abs{G(x,y)}\normps{f_n}{1}{\tilde{B}_j}}\dv(x)\right)\frac{\abs{f_n(y)}}{\normps{f_n}{1}{\tilde{B}_j}}\dv(y) \\
			&\leq C\sup_{y\in M} \int_{B_j} e^{q\abs{G(x,y)}\normps{f_n}{1}{\tilde{B}_j}}\dv(x).
		\end{split}
	\end{equation*}
	
	Finally, recalling \eqref{greenfunctiondecay}, we get
	\begin{equation*}
		\int_{B_j} e^{q(\zeta_n(x)-\bar{\zeta}_n)}\dv(x)\leq C\int_M\left(\frac{1}{d_g(x,y)}\right)^{\frac{q\normps{f_n}{1}{\tilde{B}_j}}{2\pi}}\dv(x).
	\end{equation*}
	The last integral is finite  if and only if $ \frac{q\normps{f_n}{1}{\tilde{B}_j}}{2\pi}<2$, that is, if and only if $q\normps{f_n}{1}{\tilde{B}_j}<4\pi$, which is indeed true by \eqref{minmasseqproof} whenever $q>1$ is close enough to $1$.
	Hence $\int_{B_j}e^{q(\zeta_n-\bar{\zeta}_n)}\dv<+\infty$ and, being $B_1,\dots,B_T$ a finite cover for $M$, the first alternative holds.
\end{proof}

\begin{remark}\label{localminmasslemma}
	Using Green's representation formula in a ball $B_R$ and proceeding along the same lines of the above proof, it is possible to obtain the following localised version of Lemma \ref{minimalmass}, which will be heavily employed in the next Sections:
	let $\zeta_n\in W^{1,r}_0(B_R)$, $r\geq 1$, be a solution for $-\Delta \zeta_n=f_n$ in $B_R$, where $(f_n)_n$ is bounded in $L^1(B_R)$. Then, up to a subsequence:
	\begin{enumerate}
		\item either $\int_{B_{\frac{R}{2}}} e^{q\zeta_n}\,dx\leq C$ for some $C>0$ and $q>1$; 
		\item or else there exist $x_1,\dots,x_l\in \overline{B}_{\frac{R}{2}}$ such that, $\forall \, r>0$, $\forall i\in \{1,\dots,l\}$, there holds
		\begin{equation*}
			\liminf_{n\to\infty}\int_{B_r(x_i)}\abs{f_n}\dv\geq4\pi.
		\end{equation*}
	\end{enumerate}
\end{remark}

\bigskip
	
We now apply Lemma \ref{minimalmass} to $\zeta_n=v_n$ defined in \eqref{vn} with $f_n=\lambda\big(he^{u_n}-1\big)$. In this case:
\begin{enumerate}
	\item either $\int_M e^{q(v_n-\bar{v}_n)}\dv\leq C$;
	\item or else $\exists \,\,  x_1,\dots,x_l\in M$ such that $\liminf_{n\to\infty}\int_{B_r(x_i)}\lambda\abs{he^{u_n}-1}\dv\geq4\pi$ $\forall r>0$.
\end{enumerate}
From now on, assume $(u_n)_n$ to be unbounded in $H^1(M)$ (otherwise there is nothing to prove); we then want to rule out the first alternative above. By our assumption and \eqref{functional}, we easily see that $\bar{u}_n\to-\infty$, so, by virtue of \eqref{vn}, $\bar{v}_n\to-\infty$ as well as $n\to+\infty$, so that the first alternative implies
\begin{equation*}
	\int_M e^{pv_n}\dv=o_n(1) \qquad \forall p\in [1,q].
\end{equation*}
Thus, given any $p\in (1,q)$, we can estimate
\begin{equation}\label{exponentialestimate}
	\begin{split}
		\int_M\abs{e^{pv_n}-e^{pu_n}}\dv&=\int_M\abs{\int_0^1p(u_n-v_n)e^{p(tu_n+(1-t)v_n)}\,dt}\dv \\
		&\leq \int_M p\abs{u_n-v_n}e^{pv_n}e^{p\abs{u_n-v_n}}\dv \\
		&\leq p\normps{u_n-v_n}{a}{M}\left(\int_M e^{qv_n}\dv\right)^{\frac{p}{q}}\left(\int_M e^{bp\abs{u_n-v_n}}\dv\right)^{\frac{1}{b}} \\
		&\leq p\normps{u_n-v_n}{a}{M}\left(\int_M e^{qv_n}\dv\right)^{\frac{p}{q}}\left(Ce^{\frac{b^2p^2}{16\pi}\norm{\nabg\abs{u_n-v_n}}^2_{L^2(M)}}\right)^\frac{1}{b},
	\end{split}
\end{equation}
where we used Lagrange's theorem, H\"{o}lder's inequality with $\frac{p}{q}+\frac{1}{a}+\frac{1}{b}=1$, $a,b>1$, and Moser-Trudinger's inequality \eqref{moser-trudinger-inequality}. In particular, when applying \eqref{moser-trudinger-inequality} we implicitly use the fact that $\int_M\abs{u_n-v_n}\dv\leq\norm{u_n-v_n}_{H^1(M)}=o_n(1)$ (from Lemma \ref{approxlemma}) in order to deal with the mean value.
Thus, using Lemma \ref{approxlemma} together with Sobolev's embedding Theorem, it follows that
\begin{equation*}
	\int_M \abs{e^{pv_n}-e^{pu_n}}\dv=o_n(1).
\end{equation*}
Using this formula, \eqref{normalization} and Jensen's inequality we get
\begin{equation*}
	\frac{1}{\normps{h}{\infty}{M}^p}\leq\left(\int_Me^{u_n}\dv\right)^p\leq\int_M e^{pu_n}\dv=\int_M e^{pv_n}\dv+o_n(1)=o_n(1),
\end{equation*}
which is a contradiction.

In other words, the second alternative of Lemma \ref{minimalmass} must hold whenever we consider an $H^1$-unbounded sequence $(u_n)_n$ as above; in particular, we must have concentration of mass at some point of $M$.

	\section{Isolation of concentration profiles}\label{sectionbubbleextr}
	
From now on, $(u_n)_n$ will be an \emph{unbounded} Palais-Smale sequence of $J_\lambda$ satisfying \eqref{normalization}. We want to isolate a blow-up profile of $(u_n)_n$.
	 Take $\rho\in(0,4\pi)$; since \eqref{minmasseq} holds for $f_n=\lambda(he^{u_n}-1)$ (see the end of previous Section), there exist points $(x_n)_n$ and radii $(r_n)_n$ satisfying
	
	\begin{equation}\label{rhodef}
		\int_{B_{r_n}(x_n)}\lambda(he^{u_n}-1)\dv =\sup_{x\in M} \int_{B_{r_n}(x)}\lambda(he^{u_n}-1)\dv=\rho.
	\end{equation} 
	Clearly $r_n\to0$ as $n\to\infty$. Without loss of generality, we can assume that $x_n\to x\in M$ as $n\to\infty$. We can also assume that there exists $(\tilde{r}_n)_n$ satisfying
	
	\begin{equation}\label{scalingassumptions}
		\frac{r_n}{\tilde{r}_n}\xrightarrow{n\to\infty}0,\qquad \tilde{r}_n\xrightarrow{n\to\infty}0 \quad \text{and}\quad \int_{B_{r_n}(y)}\lambda(he^{u_n}-1)\dv<4\pi \quad \forall y\in B_{\tilde{r}_n}(x_n).
	\end{equation}
	
	We now localise our problem: for any $n\in \N$, pick an isothermal local chart $(U_n,\psi_n)$ centered at $x_n$ and such that $\psi_n(U_n)=B_\delta(0)$ for some (small and uniform) $\delta>0$. Let $\phi_n$ be the conformal factor, that is, $ds_g^2=e^{\phi_n}dx^2$, and define $\eta_n$ to be the solution of
	
	\begin{equation}\label{etadef}
		\begin{cases*}
			\Delta \eta_n= e^{\phi_n} & \text{in $B_\delta$,} \\
			\eta_n=0                         &  \text{in $\partial B_\delta$}.
		\end{cases*}
	\end{equation} 
Define
\begin{equation}\label{localisedfunctions}
	\begin{cases*}
		\tilde{v}_n:=v_n\circ\psi_n^{-1}-\lambda \eta_n, \\
		\tilde{V}_n:=\lambda(h\circ\psi_n^{-1})e^{\phi_n+\lambda \eta_n}, \\
		\tilde{u}_n= u_n\circ\psi_n^{-1}-\lambda \eta_n.
	\end{cases*}
\end{equation}
Then one easily checks that
\begin{equation*}
	\begin{cases*}
		-\Delta \tilde{v}_n=\tilde{V}_ne^{\tilde{u}_n} & \text{in $B_\delta(0)$,} \\
		\int_{\psi_n(B_{r_n}(x_n))}\tilde{V}_n e^{\tilde{u}_n} \,dx=\rho + o_n(1),
	\end{cases*}
\end{equation*}
where the second formula follows from \eqref{rhodef} and a change of variables.

We now proceed to rescale our sequences: let
\begin{equation}\label{localisedscaledfunctions}
	\begin{cases*}
		\hat{v}_n(x):=\tilde{v}_n(r_nx)+2\log r_n & \text{for $x\in B_{\frac{\delta}{r_n}}(0)$}, \\
		\hat{u}_n(x):=\tilde{u}_n(r_nx)+2\log r_n,
	\end{cases*}
\end{equation}
so that
\begin{equation}\label{scaledequation}
	\begin{cases*}
		-\Delta \hat{v}_n=\tilde{V}_n(r_nx)e^{\hat{u}_n(x)}=: \hat{V}_n(x)e^{\hat{u}_n(x)} & \text{in $B_\frac{\delta}{r_n}(0)$,} \\
		\int_{\frac{1}{r_n}\psi_n(B_{r_n}(x_n))}\hat{V}_ne^{\hat{u}_n}\,dx=\rho +o_n(1),
	\end{cases*}
\end{equation}
	where again the second equality follows from a change of variables. We also notice that 
	\begin{equation}\label{starpap}
	\frac{1}{r_n}\psi_n(B_{r_n}(x_n))\longrightarrow B_1(0)\subseteq\R^2 \quad \text{as $n\to\infty$},
	\end{equation}
	 in the sense of $L^1$-convergence of indicator functions.
	
	We can now state the main result of this Section:
	
	\begin{prop}\label{bubbleextractionthm}
		Let $(u_n)_n$, $(v_n)_n$, $(\hat{u}_n)_n$, $(\hat{v}_n)_n$, $(r_n)_n$, $(\tilde{r}_n)_n$ be defined as above. Then there exists a sequence $(b_n)_n$ of real numbers, a positive costant $\mu>0$, $x_0\in \R^2$ and $\alpha\in (0,1)$ such that, up to a subsequece, $\hat{v}_n+b_n\longrightarrow\hat{w}$ in $C^{\alpha}_{loc}(\R^2)$ as $n\to\infty$, where $\hat{w}$ is of the form
		\begin{equation}\label{bubbleequation}
			\hat{w}(x)=\log\frac{\tau}{\left(1+\frac{\tau}{8}\abs{x-x_0}^2\right)^2}-\log \mu,
	\end{equation}	 
	for some $\tau>0$; in particular, $\hat{w}$ is a solution of
	\begin{equation}\label{liouvilleeq}
		\begin{cases*}
			-\Delta w=\mu e^w & \text{in $\R^2$,} \\
			\int_{\R^2} \mu e^w \,dx<+\infty.
		\end{cases*}	
	\end{equation}	
		Moreover, if $R_n\to\infty$ sufficiently slowly, then 
		\begin{equation}\label{bubblemass}
			\int_{B_{R_nr_n}(x_n)}\lambda(he^{u_n}-1)\dv\to 8\pi \quad \text{as $n\to\infty$.}
		\end{equation}
	\end{prop}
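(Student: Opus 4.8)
The plan is to exploit the fact that, although $(u_n)_n$ solves no equation, the auxiliary functions $\hat v_n$ do: by \eqref{scaledequation} one has $-\Delta\hat v_n=\hat f_n$ on $B_{\delta/r_n}(0)$ with $\hat f_n:=\hat V_ne^{\hat u_n}\ge 0$, and I would first collect the structural inputs needed to run a Brezis--Merle/Li--Shafrir type extraction (in the spirit of \cite{lishafrir-1994}, \cite{malchiodi-2006-CRELLE}). Rescaling the local mass bound \eqref{scalingassumptions}, and discarding the term $\lambda\cdot 1$ (whose contribution over $B_{Rr_n}(\cdot)$ is $O((Rr_n)^2)=o_n(1)$), shows that $\hat f_n$ is bounded in $L^1(B_R)$ for every fixed $R$ and that every unit ball centered in a region exhausting $\R^2$ carries $\hat f_n$-mass $<4\pi$; moreover $\hat V_n(x)=\tilde V_n(r_nx)\to\mu$ locally uniformly for a constant $\mu>0$, by continuity of $h$, of the conformal factor and of $\eta_n$ together with $x_n\to x$ and $r_n\to0$. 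On the other hand, since the Dirichlet energy is conformally invariant in dimension two, Lemma \ref{approxlemma} yields $\norm{\nabla(\hat u_n-\hat v_n)}_{L^2(B_R)}\to0$; hence, setting $c_n:=\frac{1}{\abs{B_R}}\int_{B_R}(\hat u_n-\hat v_n)\,dx$, Poincaré gives $\hat u_n-\hat v_n-c_n\to0$ in $H^1_{loc}$ and, by Moser--Trudinger \eqref{moser-trudinger-inequality}, $e^{\hat u_n-\hat v_n-c_n}$ stays bounded in every $L^q_{loc}$ while tending to $1$ in measure. This is exactly the renormalisation alluded to in the introduction, and it lets me rewrite the equation as $-\Delta\hat v_n=\big(\hat V_ne^{c_n}\big)e^{\hat v_n}\,(1+o_n(1))$, with a potential factor controlled only in $L^q$.

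Next I would produce the shift $b_n$ and the local bounds. Choosing $b_n$ by a normalisation tied to the concentration (for instance so that $\sup_{\overline{B_1}}(\hat v_n+b_n)=0$, which forces $b_n$ to track $c_n$ up to a bounded error), I would use the local minimal-mass result (Remark \ref{localminmasslemma}) together with the unit-mass bound $<4\pi$ to obtain $e^{q\hat v_n}\in L^1_{loc}$ for some $q>1$, and then a Harnack inequality for the nonnegative-right-hand-side equation $-\Delta\hat v_n=\hat f_n$ to upgrade this to a local $L^\infty$ bound for $\hat v_n+b_n$. Once $\hat v_n+b_n$ is locally bounded, the first paragraph shows that $\hat f_n$ is in fact bounded in $L^q_{loc}$ for some $q>1$, so elliptic $W^{2,q}$-estimates and Sobolev embedding give a uniform $C^\alpha_{loc}$ bound and, along a subsequence, $\hat v_n+b_n\to\hat w$ in $C^\alpha_{loc}(\R^2)$.

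Passing to the limit in the equation, the factor $\hat V_ne^{c_n-b_n}e^{(\hat u_n-\hat v_n-c_n)}$ converges (up to a subsequence) to a positive constant which I rename $\mu$, so that $\hat w$ solves $-\Delta\hat w=\mu e^{\hat w}$ on $\R^2$; Fatou together with the $L^1$ mass bound gives $\int_{\R^2}\mu e^{\hat w}<+\infty$, that is \eqref{liouvilleeq}. The classification theorem of Chen and Li \cite{chenli-Duke-1991-classification} then forces $\hat w$ to be of the form \eqref{bubbleequation} for some $\tau>0$ and $x_0\in\R^2$, and yields the quantisation $\int_{\R^2}\mu e^{\hat w}\,dx=8\pi$. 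For \eqref{bubblemass}, the $C^\alpha_{loc}$ convergence and the $L^q_{loc}$ control of the renormalising factor give $\hat f_n\to\mu e^{\hat w}$ in $L^1_{loc}$, so $\int_{B_R(0)}\hat f_n\,dx\to\int_{B_R(0)}\mu e^{\hat w}\,dx$ for each fixed $R$, the latter increasing to $8\pi$ as $R\to\infty$. A diagonal argument then produces $R_n\to\infty$ (slowly enough that also $R_nr_n\to0$ and $B_{R_n}(0)\subset B_{\delta/r_n}(0)$) with $\int_{B_{R_n}(0)}\hat f_n\,dx\to8\pi$; undoing the localisation and scaling and again absorbing the $O((R_nr_n)^2)$ contribution of the term $\lambda\cdot1$, this is precisely $\int_{B_{R_nr_n}(x_n)}\lambda(he^{u_n}-1)\dv\to8\pi$.

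I expect the main obstacle to be the local $L^\infty$ bound of the second paragraph. Because $\hat u_n$ solves no equation, the standard Harnack/Brezis--Merle machinery for $-\Delta v=Ve^{v}$ with $V$ pinched between positive constants does not apply directly: the relevant potential $\hat V_ne^{\hat u_n-\hat v_n}$ is controlled only in $L^q$, not pointwise. Running the a priori estimates on the equation genuinely satisfied by $\hat v_n$ — using only the $L^1$ mass of $\hat f_n$ and the unit-mass bound $<4\pi$ from \eqref{scalingassumptions} — while keeping the shift $b_n$ synchronised with $c_n$, is the delicate heart of the argument.
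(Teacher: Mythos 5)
Your overall strategy coincides with the paper's: exploit the auxiliary PDE solved by $\hat v_n$, use the localized minimal-mass lemma to get $e^{q\hat v_n}$ locally integrable for some $q>1$, transfer this to $e^{\hat u_n}$ via the $H^1$-closeness and Moser--Trudinger, run elliptic estimates, and conclude with the Chen--Li classification and a diagonal choice of $R_n$. However, the two steps you treat as routine are exactly where the work lies, and as written they contain genuine gaps. First, Remark \ref{localminmasslemma} is stated for functions in $W^{1,r}_0(B_R)$, and $\hat v_n$ restricted to $B_R$ has uncontrolled boundary values: its harmonic part on $B_R$ could a priori be large, so the remark cannot be invoked directly. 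The paper's fix is to pass to the compactly supported truncation $\hat\xi_n=\psi_R\hat w_n+(1-\psi_R)a_n-a_n$, which solves $-\Delta\hat\xi_n=\psi_R\hat V_ne^{\hat u_n}-\hat f_n$ with commutator terms involving $\nabla\psi_R\nabla\hat v_n$ and $\Delta\psi_R(\hat w_n-a_n)$; these are shown to have vanishing local $L^1$-mass on small balls by means of the Green-function gradient estimate of Lemma \ref{gradient estimate} (an $L^p$ bound, $p<2$, on $\nabla v_n$ over small balls that scales correctly). This is an essential ingredient, not a formality, and it is absent from your plan.

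Second, your normalization $\sup_{\overline{B_1}}(\hat v_n+b_n)=0$ together with the assertion that it ``forces $b_n$ to track $c_n$ up to a bounded error'' is precisely the two-sided bound that must be proved, and you offer no argument for it. The paper instead matches averages over $B_{2R}$, obtains the upper bound on $a_n$ from Jensen and the scale-invariant mass, and obtains the lower bound from $Ce^{a_n}\ge\int_{B_R}e^{\hat w_n}=\int_{B_R}e^{\hat u_n}+o_n(1)\ge\rho/2$: it is the captured mass $\rho$ that prevents the normalized sequence from sinking to $-\infty$. With your sup-normalization you would still need an argument of this type to exclude a thin spike attaining the sup while the bulk of $B_1$ drifts to $-\infty$, in which case the limit would not capture the mass. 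Relatedly, the ``Harnack inequality'' you invoke to pass from $e^{q\hat v_n}\in L^1_{loc}$ to a local $L^\infty$ bound is not the right tool: $\hat v_n$ is superharmonic, so Harnack-type inequalities control it from \emph{below}, not above, and the effective potential is only in $L^q$. The paper avoids this entirely: once \eqref{quadr} and \eqref{quadrato} give an $L^p(B_R)$ bound on $\hat V_ne^{\hat u_n}$ for some $p>1$, the $W^{2,p}$ estimates and the two-dimensional embedding $W^{2,p}\hookrightarrow C^{0,\alpha}$ yield the local H\"older bounds directly, so the $L^\infty$ detour is unnecessary. Once these two points are repaired, the remainder of your outline (limit equation, Fatou, Chen--Li, diagonal argument for $R_n$) matches the paper's proof.
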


	 For the proof we will need the following lemma:
	 
	 \begin{lemma}\label{gradient estimate}
	 	Consider $(u_n)_n$, $(v_n)_n$ as above, and suppose $1\leq p<2$. There exists a constant $C=C(p,M,g)$ such that, for $r>0$ sufficiently small and $\forall q\in M$, there holds
	 	\begin{equation*}
	 		\int_{B_r(q)}\abs{\nabg v_n}^p\dv\leq Cr^{2-p}.
	 	\end{equation*}
	 \end{lemma}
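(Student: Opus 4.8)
The plan is to bound $\nabg v_n$ pointwise through the Green representation formula and then integrate, exploiting that the right-hand side of \eqref{vn} is uniformly bounded in $L^1$. First I would observe that $f_n:=\lambda(he^{u_n}-1)$ has zero average and, by the normalization \eqref{normalization} together with $Vol_g(M)=1$, satisfies $\normps{f_n}{1}{M}=\lambda\int_M\abs{he^{u_n}-1}\dv\leq\lambda\big(\int_M he^{u_n}\dv+1\big)=2\lambda$ for every $n$. Since $\int_M f_n\dv=0$, I may apply \eqref{greenrepresentationformula} to $v_n$ and differentiate in $x$; using the gradient decay in \eqref{greenfunctiondecay} this yields the pointwise bound
\[
\abs{\nabg v_n(x)}\leq C\int_M\frac{\abs{f_n(y)}}{d_g(x,y)}\dv(y)\qquad\text{for a.e. }x\in M.
\]
Differentiation under the integral sign is legitimate here because $\abs{\nabg^x G(x,y)}$ is controlled by $d_g(x,y)^{-1}$, which is locally integrable in dimension two (so no principal value is needed), and $f_n\in L^\infty(M)$ by the Moser--Trudinger inequality.

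The next, and essentially the only substantive, step is the purely kernel-theoretic estimate: for $1\leq p<2$ and $r>0$ small,
\[
\sup_{y\in M}\int_{B_r(q)}\frac{1}{d_g(x,y)^p}\dv(x)\leq C_p\, r^{2-p}.
\]
I would prove this by a near/far dichotomy. If $d_g(y,q)\leq 2r$, then $B_r(q)\subseteq B_{3r}(y)$ and the integral is bounded by $C\int_0^{3r}s^{1-p}\,ds=C_p r^{2-p}$ in polar coordinates. If instead $d_g(y,q)>2r$, then $d_g(x,y)\geq d_g(y,q)/2>r$ for every $x\in B_r(q)$, so the integrand is at most $Cr^{-p}$ and the integral is bounded by $Cr^{-p}\abs{B_r(q)}\leq C r^{2-p}$. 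With this uniform bound in hand, Minkowski's integral inequality applied to the pointwise estimate gives
\[
\normps{\nabg v_n}{p}{B_r(q)}\leq C\int_M\abs{f_n(y)}\left(\int_{B_r(q)}\frac{1}{d_g(x,y)^p}\dv(x)\right)^{1/p}\dv(y)\leq C\,r^{(2-p)/p}\normps{f_n}{1}{M}.
\]
Raising to the $p$-th power and inserting $\normps{f_n}{1}{M}\leq 2\lambda$ yields $\int_{B_r(q)}\abs{\nabg v_n}^p\dv\leq C r^{2-p}$ with $C=C(p,M,g)$, which is the claim.

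The main obstacle is mild and is exactly the uniformity in $y\in M$ of the kernel estimate, handled by the two cases above; everything else (the $L^1$ bound, the differentiated representation formula) is routine. I note that an equivalent route would be to observe that the operator with kernel $d_g(x,y)^{-1}$ maps $L^1(M)$ into the weak space $L^{2,\infty}(M)$, so that $\nabg v_n\in L^{2,\infty}(M)$ with norm controlled by $\normps{f_n}{1}{M}$; the stated bound would then follow from Cavalieri's formula by splitting the layer-cake integral over $B_r(q)$ at the threshold $t\sim 1/r$ (using the trivial volume bound for $t\leq 1/r$ and the weak-$L^2$ bound for $t>1/r$) and optimizing. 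I prefer the Minkowski argument, as it avoids invoking weak-type theory and keeps the constants transparent.
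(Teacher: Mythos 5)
Your argument is correct and follows essentially the same route as the paper: both proofs start from the Green representation formula and the decay estimate \eqref{greenfunctiondecay} to get the pointwise bound $\abs{\nabg v_n(x)}\leq C\int_M\abs{f_n(y)}d_g(x,y)^{-1}\dv(y)$, and both reduce the claim to the uniform kernel estimate $\sup_{y\in M}\int_{B_r(q)}d_g(x,y)^{-p}\dv(x)\leq C_p r^{2-p}$ (which you prove via the near/far dichotomy and the paper simply asserts); your use of Minkowski's integral inequality in place of the paper's Jensen-plus-Fubini bookkeeping is an equivalent way to distribute the $L^1$ norm of $f_n$. One marginal inaccuracy: Moser--Trudinger gives $e^{u_n}\in L^q(M)$ for every finite $q$, not $f_n\in L^\infty(M)$, but $f_n\in L^q$ for some $q>2$ already pairs with the local integrability of $d_g(x,y)^{-1}$ to justify differentiating under the integral sign, so nothing is lost.
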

	 
	 \begin{proof}
	 	Write $f_n:=\lambda(he^{u_n}-1)$; then $(f_n)_n$ is uniformly bounded in $L^1(M)$. By Green's representation formula \eqref{greenrepresentationformula} and decay estimates \eqref{greenfunctiondecay}, we get
	 	\begin{equation*}
	 			\abs{\nabg v_n(x)}\leq C\int_M\frac{\abs{f_n(y)}}{d_g(x,y)}\dv(y)=C\int_M \frac{\normps{f_n}{1}{M}}{d_g(x,y)}\frac{\abs{f_n(y)}}{\normps{f_n}{1}{M}}\dv(y), 
	 	\end{equation*}
	 	and, by virtue of Jensen's inequality, 
	 	\begin{equation*}
	 		\begin{split}
	 			\int_{B_s(q)}\abs{\nabg v_n(x)}^p\dv(x)&\leq C\int_{B_s(q)}\left(\int_M \frac{\normps{f_n}{1}{M}^{p-1}\abs{f_n(y)}}{d_g(x,y)^p}\dv(y)\right)\dv(x) \\
	 			&=C\normps{f_n}{1}{M}^{p-1}\int_M \left(\int_{B_s(q)}\frac{1}{d_g(x,y)^p}\dv(x)\right)\abs{f_n(y)}\dv(y) \\
	 			&\leq C\sup_{y\in M}\int_{B_s(q)}\frac{1}{d_g(x,y)^p}\dv(x)\leq C(p,M,g)s^{2-p}.
	 		\end{split}
	 	\end{equation*}
	 Here, being $M$ closed, we can take a constant $C(p,M,g)$ which does not depend upon $q$ whenever the radius $s$ is small enough.
	 \end{proof}
	 
	 \begin{proof}[Proof of Proposition \ref{bubbleextractionthm}]
	 	Let $R>1$ be fixed and, for $n$ large enough, define $b_n$ in such a way that
	 	\begin{equation}\label{w_nhat}
	 		\hat{w}_n:=\hat{v}_n+b_n \quad \text{satisfies} \quad a_n:=\frac{1}{\abs{B_{2R}}}\int_{B_{2R}}\hat{w}_n\,dx= \frac{1}{\abs{B_{2R}}}\int_{B_{2R}}\hat{u}_n\,dx \quad \forall n\in \N.
	 \end{equation}	
	 By construction,
	 \begin{equation*}
	 	-\Delta \hat{w}_n=\hat{V}_ne^{\hat{u}_n} \qquad \text{in $B_{2R}(0)$.}
	 \end{equation*}
	 Moreover, from \eqref{w_nhat} and the scaling invariance of the mass, 
	 \begin{equation*}
	 	\frac{1}{\abs{B_{2R}}}\int_{B_{2R}}\hat{w}_n\leq\frac{1}{\abs{B_{2R}}}\int_{B_{2R}}e^{\hat{u}_n}\leq C \quad \forall n\in \N,
	 \end{equation*}
	 that is, $a_n\leq C$.
	 We also notice that
	 \begin{equation}\label{scaledapproximation}
	 	\norm{\hat{w}_n-\hat{u}_n}_{H^1(B_{2R})}=o_n(1),
	 \end{equation}
	 which is a consequence of Poincaré inequality, a scaling argument and Lemma \ref{approxlemma}. 
	 
	 	At this point, choose a smooth radial cutoff function $\psi_R$ such that $\psi_R\equiv 1$ on $B_R$, $\psi_R\equiv 0$ on $\R^2\backslash B_{2R}$ and $0\leq\psi_R\leq 1$. Define
	 	\begin{equation}\label{xi_ndef}
	 		\xi_n:=\psi_R\hat{w}_n+(1-\psi_R)a_n, \qquad
	 		\hat{\xi}_n:=\xi_n-a_n.
	 	\end{equation}
	 	In particular, we notice that $\hat{\xi}_n$ is compactly supported in $B_{2R}$. One has
	 	\begin{equation*}
	 			\begin{split}
	 		-\Delta \hat{\xi}_n&=-\Delta\psi_R \hat{w}_n+\psi_R(-\Delta\hat{w}_n)-2\nabla\psi_R\nabla\hat{w}_n+a_n\Delta\psi_R \\
	 		&=\psi_R\hat{V}_ne^{\hat{u}_n}-2\nabla\psi_R\nabla\hat{w}_n-\Delta\psi_R\hat{w}_n+a_n\Delta\psi_R \\
	 		&=:\psi_R\hat{V}_ne^{\hat{u}_n}-\hat{f}_n.
	 \end{split}	
	\end{equation*}
	
For any $p\in[1,2)$, using the definition of $\hat{w}_n$ we see that
\begin{equation*}
	\begin{split}
		\int_{B_{2R}}\abs{\hat{f}_n}^p\,dx&\leq C_p\left[\int_{B_{2R}}\abs{\nabla\psi_R\nabla\hat{v}_n}^p\,dx+\int_{B_{2R}}\abs{\Delta\psi_R}^p\abs{\hat{w}_n-a_n}^p\,dx\right] \\
		&\leq C_{p,R}\left[\int_{B_{2R}}\abs{\nabla\hat{v}_n}^p\,dx+\int_{B_{2R}}\abs{\hat{w}_n-a_n}^p\,dx\right] \\
		&\leq \tilde{C}_{p,R}\int_{B_{2R}}\abs{\nabla\hat{v}_n}^p\,dx,
	\end{split}
\end{equation*}
where the last estimate follows from Poincaré inequality. Finally, using Lemma \ref{gradient estimate} with $r=2r_nR$ together with a scaling argument, one has
\begin{equation*}
	\int_{B_{2R}}\abs{\hat{f}_n}^p\,dx\leq C=C(p,R) \qquad \text{uniformly in $n\in \N$}.
\end{equation*}
From this formula and $p>1$, we easily get
\begin{equation}\label{triangolorovesciato}
	\limsup_{r\to0^+}\,\, \limsup_{n\to\infty}\int_{B_r(y)}\abs{\hat{f}_n}\,dx=0 \qquad \forall y\in B_{2R}, \,\, \forall B_r(y)\subseteq B_{2R}.
\end{equation}
At this point, by \eqref{scaledequation} and \eqref{triangolorovesciato},
\begin{equation*}
	\limsup_{r\to0^+} \,\, \liminf_{n\to\infty}\int_{B_r(y)}\abs{\psi_R\hat{V}_ne^{\hat{u}_n}-\hat{f}_n}\,dx<4\pi,
\end{equation*}
allowing the use of Remark \ref{localminmasslemma} to conclude that there exists $q>1$ (close to $1$) and a positive constant $C>1$ such that 
\begin{equation*}
	\int_{B_{R}}e^{q\hat{\xi}_n}\,dx\leq C \qquad \text{uniformly in $n\in \N$,}
\end{equation*}
	which, together with $a_n\leq C$ and \eqref{xi_ndef}, implies
	\begin{equation}\label{quadr}
		\int_{B_R}e^{q\hat{w}_n}\,dx\leq C \quad \forall n\in \N.
	\end{equation}
	Using \eqref{quadr}, we can argue as in \eqref{exponentialestimate} to show that, $\forall \,\, p\in[1,q)$, one has
	\begin{equation}\label{quadrato}
		\int_{B_R} \abs{e^{p\hat{w}_n}-e^{p\hat{u}_n}}\,dx=o_n(1).
	\end{equation}

We now estimate $a_n$ from below: indeed, using \eqref{xi_ndef}, \eqref{quadrato} and \eqref{scaledequation}, we get

\begin{equation*}
		Ce^{a_n}\geq e^{a_n}\int_{B_{R}}e^{\hat{\xi}_n}=\int_{B_{R}}e^{\xi_n}=\int_{B_{R}}e^{\hat{w}_n}=\int_{B_{R}}e^{\hat{u}_n}+o_n(1)\geq\frac{\rho}{2}. 
\end{equation*}
 Hence
\begin{equation*}
	e^{a_n}\geq\frac{\rho}{2C} \Longrightarrow a_n\geq\log\left(\frac{\rho}{2C}\right),
\end{equation*}
which, together with the estimate from above, implies that there exists $C>0$ such that
\begin{equation}\label{circ}
	\abs{a_n}\leq C \qquad \text{uniformly in $n\in \N$.}
\end{equation}

Finally, using elliptic estimates, \eqref{quadr}, \eqref{quadrato}, \eqref{circ}, Poincaré inequality and Lemma \ref{gradient estimate}, we conclude that
\begin{equation*}
		\norm{\hat{w}_n}_{W^{2,p}(B_{\frac{R}{2}})}\leq C\left(\norm{\hat{w}_n}_{L^p(B_R)}+\norm{\hat{V}_ne^{\hat{u}_n}}_{L^p(B_R)}\right)\leq C.
\end{equation*}
Thus $\hat{w}_n$ is uniformly bounded in $L^p(B_R)$ for any $R>0$ fixed, allowing us to use once more elliptic estimates and a diagonal argument to show that there exists $\hat{w}\in C^{\alpha}_{loc}(\R^2)\cap H^1_{loc}(\R^2)$ and a subsequence $\hat{w}_n$ (not relabeled) such that
\begin{equation*}
	\hat{w}_n\xrightarrow{n\to\infty}\hat{w} \quad \text{in $C^{\alpha}_{loc}(\R^2)\cap H^1_{loc}(\R^2)$.}
\end{equation*}

In order to conclude, it remains to show that $\hat{w}$ solves \eqref{liouvilleeq}. Multiply $-\Delta \hat{w}_n=\hat{V}_ne^{\hat{u}_n}$ by a smooth test function $\varphi\in C_c^{\infty}(\R^2)$ with $\supp(\varphi)\ssubset B_R(0)$ and integrate by parts to get
\begin{equation}\label{weakformulation}
	\int_{B_R}\nabla\hat{w}_n\nabla\varphi\,dy=\int_{B_R}\hat{V}_ne^{\hat{u}_n}\varphi\,dy=\int_{B_R}\hat{V}_ne^{\hat{w}_n}\varphi\,dy+o_n(1),
\end{equation}
where we used \eqref{quadrato} in the last passage.
Here $n$ is supposed to be large enough so that the integrals are well-defined. By definition $\hat{V}_n(y)=\lambda(h\circ\psi_n^{-1})(r_ny)e^{(\phi_n+\lambda\eta_n)(r_ny)}$, and, since $\psi^{-1}(0)=x_n$ and $x_n\to x\in M$, we see that $\hat{V}_n$ is uniformly bounded in $C^0_{loc}(\R^2)$ and it converges to some positive costant $\mu>0$. More precisely, if $\phi_n\to\bar{\phi}$ in $C^0(B_{\frac{\delta}{2}}(0))$ and $\eta_n\to\bar{\eta}$ in $C^0(B_{\frac{\delta}{2}}(0))$, then $\mu=\lambda h(x)e^{(\bar{\phi}+\lambda\bar{\eta})(0)}>0$. Hence we can pass to the limit in \eqref{weakformulation} and get
\begin{equation*}
	\int_{B_R}\nabla\hat{w}\nabla\varphi\,dy=\int_{B_R}\mu e^{\hat{w}}\varphi\,dy.
\end{equation*} 
Being $\varphi\in C_c^{\infty}(\R^2)$ arbitrary, we see that $\hat{w}$ is a distributional solution to $-\Delta\hat{w}=\mu e^{\hat{w}}$ in $\R^2$. Moreover, Fatou's lemma and a change of variables give
\begin{equation*}
	\int_{B_R} \mu e^{\hat{w}}\,dx\leq\liminf_n\int_{B_{R}}\hat{V}_ne^{\hat{w}_n}\leq\liminf_n\int_{B_{\frac{\delta}{r_n}}}\hat{V}_ne^{\hat{u}_n}\leq \lambda \quad \forall R>0.
\end{equation*}
Hence $\hat{w}$ is a distributional (a posteriori classical) solution of \eqref{liouvilleeq}. At this point, we can use the classification result of \cite{chenli-Duke-1991-classification} to infer that $\hat{w}$ has the form \eqref{bubbleequation} and, moreover, $\int_{\R^2}\mu e^{\hat{w}}=8\pi$.

Finally, if $R_n\to+\infty$ slowly enough, then
\begin{equation*}
	\begin{split}
	\int_{B_{R_nr_n}(x_n)}\lambda(he^{u_n}-1)\dv&=\int_{\psi_n(B_{R_nr_n}(x_n))}\tilde{V}_ne^{\tilde{u}_n}\,dx -\lambda Vol_g(B_{R_nr_n}(x_n)) \\
	&=\int_{\frac{1}{r_n}\psi_n(B_{R_nr_n}(x_n))}\hat{V}_ne^{\hat{u}_n}\,dx -\lambda Vol_g(B_{R_nr_n}(x_n)) \\
	&\to\int_{\R^2}\mu e^{\hat{w}}\,dx=8\pi \quad \text{as $n\to\infty$},
\end{split}
\end{equation*}
which shows \eqref{bubblemass} and concludes our proof.
	 \end{proof}

\begin{remark}
		 If $(u_n)_n$ is as in Proposition \ref{bubbleextractionthm}, it is actually possible to slightly modify the sequence of points $(x_n)_n$ and radii $(r_n)_n$ in order to have $x_0=0$ and $\tau=1$ in \eqref{bubbleequation}. We will always assume this normalization condition to hold in the next Sections.
\end{remark}

\section{Quantization in a simple blow-up}\label{sectionsimpleblowup}	 
	 In the previous Section we showed that, given an unbounded (PS)-sequence $(u_n)_n$, it is possible to extract a bubble of ``mass'' $8\pi$ around each concentration point. Following \cite{malchiodi-2006-CRELLE}, we now introduce an integral notion of ``simple" blow-up point (the original definition, due to Schoen, is slightly different).
	 
	 \begin{defin}\label{simpleblowupdefin}
	 	Let $(u_n)_n$, $(\hat{w}_n)_n$ be as in Section \ref{sectionbubbleextr}. Given three sequences $(x_n)_n\subseteq M$ of points and $(r_n)_n$, $(s_n)_n$ of positive radii with $r_n,s_n\leq i(M)$ (injectivity radius of $M$), we say that they are a \emph{simple blow-up} for $(u_n)_n$ if:
	 
	 	\begin{gather}
	 		r_n\to 0, \quad \frac{r_n}{s_n}\to 0 \quad \text{as $n\to\infty$}; \label{simpleblowupradii}\\
	 		\exists R_n\to\infty \quad \text{such that} \,\, \frac{R_nr_n}{s_n}\to0 \,\,\, \text{and} \quad \norm{\hat{w}_n-\log\frac{\mu}{\left(1+\frac{1}{8}\abs{\cdot}^2\right)^2}}_{C^\alpha(B_{R_n})} \xrightarrow{n\to\infty}0; \label{simpleblowupbubble}\\
	 		\forall\rho>0 \,\, \exists C_\rho>0 \quad \text{such that}\quad \int_{B_s(y)}he^{u_n}\dv\geq\rho \quad \text{and} \quad B_s(y)\subseteq B_{s_n}(x_n)\backslash B_{R_nr_n}(x_n) \notag \\
	 		\text{imply} \quad s\geq \frac{1}{C_\rho}d_g(y,x_n). \label{simpleblowupmass2}
	 	\end{gather}
	 \end{defin}
	 
	 The following Proposition, which is the analogue for (PS)-sequences of Proposition 4.2 of \cite{malchiodi-2006-CRELLE} and of Proposition 2 in \cite{lishafrir-1994}, asserts that, in a simple blow-up, there is no further concentration of mass outside the ``central" bubble:
	 
	 \begin{prop}\label{simpleblowuptheorem}
	 	Let $(u_n)_n$ be a (PS)-sequence for $J_\lambda$ satisfying \eqref{normalization} and \eqref{morseindexproperty}, and let $(x_n)_n$, $(r_n)_n$, $(s_n)_n$ be a simple blow-up for $(u_n)_n$. There exists a positive constant $C>1$ such that, up to a subsequence,
	 	\begin{equation}\label{simpleblowupquantization}
	 		\int_{B_\frac{s_n(x_n)}{C}} \lambda he^{u_n}\dv=8\pi+o_n(1).
	 	\end{equation}
	 		 \end{prop}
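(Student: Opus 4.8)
The plan is to argue by contradiction and to reduce the statement to a \emph{no neck mass} assertion, whose exclusion is exactly where the Morse index bound \eqref{morseindexproperty} enters. By \eqref{bubblemass} and $R_nr_n\to 0$ we have $\int_{B_{R_nr_n}(x_n)}\lambda he^{u_n}\dv\to 8\pi$, so \eqref{simpleblowupquantization} is equivalent to showing that the neck $B_{s_n/C}(x_n)\setminus B_{R_nr_n}(x_n)$ carries vanishing mass, i.e. $\int_{B_{s_n/C}(x_n)\setminus B_{R_nr_n}(x_n)}\lambda he^{u_n}\dv=o_n(1)$. I would assume, for contradiction, that along a subsequence this neck mass stays bounded below by some $m>0$. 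Since $(u_n)_n$ solves no equation, every mass computation must be transferred to the auxiliary solution $v_n$ of \eqref{vn} and to its localized rescalings, using Lemma \ref{approxlemma} together with the exponential comparison \eqref{quadrato} and the $H^1$-closeness \eqref{scaledapproximation}, exactly as in Section \ref{sectionbubbleextr}, taking care of the non scale-invariance of the $L^2$-norm.

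I would first establish a Li--Shafrir type pointwise bound $\lambda he^{u_n}(y)\leq C\,d_g(y,x_n)^{-2}$ in the neck. Were it to fail at some point $y_n$, a rescaling around $y_n$ combined with Remark \ref{localminmasslemma} would produce a concentration profile at a scale $\epsilon_n\ll d_g(y_n,x_n)$, that is, a ball $B_{L\epsilon_n}(y_n)$ of mass at least $4\pi-\delta$ and radius $\ll d_g(y_n,x_n)$; this directly contradicts the simple blow-up condition \eqref{simpleblowupmass2}. The same reasoning shows that no \emph{secondary bubble} can sit in the neck. Consequently the assumed residual mass $m>0$ is necessarily \emph{non-concentrated} and localizes around one or more intermediate scales $R_nr_n\ll t_n\ll s_n$.

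Rescaling at such a scale $t_n$ and using the pointwise bound together with concentration-compactness (Remark \ref{localminmasslemma}), the renormalized sequence converges to a nontrivial profile $w$ solving a Liouville type equation and carrying positive mass. The decisive point is to detect this profile inside the second variation
\[
J_\lambda''(u_n)[w,w]=\int_M |\nabg w|^2\dv-\lambda\int_M he^{u_n}w^2\dv+\lambda\Big(\int_M he^{u_n}w\dv\Big)^2,
\]
by producing from it a negative direction: one builds a radial test function adapted to the profile and localized at scale $t_n$, for which the concentrated potential term $\lambda\int_M he^{u_n}w^2\dv$ dominates the Dirichlet term, while the rank-one term $\lambda(\int_M he^{u_n}w\dv)^2$ remains negligible, so that $J_\lambda''(u_n)[w,w]<-\tfrac1n\norm{w}^2$. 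Since the neck has logarithmic length tending to infinity, persistent residual mass should yield such profiles at an increasing number of mutually well-separated scales; the associated test functions have essentially disjoint supports and therefore span negative subspaces whose dimension is unbounded, contradicting the uniform bound $N$ in \eqref{morseindexproperty}. Hence $m=0$, and \eqref{simpleblowupquantization} follows.

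The heart of the matter, and the step I expect to be the main obstacle, is this last one: checking the sign of $J_\lambda''(u_n)[w,w]$ requires quantitative lower bounds on the local mass at the chosen scales and a careful estimate of the rank-one correction, and converting residual neck mass into a genuinely \emph{growing} family of independent negative directions (rather than a fixed finite number) is delicate. This is precisely where a Pohozaev type identity for the auxiliary solution $v_n$, in the spirit of \cite{lishafrir-1994}, should intervene, relating the neck mass to the radial flux and excluding non-quantized residual profiles. A subsidiary technical burden is that the whole argument has to be carried out for $v_n$ and then transferred back to $u_n$ as in Section \ref{sectionbubbleextr}.
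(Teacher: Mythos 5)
Your reduction to a ``no neck mass'' statement and your identification of the Morse index bound \eqref{morseindexproperty} as the engine of the exclusion are both on target, but there are two genuine gaps at the core of the argument. First, the pointwise Li--Shafrir bound $\lambda he^{u_n}(y)\le C\,d_g(y,x_n)^{-2}$ is not available here: $(u_n)_n$ is only an $H^1$ Palais--Smale sequence, it solves no equation, and sup/inf estimates of that kind belong to the elliptic theory of exact solutions. The paper works exclusively with the \emph{integral} information \eqref{simpleblowupmass2}, which is all that survives the passage from $u_n$ to the auxiliary solution $v_n$. Second, and more seriously, your mechanism for producing unboundedly many negative directions cannot work as described: the total neck mass is bounded by $\lambda$, so persistent residual mass cannot yield profiles at ``an increasing number of well-separated scales'' each carrying a fixed amount of mass. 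The dimension of the negative subspace you can hope to build is governed by how finely a \emph{fixed} residual mass can be subdivided, not by how many scales carry mass.

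What the paper actually does is a dichotomy on how the residual mass $\delta_C$ distributes over dyadic annuli. If it \emph{concentrates} in annuli of bounded relative width (Case $(i)$), one rescales and obtains, via Lemma \ref{singularbubbleextraction}, a limit solving $-\Delta\hat w=\mu e^{\hat w}+\beta\delta_0$ with $\beta\ge 8\pi$, and the Green representation forces $e^{\hat w}\ge e^{-C}\abs{x}^{-\beta/2\pi}\notin L^1(B_1)$; no Morse index is used in this case, and indeed it could not be, since a log-cutoff adapted to an annulus of fixed relative width has Dirichlet energy bounded away from zero. If instead the mass \emph{spreads} (Case $(ii)$), a fixed residual mass $\delta'$ is split into pieces of size $\gamma=\delta'/(10N)$, each captured in an annulus whose inner/outer radius ratio tends to $0$; the piecewise-logarithmic test functions of \eqref{testfunctiondef}, taking values $\pm1$ on two rings of \emph{equal} mass $\gamma$, then have $o_n(1)$ Dirichlet energy, potential term at least $2\gamma$, and rank-one term at most $(4\gamma/100)^2+o_n(1)$ by cancellation of the linear term, yielding exactly $N+1$ disjointly supported negative directions. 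Your proposal conflates these two regimes, gives no mechanism (such as the $\pm1$ balancing) for controlling the rank-one term, and defers the concentrated case to an unspecified Pohozaev identity; these are precisely the steps that carry the proof.
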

	 	
	 	\begin{proof}
	 		By virtue of Proposition \ref{bubbleextractionthm} and \eqref{simpleblowupbubble}, we already know that, up to a subsequence,
	 		\begin{equation*}
	 			\int_{B_{R_nr_n}(x_n)}\lambda he^{u_n}\dv=8\pi+o_n(1);
	 		\end{equation*}	
	 		 then, since $(x_n)_n$, $(r_n)_n$, $\frac{1}{C}(s_n)_n$ is trivially a simple blow-up $\forall C>1$, we only need to show that there is no accumulation of mass in the \emph{neck region} $B_{\frac{s_n}{C}}(x_n)\backslash B_{R_nr_n}(x_n)$ for $C$ large enough.
	 		
	 		 Assume by contradiction that $\forall C>1$ there exists $\delta_C>0$ satisfying
	 		\begin{equation}\label{contradictionassumption}
	 			\liminf_{n\to\infty}\int_{B_{\frac{s_n}{C}}(x_n)\backslash B_{R_nr_n}(x_n)}he^{u_n}\dv\geq\delta_C.
	 		\end{equation}
	 	By definition of simple blow-up (see \eqref{simpleblowupmass2}), there exist $(\tau_{n}^C)_n$ and $L>1$ such that, given
	 	\begin{equation*}
	 		A_{n,C}:=\left\{x\in M\mid \frac{\tau_n^C}{L}\leq d_g(x,x_n)\leq L\tau_n^C\right\},
	 	\end{equation*}
	 	one has
	 	\begin{equation*}
	 		\int_{A_{n,C}} he^{u_n}\dv=\max_{LR_nr_n\leq \tau\leq\frac{s_n}{CL}}\int_{\left\{\frac{\tau}{L}\leq d_g(x,x_n)\leq L\tau\right\}} he^{u_n}\dv=:\sigma_{n,C}<\sigma_C,
	 	\end{equation*}
	 	$\forall n\in \N$ and for a small $\sigma_C\in(0,\delta_C)$. 
	Up to a subsequence, we then face two possibilities:	 	
	
	 	$\mathit{(i)}$ Either
	 	\begin{equation}\label{case1mass}
	 		\lim_{n\to\infty}\int_{A_{n,C}} he^{u_n}\dv\geq\delta_C' \quad \forall n\in \N, \quad \text{for some $\delta_C'\in (0,\sigma_C]$};
	 	\end{equation}
	 	
	 	$\mathit{(ii)}$ or else 
	 	\begin{equation}\label{case2assumption}
	 		\lim_{n\to\infty}\int_{A_{n,C}}he^{u_n}\dv=0.
	 	\end{equation}	  
	 	Heuristically, in case $\mathit{(i)}$ there is a residual mass concentrating in a sequence of dyadic rings of fixed relative width, while in case $\mathit{(ii)}$ the residual mass spreads and vanishes in any sequence of dyadic rings contained in the neck.	  
	 	We now proceed to rule out both cases.
	 	
	 	\medskip
	 	
	 	\underline{Case $(i)$.} Notice that, if $R_n\to+\infty$ slowly enough, then, by \eqref{simpleblowupbubble},
	 		\begin{equation}\label{reversetrianglepap}
	 		\frac{\tau_n^C}{R_nr_n}\to+\infty \quad \text{as} \quad n\to\infty.
	 	\end{equation}	
	 	 Arguing as in Section \ref{sectionbubbleextr}, we localise our problem around $x_n$, defining $\tilde{v}_n$, $\tilde{u}_n$ and $\tilde{V}_n$ as in \eqref{localisedfunctions}. Then, if $\sigma_C$ small enough, \eqref{case1mass}, \eqref{simpleblowupmass2} and \eqref{starpap} imply
	 	\begin{equation}\label{case1localmass}
	 		4\pi>\liminf_{n\to\infty}\int_{\left\{\frac{\tau_n^C}{2L}\leq\abs{x}\leq 2L\tau_n^C\right\}} \tilde{V}_n e^{\tilde{u}_n}\,dx\geq\delta_C'.
	 	\end{equation}
	 	The idea now is to exploit \eqref{case1localmass} and a scaling argument in order to obtain convergence of a scaled sequence towards the solution of a singular limit problem which will provide the desired contradiction. 
	 	Let $C_m\to+\infty$ as $m\to\infty$ and define $\tau_{n,m}:=\tau_n^{C_m}$.
	 	Let 
	 	\begin{equation}\label{hat1,nfunctions}
	 		\begin{cases*}
	 	\hat{v}_{n,m}(x):=\tilde{v}_{n}(\tau_{n,m}x)+2\log(\tau_{n,m}), \\
	 	 \hat{u}_{n,m}(x):=\tilde{u}_{n}(\tau_{n,m}x)+2\log(\tau_{n,m});
	 	\end{cases*}
	 	\end{equation}
	 	 then
	 	\begin{equation*}
	 		\begin{cases*}
	 			-\Delta \hat{v}_{n,m}=\tilde{V}_{n}(\tau_{n,m}x)e^{\hat{u}_{n,m}} &\text{in $B_{\frac{\delta}{\tau_{n,m}}(0)}$,} \\
	 			\int_{B_{\frac{\delta}{\tau_{n,m}}(0)}}\tilde{V}_{n}(\tau_{n,m}x)e^{\hat{u}_{n,m}}\,dx\leq C<+\infty.
	 		\end{cases*}
	 	\end{equation*}	 	
	 	
	 	\medskip
	 	
	 	\begin{lemma}\label{singularbubbleextraction}
	 		There exist a subsequence $(n(m))_m$ and constants $b_{n,m}$ such that $\hat{v}_{n(m),m}+b_{n(m),m}\xrightarrow{m\to\infty}\hat{w}$ in $C^{\alpha}_{loc}(\R^2\backslash\{0\})$, where $\hat{w}$ satisfies
	 		\begin{equation}\label{singularlimitequation}
	 			\begin{cases*}
	 				-\Delta \hat{w}=\mu e^{\hat{w}}+\beta \delta_0 &\text{in $\R^2$,} \\
	 				\int_{\R^2}\mu e^{\hat{w}}\,dx<+\infty,
	 			\end{cases*}
	 		\end{equation}
	 		for some $\mu>0$ and $\beta\in [8\pi,\lambda)$.
	 	\end{lemma}
	 	The proof of Lemma \ref{singularbubbleextraction} is similar to that of Proposition \ref{bubbleextractionthm}, but this time we need to remove a tiny neighborhood of the origin because the scaled sequence is still unbounded around $0$. To keep the thread going, we postpone it to the Appendix.
	 	
	 	\medskip
	 	
	 	Given \eqref{singularlimitequation}, we can use Green's representation formula for $\hat{w}$ in the unitary ball $B_1(0)$: the Green's function of Dirichlet laplacian on the unitary ball is given by
	 	\[
	 	G(x,y)=\frac{1}{2\pi}\left(\log\frac{1}{\abs{x-y}}+\log\abs{\abs{x}y-\frac{x}{\abs{x}}}\right) \quad \text{for $x\not=y\in B_1(0)$ $x\not=0$},
	 	\]
	 	see \cite{evansbook}, Chapter 2. Therefore
	 	\begin{equation*}
	 		\begin{split}
	 		\hat{w}(x)&=\int_{B_1(0)}(\mu e^{\hat{w}(y)}+\beta\delta_0(y))G(x,y)\,dy-\int_{\partial B_1(0)}\hat{w}(y)\frac{\partial G}{\partial\nu}(x,y)\, d\sigma \\
	 		&=\frac{\beta}{2\pi}\log\frac{1}{\abs{x}}+\int_{B_1(0)}\frac{\mu}{2\pi}e^{\hat{w}(y)}G(x,y)\,dy +\obig(1).
	 		\end{split}
	 	\end{equation*}
	 	As a consequence, being $G(x,y)>0$ for $x,y\in B_1(0)$, $\exists \, C>0$ such that
	 	\[
	 	\hat{w}(x)\geq\frac{\beta}{2\pi}\log\frac{1}{\abs{x}}-C \qquad \text{in $B_1(0)\backslash\{0\}$}.
	 	\]
	 	Thus $e^{\hat{w}(x)}\geq\frac{e^{-C}}{\abs{x}^{\frac{\beta}{2\pi}}}$ in $B_1(0)\backslash\{0\}$, contradicting the integrability condition in \eqref{singularlimitequation}.
	 	
	 		\bigskip
	 		
	 		\underline{Case $(ii)$.} In this case the idea is to reach a contradiction to the Morse index condition \eqref{morseindexproperty}, which we recall to be
	 		\begin{equation*}
	 			\sup\left\{\dim(E)\mid E \text{ subspace of } H^1(M), J_\lambda^{''}(u_n)[w,w]<-\frac{1}{n}\norm{w}_{H^1}^2 \,\, \forall w\in E \right\}\leq N \,\, \forall n\in\N.
	 		\end{equation*}
	 		As a consequence, we see that, $\forall n\in \N$, $J_\lambda^{''}(u_n)$ has at most $N$ eigenvalues below $-\frac{1}{n}$.
	 	 Fix any constant $C>1$; as usual, we can localise our problem around $x_n$ and, as a consequence of \eqref{case2assumption}, for any fixed $\delta'\in (0,\delta_C)$ and $R_nr_n<a_n<b_n<s_n$ satisfying 
	 		\begin{equation}\label{case2localisedassumption}
	 		\liminf_{n\to\infty}\int_{a_n\leq\abs{x}\leq b_n} \tilde{V}_n e^{\tilde{u}_n}\,dx\geq\delta',
	 	\end{equation}	
	 	then $\frac{a_n}{b_n}\to 0$ necessarily as $n\to\infty$.
	 	
	 	Let $\gamma:=\frac{\delta'}{10N}$, where $\delta'$ is given by \eqref{case2localisedassumption}. Using the continuity of $r\to\int_{B_r(0)}\tilde{V}_ne^{\tilde{u}_n}$, we can find sequences $(a_{l,n}^i)_n$, $l=1,\dots,N+1$, $i=1,\dots,7$, satisfying the following properties:
	 	\begin{gather}
	 		a_n=a_{1,n}^1 \,\, \forall n, \quad 1>\frac{a_{l,n}^{i}}{a_{l,n}^{i+1}}\xrightarrow{n\to\infty}0 \quad \forall i=1,\dots,6,  \,\,\forall \, n\in \N; \label{extremalschoice1} \\
	 		a_{l,n}^7\leq a_{l+1,n}^1 \quad \forall \, l=1,\dots,N, \,\, \forall n\in\N; \label{extremalschoice2} \\
	 		\int_{a_{l,n}^2\leq\abs{x}\leq a_{l,n}^3}\tilde{V}_ne^{\tilde{u}_n}=\int_{a_{l,n}^5\leq\abs{x}\leq a_{l,n}^6}\tilde{V}_ne^{\tilde{u}_n}=\gamma; \label{masschoicebig} \\
	 		\int_{a_{l,n}^i\leq\abs{x}\leq a_{l,n}^{i+1}}\tilde{V}_ne^{\tilde{u}_n}=\frac{\gamma}{100} \quad \forall i\in\{1,3,4,6\}. \label{masschoicelittle}
	 	\end{gather}
	 		For any $l=1,\dots,N+1$ and $\forall n\in \N$, let 
	 		\begin{equation}\label{testfunctiondef}
	 				\tilde{w}_{l,n}(r):=\begin{cases*}
	 					0 & \text{if $r<a_{l,n}^1$}, \\
	 					-\frac{\log r-\log a_{l,n}^1}{\log a_{l,n}^2-\log a_{l,n}^1} & \text{if $a_{l,n}^1\leq r<a_{l,n}^2$}, \\
	 					-1 & \text{if $a_{l,n}^2\leq r<a_{l,n}^3$}, \\
	 					\frac{\log r-\log a_{l,n}^4}{\log a_{l,n}^4-\log a_{l,n}^3} & \text{if $a_{l,n}^3\leq r < a_{l,n}^4$}, \\
	 					\frac{\log r-\log a_{l,n}^4}{\log a_{l,n}^5-\log a_{l,n}^4} & \text{if $a_{l,n}^4\leq r <a_{l,n}^5$}, \\
	 					1 & \text{if $a_{l,n}^5\leq r< a_{l,n}^6$}, \\
	 					-\frac{\log r -\log a_{l,n}^7}{\log a_{l,n}^7-a_{l,n}^6} & \text{if $a_{l,n}^6\leq r<a_{l,n}^7$}, \\
	 					0 & \text{if $r\geq a_{l,n}^7$},
	 			\end{cases*}
	 		\end{equation}
	 		\begin{figure}
	 			\centering
	 			\begin{tikzpicture}
	 				\draw[gray, thin] (-1,0) -- (14,0);
	 				\draw[black, thin] (-1,0) -- (0,0);
	 				\draw[black, thin] (13,0) -- (14,0);
	 				\draw[black, thin] (0,0) -- (1,-1);
	 				\draw[black, thin] (1,-1) -- (4,-1);
	 				\draw[black, thin] (4,-1) -- (5,0);
	 				\draw[black, thin] (5,0) -- (6.5,1);
	 				\draw[black, thin] (6.5,1) -- (11.5,1);
	 				\draw[black, thin] (11.5,1) -- (13,0);
	 				\draw[->, gray, thin,] (14,0) -- (-1,0);
	 				\filldraw[black] (1,-1) node[anchor=north east]{$-1$};
	 				\filldraw[black] (11.5,1) node[anchor=south west]{$1$};
	 				\filldraw[black] (-1,0) node[anchor=north]{$s=\log r$};
	 				\filldraw[black] (0,0) circle (1pt) node[anchor=south]{$b^1_{l,n}$};
	 				\filldraw[black] (1,0) circle (1pt) node[anchor=south]{$b^2_{l,n}$};
	 				\filldraw[black] (4,0) circle (1pt) node[anchor=south]{$b^3_{l,n}$};
	 				\filldraw[black] (5,0) circle (1pt) node[anchor=north west]{$b^4_{l,n}$};
	 				\filldraw[black] (6.5,0) circle (1pt) node[anchor=north west]{$b^5_{l,n}$};
	 				\filldraw[black] (11.5,0) circle (1pt) node[anchor=north]{$b^6_{l,n}$};
	 				\filldraw[black] (13,0) circle (1pt) node[anchor=north]{$b^7_{l,n}$};
	 			\end{tikzpicture}
	 			\caption{The graph of $\tilde{w}_{l,n}$ in log-polar coordinates $s=\log r$. Here $b^i_{l,n}:=\log a^i_{l,n}$.} \label{figure}
	 		\end{figure}
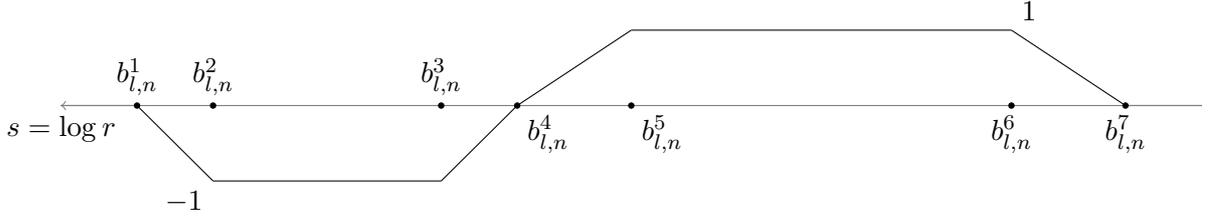
	 		see Figure \ref{figure}. Let then $w_{l,n}:M\to\R$ given by $w_{l,n}(x):=\tilde{w}_{l,n}(\abs{\psi_n(x)})$ for $x\in U_n$ and $w_{l,n}\equiv0$ elsewhere; we recall that $(U_n,\psi_n)$ are the local isothermal charts defined in Section \ref{sectionbubbleextr}.
	 		We can now test the second variation of $J_\lambda$ on those functions. One has
	 		\begin{equation*}
	 			J_\lambda^{''}(u_n)[w,w]=\int_M\abs{\nabg w}^2\dv-\lambda\int_M he^{u_n}w^2\dv+\lambda\left(\int_M he^{u_n}w\dv\right)^2 \quad \forall w\in H^1(M).
	 		\end{equation*}
	 		Hence, for any fixed $l\in\{1,\dots,N+1\}$,
	 		\begin{multline*}
	 				\int_M\abs{\nabg w_{l,n}}^2\dv=\int_{a_{l,n}^1\leq\abs{x}\leq a_{l,n}^7}\abs{\nabla \tilde{w}_{l,n}}^2 e^{-\phi_n}\,dx\leq C\int_{a_{l,n}^1}^{a_{l,n}^7} (\partial_r\tilde{w}_{l,n})^2r\,dr \\
	 				=C\left[\int_{a_{l,n}^1}^{a_{l,n}^2}\frac{dr}{r\left(\log\frac{a_{l,n}^2}{a_{l,n}^1}\right)^2}+\int_{a_{l,n}^3}^{a_{l,n}^4}\frac{dr}{r\left(\log\frac{a_{l,n}^4}{a_{l,n}^3}\right)^2}+\int_{a_{l,n}^4}^{a_{l,n}^5}\frac{dr}{r\left(\log\frac{a_{l,n}^5}{a_{l,n}^4}\right)^2} 
	 				 +\int_{a_{l,n}^6}^{a_{l,n}^7}\frac{dr}{r\left(\log\frac{a_{l,n}^7}{a_{l,n}^6}\right)^2}\right] \\
	 				 =C\left[\frac{1}{\log\frac{a_{l,n}^2}{a_{l,n}^1}}+\frac{1}{\log\frac{a_{l,n}^4}{a_{l,n}^3}}+\frac{1}{\log\frac{a_{l,n}^5}{a_{l,n}^4}}+\frac{1}{\log\frac{a_{l,n}^7}{a_{l,n}^6}}\right]\xrightarrow{n\to+\infty}0 \quad \text{by \eqref{extremalschoice1}.}
	 		\end{multline*}
	 		In other words,
	 		\begin{equation}\label{piece1}
	 			\int_M\abs{\nabg w_{l,n}}^2\dv=o_n(1).
	 		\end{equation}
	 		For the middle term of $J_\lambda^{''}$, we can use \eqref{masschoicebig}, \eqref{masschoicelittle} and \eqref{testfunctiondef} to deduce that
	 		\begin{equation}\label{piece2}
	 			\int_M he^{u_n}w_{l,n}^2\dv\geq2\gamma.
	 		\end{equation}
	 		Similarly, another application of \eqref{masschoicebig}, \eqref{masschoicelittle} and \eqref{testfunctiondef} to the last term of $J_\lambda^{''}$ gives
	 		\begin{equation}\label{piece3}
	 			\left(\int_M he^{u_n}w_{l,n}\dv\right)^2\leq \left(\frac{4}{100}\gamma\right)^2 +o_n(1).
	 		\end{equation}
	 		Finally, from \eqref{piece1}, \eqref{piece2} and \eqref{piece3} we can estimate
	 		\begin{equation*}
	 			J_{\lambda}^{''}(u_n)[w_{l,n},w_{l,n}]\leq -2\lambda\gamma+\lambda\left(\frac{4}{100}\gamma\right)^2+o_n(1),
	 		\end{equation*}
	 		and, being $\norm{w_{l,n}}_{H^1}^2=o_n(1)$ by virtue of \eqref{piece1} and \eqref{testfunctiondef}, we conclude that
	 		\begin{equation*}
	 			J_{\lambda}^{''}(u_n)[w_{l,n},w_{l,n}]<-\lambda\gamma<-\frac{1}{n}\norm{w_{l,n}}_{H^1}^2 \quad \text{for $n$ large enough, $\forall l=1,\dots,N+1$.}
	 		\end{equation*}
	 		Being $(w_{l,n})_n$, $l=1,\dots,N+1$ mutually orthogonal for each $n$ (they have disjoint support by \eqref{extremalschoice2}), we thus obtain a contradiction with \eqref{morseindexproperty}; this completes the proof.
	 	\end{proof}

	 \section{Global quantization and proof of Theorem \ref{maintheorem}}\label{sectionglobalquantization}
	 
	We now proceed to prove Theorem \ref{maintheorem}. Let $\lambda\in(8\pi k,8\pi(k+1))$ and assume by contradiction that $(u_n)_n$ is an \emph{unbounded} Palais-Smale sequence, so that \eqref{minmasseq} holds for $f_n=\lambda(he^{u_n}-1)$. As in \cite{malchiodi-2006-CRELLE}, we proceed in several steps:
	
	\begin{step1*}
		There exists an integer $j\leq k$,  $j$ sequences of points $(x_{1,n})_n,\dots,(x_{j,n})_n\subseteq M$, $x_{i,n}\xrightarrow{n\to\infty}x_i$, radii $(r_{1,n})_n,\dots,(r_{j,n})_n,(\tilde{r}_{1,n})_n,\dots,(\tilde{r}_{j,n})_n$ and positive numbers $\mu_1,\dots,\mu_j>0$ such that, up to a subsequence:
		\begin{gather}
			\frac{\tilde{r}_{i,n}}{r_{i,n}}\to+\infty, \tilde{r}_{i,n}\to0 \,\, \text{as} \,\, n\to\infty \,\,\forall i=1,\dots,j,  \quad B_{\tilde{r}_{i_1,n}}\cap B_{\tilde{r}_{i_2,n}}=\emptyset \,\,\text{for } i_1\not=i_2;  \label{P1} \\
	        \forall R>0, \,\, \hat{w}_{i,n}\to\log\frac{1}{\left(1+\frac{1}{8}\abs{x}^2\right)^2}-\log \mu_i \quad \text{in} \quad H^1(B_R)\cap C^{0,\alpha}(B_R) \quad \text{as $n\to\infty$}; \label{P2}  \\
	        \forall \rho>0 \,\, \exists C_\rho>0 \text{ such that, if } \int_{B_s(y)} he^{u_n}\dv\geq\rho \quad \text{with } B_s(y)\subseteq M\backslash \bigcup_{i=1}^j B_{\tilde{r}_{i,n}}(x_{i,n}),  \notag\\
	        \text{then  } s\geq\frac{1}{C_\rho}d_n(y), \quad \text{where  } d_n(y):=\min_{i=1,\dots,j} d_g(y,{x_{i,n}}). \label{P3} 
	    \end{gather}
	    Here the $\hat{w}_{i,n}$ are defined as in \eqref{w_nhat}.
	\end{step1*}
	 
	 \begin{proof}
	 	Fix $\rho\in (0,4\pi)$ and define $(x_{1,n})_n\subseteq M$ and $(r_{1,n})_n\subseteq \R_{>0}$ satisfying 
	 	\begin{equation*}
	 		\int_{B_{r_{1,n}}(x_{1,n})} he^{u_n}\dv=\max_{x\in M}\int_{B_{r_{1,n}}(x)} he^{u_n}\dv=\rho.
	 	\end{equation*}
	 	Notice that, by virtue of \eqref{minmasseq}, we must have $r_{1,n}\xrightarrow{n\to\infty}0$ necessarily. \\
	    Now, if $\frac{\tilde{r}_{1,n}}{r_{1,n}}\to+\infty$ sufficiently slowly, then the assumptions of Proposition \ref{bubbleextractionthm} are satisfied and we can use it to isolate a blow-up profile. In this way, we obtain \eqref{P1} and \eqref{P2} for $i=1$.
	    At this point, if \eqref{P3} holds for $j=1$, then we are done. 
	    
	    If instead \eqref{P3} does not hold, then $\exists \rho_1\in (0,4\pi)$ and $\exists (y_n)_n\subseteq M$ and $(\bar{r}_n)_n\subseteq \R_{>0}$ such that:
	    \begin{equation*}
	    	\int_{B_{\bar{r}_n}(y_n)} he^{u_n}\dv\geq\rho_1, \qquad B_{\bar{r}_n}(y_n)\subseteq M\backslash B_{\tilde{r}_{1,n}}(x_{1,n}) \quad \text{and} \quad \frac{\bar{r}_n}{d_g(y_n, x_{1,n})}\xrightarrow{n\to\infty}0.
	    \end{equation*}
	    In this case, we can define new sequences of points $(x_{2,n})_n$ and radii $(r_{2,n})_n$ satisfying
	    \begin{equation*}
	    	\int_{B_{r_{2,n}}(x_{2,n})} he^{u_n}\dv=\max_{B_{r_{2,n}}(y)\subseteq M\backslash B_{\tilde{r}_{1,n}}(x_{1,n})}\int_{B_{r_{2,n}}(y)} he^{u_n}\dv=\rho_1.
	    \end{equation*}
	    It is now possible to apply Proposition \ref{bubbleextractionthm} a second time to isolate another concentration profile. Moreover, if $\frac{\tilde{r}_{1,n}}{r_{1,n}}\to+\infty$ slowly enough, one further has
	    \begin{equation*}
	    	\frac{\tilde{r}_{1,n}}{d_g(x_{1,n},x_{2,n})}\longrightarrow0, \qquad \frac{r_{2,n}}{d_g(x_{1,n},x_{2,n})}\longrightarrow0 \quad \text{as $n\to+\infty$};
	    \end{equation*}
	    in particular $r_{2,n}\xrightarrow{n\to\infty}0$. Finally, we can find $\tilde{r}_{2,n}\to0$ such that 
	    \begin{equation*}
	    	\int_{B_{r_{2,n}}(y)} he^{u_n}\dv\leq \rho_1 \quad \forall y\in B_{\tilde{r}_{2,n}}(x_{2,n}), \quad  \frac{\tilde{r}_{2,n}}{r_{2,n}}\to+\infty \quad \text{and} \quad \frac{\tilde{r}_{2,n}}{d_g(x_{1,n},x_{2,n})}\to 0 \quad \text{as $n\to\infty$.}
	    \end{equation*}
	    In other words, the second bubble is separated from the first one, that is, $B_{\tilde{r}_{1,n}}(x_{1,n})\cap B_{\tilde{r}_{2,n}}(x_{2,n})=\emptyset$ $\forall n$ (up to subsequences).
	    
	    We can continue in this way and observe that $j\leq k$ necessarily as each blow-up profile contributes with $8\pi$ of mass and $\lambda<8\pi(k+1)$. This concludes the proof.
	 \end{proof}

	 \begin{step2*}
	 	If $j=1$ in Step 1, then 
	 	\begin{equation*}
	 		\int_M \lambda he^{u_n}\dv=8\pi +o_n(1).
	 	\end{equation*} 
	 \end{step2*}
	 
	 \begin{proof}
	 	Choose $s_{1,n}:=\frac{1}{2}i(M)$ $\forall n\in \N$, where $i(M)$ is the injectivity radius of $M$. Then, by \eqref{P3}, we know that $(x_{1,n})_n$, $(r_{1,n})_n$ and $(s_{1,n})_n$ are a \emph{simple blow-up} for $(u_n)_n$. Therefore we can apply Proposition \ref{simpleblowuptheorem} and obtain that 
	 	\begin{equation}\label{firsteq}
	 		\int_{B_{\frac{1}{2C}i(M)}(x_{1,n})} \lambda he^{u_n}\dv =8\pi +o_n(1),
	 	\end{equation}
	 	for some positive constant $C>1$. By virtue of \eqref{P3}, we can cover $M\backslash B_{\frac{1}{2C}i(M)}(x_{1,n})$ with a finite number $A$ of balls $B_{r_i}(y_i)$, $i=1,\dots,A$ wich is uniform in $n\in \N$ and such that
	 	\begin{equation}\label{triangletrianglepap}
	 		\int_{B_{2r_i}(y_i)}\lambda(he^{u_n}-1)\dv<\pi \qquad \forall i=1,\dots,A.
	 	\end{equation}
	 	At this point, we can proceed as in Lemma \ref{minimalmass}: let $f_n:=\lambda(he^{u_n}-1)$ and $q>1$; then, if $q$ is close enough to $1$, we have the following estimate:
	 	\begin{equation}\label{est52}
	 		\begin{split}
	 			\int_{B_{r_i}(y_i)} e^{q(v_n-\bar{v}_n)}\dv(x)&\leq C\int_{B_{r_i}(y_i)}\exp\left(\int_{B_{2r_i}(y_i)}qG(x,y)f_n\dv(y)\right)\dv(x) \\
	 			&\leq C\sup_{y\in B_{2r_i}(y_i)}\int_{B_{r_i}(y_i)}e^{q\abs{G(x,y)}\normps{f_n}{1}{B_{2r_i}(y_i)}}\dv(x) \\
	 			&\leq C\sup_{y\in M} \int_M \left(\frac{1}{d_g(x,y)}\right)^{\frac{q\normps{f_n}{1}{B_{2r_i}(y_i)}}{2\pi}}\dv(x)<+\infty.
	 		\end{split}
	 	\end{equation}
	We can now argue as in \eqref{exponentialestimate} to recover that, for $p\in(1,q)$,
	\begin{equation*}
		\int_{B_{r_i}(y_i)}\abs{e^{u_n}-e^{v_n}}^p\dv=o_n(1) \qquad \forall i=1,\dots,A.
	\end{equation*}	 	
	 	From this estimate, \eqref{triangletrianglepap}, \eqref{est52} and $\bar{v}_n\to-\infty$ we get
	 	\begin{equation*}
	 		\begin{split}
	 			\int_{M\backslash B_{\frac{1}{2C}i(M)}}\lambda he^{u_n}\dv&=\int_{M\backslash B_{\frac{1}{2C}i(M)}}\lambda he^{v_n}\dv +o_n(1)=e^{\bar{v}_n}\int_{M\backslash B_{\frac{1}{2C}i(M)}}\lambda he^{(v_n-\bar{v}_n)}\dv+ o_n(1) \\
	 			&\leq C A e^{\bar{v}_n}\sup_{y\in M, \,\, i=1,\dots,A}\int_M \left(\frac{1}{d_g(x,y)}\right)^{\frac{\normps{f_n}{1}{B_{2r_i}(y_i)}}{2\pi}}\dv(x)+o_n(1) \\
	 			&\leq \tilde{C} e^{\bar{v}_n}+o_n(1)\xrightarrow{n\to\infty}0,
	 		\end{split}
	 	\end{equation*}
	 	which, together with \eqref{firsteq}, gives us the conclusion.
	 \end{proof}

	Finally, we consider the general case:
	
	\begin{step3*}
		If $j$ in Step 1 is arbitrary, then, up to a subsequence,
		\begin{equation}\label{step3quantization}
			\int_M \lambda he^{u_n}\dv=8\pi j+o_n(1).
		\end{equation}
	\end{step3*}
	
	\begin{proof}
		We need to proceed carefully as multiple bubbles might accumulate in the same point. The key idea is to analyze the different clusters and work inductively, following the same arguments of \cite{lishafrir-1994} and \cite{malchiodi-2006-CRELLE}.
		
		Without loss of generality, assume that
		\begin{equation}\label{step3eq}
			d_g(x_{1,n},x_{2,n})=\inf_{i\not=h} d_g(x_{i,n},x_{h,n}) \qquad \forall n\in \N.
		\end{equation}
		If $d_g(x_{1,n},x_{2,n})\not\to 0$, then each ``bubble" concentrates in a different point and we can argue as in Step 2 to get \eqref{step3quantization}.
		
		Assume now $d_g(x_{1,n},x_{2,n})\to0$ as $n\to\infty$. Let $X_n:=\{x_{1,n},\dots,x_{j,n}\}$ be the set of points of Step 1, and let $J_{1,n}\subseteq X_n$ be given by
		\begin{equation*}
			J_{1,n}:=\left\{x_{1,n},\dots,x_{\gamma,n}\mid \exists C>0 \text{ such that } d_g(x_{i,n},x_{1,n})\leq Cd_g(x_{1,n},x_{2,n}) \quad \forall i\not =1\right\}.
		\end{equation*}
		In other words, $J_{1,n}$ is the set of accumulation points for which the distance from $x_{1,n}$ is comparable to $d_g(x_{1,n},x_{2,n})$ when $n$ goes to infinity and, up to relabeling, we are assuming that $J_{1,n}$ is made up of the first $\gamma$ points of $X_n$, $\gamma\leq j$.
		By Step 1, the sequences $(x_{i,n})_n$, $(r_{i,n})_n$ and $(\frac{1}{C}d_g(x_{1,n},x_{2,n}))_n$ are simple blow-ups for $(u_n)_n$ for each $i=1,\dots,\gamma$. Hence we can apply Proposition \ref{simpleblowuptheorem} to see that, up to taking a larger $C$, one has
		\begin{equation}\label{star}
			\int_{B_{\frac{1}{C}d_g(x_{1,n},x_{2,n})}(x_{i,n})}\lambda he^{u_n}\dv=8\pi+o_n(1) \qquad \forall i=1,\dots,\gamma.
		\end{equation}
		
		To proceed further, we need to prove that there is no further accumulation of mass in a neighborhood of $J_{1,n}$ of size comparable to $d_g(x_{1,n},x_{2,n})$:
		
		\begin{lemma}\label{firstlemma}
			If $C>1$ is large enough, then
			\begin{equation*}
				\int_{B_{Cd_g(x_{1,n},x_{2,n})}(x_{1,n})} \lambda he^{u_n}\dv=8\pi \gamma+o_n(1).
			\end{equation*}
		\end{lemma}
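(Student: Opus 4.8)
The plan is to split the claim into a lower and an upper bound for the mass contained in $B_{Cd_n}(x_{1,n})$, where I write $d_n:=d_g(x_{1,n},x_{2,n})$ for the smallest inter-bubble distance. For the lower bound I would first observe that, by \eqref{step3eq}, the centres $x_{1,n},\dots,x_{\gamma,n}$ of the cluster $J_{1,n}$ are mutually $d_n$-separated, so that for $C>2$ the balls $B_{\frac1C d_n}(x_{i,n})$, $i=1,\dots,\gamma$, are pairwise disjoint and, for $C$ large, all contained in $B_{Cd_n}(x_{1,n})$ (since $d_g(x_{i,n},x_{1,n})\le C' d_n$ on the cluster). Summing \eqref{star} over $i$ then yields $\int_{B_{Cd_n}(x_{1,n})}\lambda h e^{u_n}\dv\ge 8\pi\gamma+o_n(1)$. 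Thus the content of the Lemma is the matching upper bound, i.e. that the \emph{intra-cluster neck} $B_{Cd_n}(x_{1,n})\setminus\bigcup_{i=1}^\gamma B_{\frac1C d_n}(x_{i,n})$ carries no mass in the limit; note that the inner neck of each single bubble is already accounted for by Proposition~\ref{simpleblowuptheorem}, which is precisely what produced \eqref{star}.

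To analyse the neck I would rescale at the cluster scale $d_n$, rather than at the single-bubble scale $r_{i,n}$. Working in the isothermal charts of Section~\ref{sectionbubbleextr} and setting $\hat u_n(x):=\tilde u_n(d_n x)+2\log d_n$, $\hat v_n(x):=\tilde v_n(d_n x)+2\log d_n$ (with an additive normalisation as in \eqref{w_nhat}, since the $L^2$-norm is not scale invariant), the rescaled centres $y_{i,n}:=\psi_n(x_{i,n})/d_n$ stay bounded and, by the definition of $J_{1,n}$ and \eqref{step3eq}, are mutually separated; up to a subsequence $y_{i,n}\to y_i$ with the $y_i$ distinct, $i=1,\dots,\gamma$. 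Each bubble, living at scale $r_{i,n}\ll d_n$, collapses to the point $y_i$ and contributes a Dirac mass of weight $8\pi$. Crucially, \eqref{P3} rescales to the statement that no concentration of mass $\ge\rho$ can occur at scales $\ll d_n$ away from the $y_i$; hence, applying the localised concentration-compactness alternative of Remark~\ref{localminmasslemma} to the rescaled auxiliary sequence, the only possible concentration points in a large ball are the $y_i$, and away from them $\hat v_n$ (minus its mean) is bounded in $e^{q\,\cdot}$ for some $q>1$. Passing from $\hat v_n$ to $\hat u_n$ as in \eqref{exponentialestimate}--\eqref{quadrato}, I would extract a limit $\hat w$ solving, in the distributional sense on $\R^2$,
\[
-\Delta\hat w=\mu e^{\hat w}+\sum_{i=1}^\gamma 8\pi\,\delta_{y_i},\qquad \int_{\R^2}\mu e^{\hat w}\,dx<+\infty,
\]
for some $\mu\ge 0$.

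It then remains to show that the residual mass in the neck vanishes; in the limit this mass equals $\int\mu e^{\hat w}$ over the annulus $B_C(y_1)\setminus\bigcup_i B_{1/C}(y_i)$, and I must show it is zero. This is the main obstacle. Unlike the single-bubble case of Step~2, one cannot simply invoke $\bar v_n\to-\infty$ on a fixed cover, because in the neck between two bubbles separated by only $d_n$ the rescaled potential is pinned up by the adjacent concentrations and need not tend to $-\infty$, so a naive covering estimate fails. Following the concentration-compactness philosophy of \cite{lishafrir-1994}, I would argue by contradiction: a positive regular mass $\int\mu e^{\hat w}>0$ would have to either concentrate or spread. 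By the rescaled form of \eqref{P3} any concentration must occur at one of the $y_i$, which would force the corresponding bubble to carry strictly more than $8\pi$ and contradict \eqref{star}; spreading over dyadic annuli is excluded as in Case~$(ii)$ of Proposition~\ref{simpleblowuptheorem} through the Morse-index condition \eqref{morseindexproperty}, or by the finiteness $\int_{\R^2}\mu e^{\hat w}<\lambda$ of the singular limit. In either case we reach a contradiction, so the neck carries no mass and, together with the lower bound, this yields $\int_{B_{Cd_n}(x_{1,n})}\lambda he^{u_n}\dv=8\pi\gamma+o_n(1)$. The points where I expect the real work to lie are the correct renormalisation of the rescaled sequence (so that $\mu$ is well defined and the singular limit is non-degenerate) and the rigorous exclusion of spread-out residual mass in the inter-bubble neck.
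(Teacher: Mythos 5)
Your setup (lower bound by summing \eqref{star} over the cluster, then rescaling at the cluster scale $d_n=d_g(x_{1,n},x_{2,n})$ so that the bubbles collapse to distinct points $y_1,\dots,y_\gamma$) coincides with the paper's, and your identification of the intra-cluster annulus as the crux is correct. The gap is in how you dispose of the diffuse residual mass there. Your dichotomy ``concentrate or spread'' does not apply at this stage: after rescaling, the annulus $B_C\setminus\bigcup_i B_{1/C}(y_i)$ sits at a \emph{single fixed scale}, so the dyadic-ring/Morse-index construction of Case $(ii)$ of Proposition \ref{simpleblowuptheorem} — which needs a neck spanning unboundedly many dyadic scales ($a_n/b_n\to0$) in order to fit $N+1$ disjoint test functions — has no room to run; and the bound $\int_{\R^2}\mu e^{\hat w}\,dx<\lambda$ by itself is perfectly compatible with a positive diffuse mass in the annulus. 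Moreover, you assert the extraction of a finite limit $\hat w$ solving $-\Delta\hat w=\mu e^{\hat w}+\sum_i 8\pi\delta_{y_i}$ without addressing the real alternative, namely that $\hat w_n$ may (and in fact must) diverge to $-\infty$ locally uniformly off the singular set, in which case the limit measure is purely atomic. Note that if such a finite limit with $\mu>0$ existed, Green's representation near $y_1$ would already give $e^{\hat w(x)}\geq C|x-y_1|^{-4}\notin L^1$, contradicting $\int\mu e^{\hat w}<+\infty$ — so your own limit equation is self-defeating rather than a tool for estimating the neck.

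The paper's mechanism, which you are missing, is a Brezis--Merle type argument on the rescaled sequence: one first shows (via \eqref{P3} and \cite[Corollary 4]{brezismerle-1991}) that $\hat w_n^+$ is locally bounded off the blow-up set, then splits $\hat w_n=\phi_n+\eta_n$ with $\phi_n$ solving the Dirichlet problem for $\hat W_ne^{\hat w_n}$ in $B_C$ and $\eta_n$ harmonic. Harnack's inequality gives the alternative: either $\eta_n$ is locally bounded or $\eta_n\to-\infty$ locally uniformly. The bounded alternative is excluded exactly by the $|x-\bar x_1|^{-4}$ non-integrability above, implemented through a comparison function $\zeta_n$ and the maximum principle (this uses that the bubble at $\bar x_1$ carries at least $8\pi$, together with the uniform bound $\int_M e^{u_n}\dv\leq C$, which is where the approximation $\int|e^{\hat w_n}-e^{\hat u_n}|=o_n(1)$ enters). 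Hence $\hat w_n\to-\infty$ locally uniformly on $B_C\setminus\{y_1,\dots,y_\gamma\}$, so $\hat W_ne^{\hat w_n}\to0$ in $L^1$ of the annulus and the neck carries no mass. Without this step (or an equivalent substitute) your proof does not close.
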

		
		\begin{proof}
		To begin, we localise the problem as in Section \ref{sectionbubbleextr} by taking a sequence $(U_n,\psi_n)$ of local isothermal charts centered at $x_{1,n}$ and defining $\tilde{u}_n,\tilde{v}_n$ accordingly as in \eqref{localisedfunctions}. Let then $r_n:=d_g(x_{1,n},x_{2,n})$ and define the scaled functions $\hat{u}_n$ and $\hat{v}_n$ as in \eqref{localisedscaledfunctions}. By construction, we get that $r_n\psi_n(x_{i,n})\to \bar{x}_i\in\R^2$ $\forall i=1,\dots,\gamma$, and, by the scaling invariance of the mass, $\{\bar{x}_1\dots,\bar{x}_\gamma\}=:\mathcal{S}$ are the concentration points of $\hat{V}_n e^{\hat{u}_n}\,dx$.
		
		Let $C>1$ be so large that $\mathcal{S}\subset B_{\frac{C}{2}}(0)$. As in the proof of Proposition \ref{bubbleextractionthm}, let $\hat{w}_n:=\hat{v}_n+b_n$ with $b_n\in\R$ such that
		\begin{equation*}
			a_n:=\frac{1}{\abs{B_{2C}}}\int_{B_{2C}}\hat{u}_n\,dx=\frac{1}{\abs{B_{2C}}}\int_{B_{2C}}\hat{w}_n\,dx.
		\end{equation*}
		Then $\norm{\hat{u}_n-\hat{w}_n}_{H^1(B_{2C})}=o_n(1)$ as in \eqref{scaledapproximation} and $-\Delta\hat{w}_n=\hat{V}_ne^{\hat{u}_n}=:\hat{W}_n e^{\hat{w}_n}$, where $\hat{W}_n$ is uniformly bounded in $L^p(B_{C}) \,\, \forall p\in[1,+\infty)$ as can be seen by localising the Moser-Trudinger inequality.
		\begin{claim1*}\label{claim1}
			For any $\Omega\ssubset B_C(0)\backslash\mathcal{S}$, the sequence $(\hat{w}_n^+)_n$ is bounded in $L^\infty(\Omega)$.
		\end{claim1*}
		\begin{proof}
			By virtue of \eqref{P3}, for any fixed $\rho\in(0;2\pi)$ there exists $r=r(\rho)>0$, $r<\frac{1}{2}\textit{dist}(\Omega,\mathcal{S})$, such that $\forall x\in \Omega$ one has 
			\begin{equation}\label{massconditionclaim}
				\int_{B_r(x)}\hat{W}_n e^{\hat{w}_n}=\int_{B_r(x)}\hat{V}_n e^{\hat{u}_n}\leq \rho.
			\end{equation}
			
			The idea now is to apply \cite[Corollary 4]{brezismerle-1991} to $(\hat{w}_n)_n$ in $B_{\frac{r}{2}}(x)$ in order to conclude that $(\hat{w}_n^+)_n$ is bounded in $L^\infty(B_{\frac{r}{4}}(x))$; taking a finite cover of $\Omega$ we will then complete the proof of the claim.
			
			In order to use the aforementioned result, we need to check that $\exists \,\, p>1$ such that the following three assumptions hold:
			\begin{itemize}
				\item[(a)] $\norm{\hat{W}_n}_{L^p(B_{\frac{r}{2}}(x))}$ is uniformly bounded;
				\item[(b)] there exists $\epsilon>0$ such that $\int_{B_\frac{r}{2}(x)} \hat{W}_ne^{\hat{w}_n}\leq \epsilon<\frac{4\pi}{p'}$;
				\item[(c)] $\norm{\hat{w}_n^+}_{L^1(B_{\frac{r}{2}}(x))}$ is uniformly bounded.
			\end{itemize}
			Being the first two assumptions immediate consequences of Moser-Trudinger inequality \eqref{moser-trudinger-inequality} and of \eqref{massconditionclaim} respectively for a large enough $p>1$, it only remains to prove (c). Let $\psi_r$ be a smooth cutoff function such that $\psi_r\equiv1$ in $B_{\frac{r}{2}}(x)$ and $\psi_r\equiv0$ near $\partial B_r(x)$; let then
			\[
			\xi_n:=\psi_r \hat{w}_n+(1-\psi_r)a_n, \qquad \hat{\xi}_n:=\xi_n-a_n.
			\]
			Arguing exactly as in the proof of Proposition \ref{bubbleextractionthm}, we see that $\exists \, q>1$ such that $\int_{B_\frac{r}{2}(x)} e^{q\hat{\xi}_n}\leq C$, which in turn implies that $\int_{B_\frac{r}{2}(x)} e^{q\hat{w}_n}\leq C$ (recall that $a_n$ is uniformly bounded from above) and thus (c) holds.
		\end{proof}
		
		With the above Claim at hand, the rest of the proof now closely follows that of \cite[Theorem 3]{brezismerle-1991} apart from some minor adaptations.
		Let $\phi_n$ be the solution of 
		\begin{equation*}
			\begin{cases*}
				-\Delta \phi_n = \hat{W}_n e^{\hat{w}_n} & \text{in $B_C(0)$}, \\
				\phi_n =0 & \text{in $\partial B_C(0)$},
			\end{cases*}
		\end{equation*}
		and let $\eta_n:=\hat{w}_n-\phi_n$. Notice that each $\eta_n$ is harmonic and that, by Claim 1 and the maximum principle, one has $(\eta_n^+)_n\in L^\infty_{loc}(B_C\backslash \mathcal{S})$. As a consequence of Harnack's inequality, up to a subsequence:
		\begin{itemize}
			\item[(a)] either $(\eta_n)_n$ is bounded in $L^\infty_{loc}(B_C\backslash\mathcal{S})$;
			\item[(b)] or else $\eta_n\to-\infty$ uniformly on compact subsets of $B_C\backslash\mathcal{S}$.
		\end{itemize}
		\begin{claim2*}
			Alternative (a) cannot hold.
		\end{claim2*}
		\begin{proof}
			Let $r>0$ small enough so that $\mathcal{S}\cap B_{2r}(\bar{x}_1)=\{\bar{x}_1\}$. Assume by contradiction that (a) holds; then in particular $\eta_n$ is uniformly bounded in $L^\infty(\partial B_r(\bar{x}_1))$. Moreover, by Claim 1 we know that $\hat{W}_n e^{\hat{w}_n}$ is uniformly bounded in $L^p_{loc}(B_{2r}\backslash \{\bar{x}_1\})$ $\forall p>1$, so, by standard elliptic estimates, $\phi_n$ uniformly converges to a limit function $\phi$ in $L^\infty(\partial B_r(\bar{x}_1))$. Hence there exists a constant $C_0>0$ such that $\norm{\hat{w}_n}_{L^\infty(\partial B_r(\bar{x}_1))}\leq C_0$. Let $\zeta_n$ be the solution of
				\begin{equation*}
				\begin{cases*}
					-\Delta \zeta_n = \hat{W}_n e^{\hat{w}_n} & \text{in $B_r(\bar{x}_1)$}, \\
					\zeta_n =-C_0 & \text{in $\partial B_r(\bar{x}_1)$}.
				\end{cases*}
			\end{equation*}
			By the maximum principle, $\hat{w}_n\geq \zeta_n$ in $B_r(\bar{x}_1)$. Consider now a small value $0<\delta\ll r$; by Claim 1 we know that $\exists C=C_\delta>0$ such that, $\forall n\in\N$, $\hat{w}_n\leq C_\delta$ in $\{\delta<\abs{x-\bar{x}_1}<r\}=:A_{\delta,r}$. From this fact, arguing as in \eqref{exponentialestimate}, we obtain that 
			\[
			\int_{A_{\delta,r}}\abs{e^{\hat{w}_n}-e^{\hat{u}_n}}\,dx=o_n(1),
			\]
		    therefore
		    \begin{equation}\label{contrassumption}
		    \int_{A_{\delta,r}} e^{\zeta_n}\leq \int_{A_{\delta,r}} e^{\hat{w}_n}=\int_{A_{\delta,r}}e^{\hat{u}_n} +o_n(1)\leq \int_M e^{u_n}\dv+o_n(1)\leq \bar{C},
		    \end{equation}
		    where $\bar{C}$ \emph{does not} depend upon $\delta$.
		    
		    On the other hand, being $\hat{W}_n e^{\hat{w}_n}$ uniformly bounded in $L^1$, we see that  $\zeta_n$ is uniformly bounded in $W^{1,q}(B_r(\bar{x}_1))$ $\forall q\in[1;2)$ (see \cite{stampacchia1965egularity}) and also in any $L^p$, $p<+\infty$, by Sobolev embedding. Hence, up to a subsequence,
		    $\zeta_n$ a.e. converges to a limit function $\zeta$ (the convergence is also uniform on compact subsets of $B_r(\bar{x}_1)\backslash\{\bar{x}_1\}$) such that, if $\hat{W}_n e^{\hat{w}_n}\rightharpoonup \mu$ weakly in the sense of measures, then $-\Delta \zeta=\mu$ in $B_r(\bar{x}_1)$ with $\zeta=-C_0$ on $\partial B_r$.
		    Being $\mu\geq 0$ and $\mu\geq 8\pi\delta_{\bar{x}_1}$, by Green's representation formula we see that
		    \[
		    \zeta(x)\geq 4 \log\frac{1}{\abs{x-\bar{x}_1}} -C \qquad \text{for $x$ close to $\bar{x}_1$}.
		    \]
		    Therefore
		    \begin{equation*}
		    	\int_{A_{\delta,r}} e^{\zeta_n}\xrightarrow{n\to\infty}\int_{A_{\delta,r}} e^{\zeta}\xrightarrow{\delta\to 0}+\infty,
		    \end{equation*}
		    in contradiction with \eqref{contrassumption}.
		\end{proof}

		From Claim 2 and the uniform boundedness of $(\phi_n)_n$ on compact subsets of $B_C\backslash\mathcal{S}$, we see that $\hat{w}_n\to-\infty$ in $L^\infty_{loc}(B_C\backslash\mathcal{S})$ (the whole sequence!). Hence $\hat{W}_ne^{\hat{w}_n}\to 0$ in $L^p_{loc}(B_C\backslash\mathcal{S})$ and so, taking $\Omega\ssubset \R^2$ such that $\textit{dist}(\Omega,\mathcal{S})>0$ and $\psi_n(B_{Cd_g(x_{1,n},x_{2,n})}(x_{1,n})\backslash\bigcup_{i=1}^\gamma B_{\frac{1}{C}d_g(x_{1,n},x_{2,n})}(x_{i,n}))\subset \Omega$ for $n$ large enough (this is possible because the metric becomes close to the Euclidean one on shrinking balls), one gets
		\begin{equation*}
			\int_{B_{Cd_g(x_{1,n},x_{2,n})}(x_{1,n})\backslash\bigcup_{i=1}^\gamma B_{\frac{1}{C}d_g(x_{1,n},x_{2,n})}(x_{i,n})} \lambda he^{u_n}\dv\leq \int_\Omega \hat{V}_n e^{\hat{u}_n}\,dx=\int_\Omega \hat{W}_n e^{\hat{w}_n}\,dx \to0
		\end{equation*}
		as $n\to+\infty$, which, coupled with \eqref{star}, gives us the conclusion.
		\end{proof}

	At this point, let
	\begin{equation*}
		d_{1,n}:=\inf\left\{d_g(x_{1,n},x_{i,n})\mid x_{i,n}\in X_n\backslash J_{1,n}\right\}.
	\end{equation*}
	By definition, $\frac{d_{1,n}}{d_g(x_{1,n},x_{2,n})}\to +\infty$ as $n\to\infty$. We now show that there is no residual mass in between these two scales:
	
	\begin{lemma}\label{secondlemma}
		There exists a constant $C>0$ such that, up to a subsequence,
		\begin{equation*}
			\int_{B_{\frac{1}{C}d_{1,n}}(x_{1,n})}\lambda he^{u_n}\dv=8\pi \gamma +o_n(1).
		\end{equation*}
	\end{lemma}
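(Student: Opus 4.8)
The plan is to argue by contradiction, repeating almost verbatim the proof of Proposition \ref{simpleblowuptheorem}, but now treating the whole cluster $J_{1,n}$ as a single concentration profile of total mass $8\pi\gamma$. Setting $r_n:=d_g(x_{1,n},x_{2,n})$, Lemma \ref{firstlemma} already tells us that the inner ball $B_{Cr_n}(x_{1,n})$ carries mass $8\pi\gamma+o_n(1)$, so it suffices to show that the annular ``neck'' $B_{\frac{1}{C}d_{1,n}}(x_{1,n})\backslash B_{Cr_n}(x_{1,n})$ carries only $o_n(1)$ of mass. By definition of $d_{1,n}$ there are no concentration points of Step 1 inside this annulus, and property \eqref{P3} rules out concentration of mass at scales smaller than the distance to the cluster.

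First I would assume, for contradiction, that the neck carries a residual mass bounded below by some $\delta_C>0$. As in the proof of Proposition \ref{simpleblowuptheorem}, I would then introduce the maximal dyadic-ring mass $\sigma_{n,C}$ over rings of fixed relative width $L$ contained in the neck, localise the problem around $x_{1,n}$ via the isothermal charts of Section \ref{sectionbubbleextr}, and split into the two usual alternatives according to whether $\sigma_{n,C}$ stays bounded below or tends to zero.

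In Case $(i)$, where $\sigma_{n,C}$ stays bounded below, I would rescale at the corresponding annular scale $\tau_n^C$. Since $\tau_n^C/r_n\to+\infty$, the entire cluster collapses to the origin in the limit and concentrates a mass $\beta\geq 8\pi\gamma\geq 8\pi$; following Lemma \ref{singularbubbleextraction} one obtains a limit solving the singular Liouville equation $-\Delta\hat{w}=\mu e^{\hat{w}}+\beta\delta_0$, and Green's representation then yields $e^{\hat{w}}\gtrsim\abs{x}^{-\beta/2\pi}$ near $0$ with $\beta/2\pi\geq4$, contradicting the integrability of $e^{\hat{w}}$. In Case $(ii)$, where $\sigma_{n,C}\to0$, the residual mass spreads thinly along the neck, and I would build $N+1$ mutually orthogonal test functions supported in disjoint dyadic sub-annuli, exactly as in \eqref{testfunctiondef}, each making $J_\lambda''(u_n)[w,w]$ strictly less than $-\frac{1}{n}\norm{w}^2$; this contradicts the Morse-index bound \eqref{morseindexproperty}.

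The main point requiring care is the verification that, at the neck scale $\tau_n^C$, the cluster genuinely collapses to a single point carrying mass $\geq 8\pi\gamma$. This follows by applying Lemma \ref{firstlemma} with all sufficiently large constants $C$: since $B_{Cr_n}(x_{1,n})$ carries mass $8\pi\gamma+o_n(1)$ for every large $C$, no residual mass can sit at scales comparable to $r_n$, so any residual mass in the neck — and in particular the maximal-ring scale $\tau_n^C$ — must satisfy $\tau_n^C/r_n\to+\infty$. With this scale separation in hand, rescaling by $\tau_n^C$ sends the cluster ball $B_{Cr_n}(x_{1,n})$ to a vanishing neighbourhood of the origin, and both the singular-limit argument of Case $(i)$ and the Morse-index argument of Case $(ii)$ go through as direct adaptations of Proposition \ref{simpleblowuptheorem}, the inequality $\beta\geq 8\pi$ being even more favourable here since $\gamma\geq1$.
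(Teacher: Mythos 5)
Your proposal is correct and follows essentially the same route as the paper: Lemma \ref{firstlemma} for the inner ball, a contradiction argument on the neck with the two dyadic-ring alternatives of Proposition \ref{simpleblowuptheorem}, a singular Liouville limit with Dirac mass at least $8\pi\gamma$ (hence $e^{\hat{w}}\gtrsim\abs{x}^{-4}$, non-integrable) in the first case, and the Morse-index test functions in the second. The scale-separation point you flag ($\tau_n^C/d_g(x_{1,n},x_{2,n})\to+\infty$) is exactly the condition the paper imposes on its rescaling parameters, so the argument matches.
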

	
	\begin{proof}
		The arguments are the same used to prove Proposition \ref{simpleblowuptheorem}. We know that $B_{C d_g(x_{1,n},x_{2,n})}(x_{1,n})$ carries a limit mass of $8\pi \gamma$ by Lemma \ref{firstlemma}. Moreover, as a consequence of \eqref{P3},
		\begin{equation}\label{analogueassumption}
			\begin{split}
			\forall \rho>0, \,\, \exists C_\rho>0 \quad \text{such that, if} &\quad \int_{B_s(y)}\lambda he^{u_n}\dv\geq \rho, \quad B_s(y)\subseteq B_{\frac{1}{C}d_{1,n}}(x_{1,n})\backslash B_{Cd_g(x_{1,n},x_{2,n})}(x_{1,n}), \\
			&\text{then} \quad s\geq\frac{1}{C_\rho} dist_g(y,J_{1,n}).
		\end{split}
		\end{equation}
		We can now argue as in Proposition \ref{simpleblowuptheorem} and assume by contradiction that $\exists \delta >0$ such that
		\begin{equation*}
			\liminf_{n\to\infty}\int_{B_{\frac{1}{C}d_{1,n}}(x_{1,n})\backslash B_{Cd_g(x_{1,n},x_{2,n})}(x_{1,n})}\lambda he^{u_n}\dv\geq\delta.
		\end{equation*}
		By virtue of \eqref{analogueassumption}, we have two alternatives which are the respective of \eqref{case1mass} and \eqref{case2assumption}. As a consequence, we have the same cases $(i)$ and $(ii)$ of Proposition \ref{simpleblowuptheorem}.
		
		In case $(i)$, we can scale and obtain an analogue of Lemma \ref{singularbubbleextraction}, namely there exist sequences $b_{n,m}$ of real numbers and $\tau_{n,m}$ with $0<\tau_{n,m}<d_{1,n} \,\, \forall n,m$, $\frac{d_g(x_{1,n},x_{2,n})}{\tau_{n,m}}\to 0$ $\forall m$ and a diagonal subsequence $n(m)$ such that the scaled sequence $\hat{w}_{n(m)}(x):=\tilde{v}_{n(m)}(\tau_{n(m),m} x)+2\log (\tau_{n(m),n})+b_{n(m),m}$ converges in $C^{\alpha}_{loc}(\R^2\backslash\{0\})$ to a limit function $\hat{w}$ satisfying
		\begin{equation*}
			\begin{cases*}
				-\Delta \hat{w}=\mu e^{\hat{w}} +8\pi(\gamma+\beta)\delta_0 & \text{in $\R^2$} \\
				\int_{\R^2}\mu e^{\hat{w}}\,dx<+\infty,
			\end{cases*}
		\end{equation*} 
		for some $\beta\in [0,1)$,
		which leads to a contradiction since, by Green's representation formula, one would get $e^{\hat{w}(x)}\geq\frac{e^{-C}}{\abs{x}^{4(\gamma+\beta)}}\notin L^1(B_1(0))$.
		
		In case $(ii)$ instead we can repeat verbatim the arguments in the proof of Proposition \ref{simpleblowuptheorem} (just use $Cd_g(x_{1,n},x_{2,n})$ and $d_{1,n}$ in place of $R_nr_n$ and $s_n$ respectively) in order to contradict \eqref{morseindexproperty}. This concludes the proof.
	\end{proof}
	
		\medskip
		
		$\mathit{End \,\, of  \,\,proof\,\,  of \,\,Step\,\, 3.}$ We can iteratively apply Lemma \ref{firstlemma} and Lemma \ref{secondlemma} to all clusters of points in order to reach our conclusion.
		Without loss of generality, assume to have only one point of concentration, namely that $x_{1,n},\dots, x_{j,n}$ all converge to the same point $x\in M$ as $n\to\infty$.
		Assume $j\geq 2 $ in Step 1 and apply Lemma \ref{firstlemma} a first time.
		
		If $J_{1,n}=X_n$, then, arguing as in Proposition \ref{simpleblowuptheorem} and Lemma \ref{secondlemma}, we can prove that
		\begin{equation*}
			\int_{ B_{\frac{1}{2C}i(M)}(x_{1,n})}\lambda he^{u_n}\dv=8\pi j +o_n(1). 
		\end{equation*}
		It is then sufficient to reason as in Step 2 to get \eqref{step3quantization}.
		
		If instead Card$(J_{1,n})< j$, then we can also apply Lemma \ref{secondlemma} and then consider
		\begin{equation*}
			J_{2,n}:=\left\{x\in X_n\backslash J_{1,n}\mid\exists C>0 \,\, \text{such that }d_g(x,x_{1,n})\leq C d_{1,n}\right\}\not=\emptyset,
		\end{equation*}
		where $C$ does not depend upon $n$. Write $J_{2,n}=:\{x_{\gamma+1,n},\dots,x_{\gamma',n}\}$ with $\gamma'\leq j$. By Proposition \ref{simpleblowuptheorem} and Lemma \ref{secondlemma}, there exists a constant $C>1$ such that
		\begin{equation*}
			\int_{B_{\frac{1}{C}d_{1,n}}(x_{l,n})}\lambda he^{u_n}\dv=\begin{cases*}
				8\pi \gamma + o_n(1) & \text{if $l=1$} \\
				8\pi + o_n(1) & \text{if $l=\gamma+1,\dots,\gamma'$.}
			\end{cases*}
		\end{equation*}
		We can then apply Lemma \ref{firstlemma} to such case (using $x_{\gamma+1,n}$ in place of $x_{1,n}$) and conclude that, if $C$ is large enough, then
		\begin{equation*}
			\int_{B_{C d_{1,n}}(x_{\gamma+1,n})} \lambda h e^{u_n}\dv=8\pi \gamma' + o_n(1).
		\end{equation*}
		Repeating this argument finitely many times we obtain \eqref{step3quantization}. This concludes the proof of Step 3.
	\end{proof}

	\begin{proof}[End of Proof of Theorem \ref{maintheorem}]
		The global quantization \eqref{step3quantization} above together with the normalization assumption \eqref{normalization} implies that $\lambda\in 8\pi\N$ whenever $(u_n)_n$ is unbounded in $H^1$-norm. The Theorem follows.
	\end{proof}

	 \section{The case of singular equations}\label{sectionsingulareq}
	 
	 In this last Section we will briefly explain up to what extent it is possible to generalise the above blow-up analysis to the case of Palais-Smale sequences for the functional $I_\lambda$ associated to the singular problem \eqref{singularequation}, that is,
	 \begin{equation}\label{singularfunctional}
	 	I_\lambda (u)= \frac{1}{2}\int_M \abs{\nabg u}^2 \dv +\lambda \int_M u\dv -\lambda \log\left(\int_M \tilde{h} e^u \dv\right),
	 \end{equation} 
	 where we recall that
	 \begin{equation}
	 	\tilde{h}(x):=\bar{h}(x)e^{-4\pi\sum_{i=1}^m\alpha_i G(x,q_i)}, 
	 \end{equation}
	 with $\bar{h}>0$ regular and $\alpha_i>-1$ $\forall i=1,\dots,m$. Define the following ``critical" set of parameters:
	 \begin{equation*}
	 	\Gamma_{\underline{\alpha}}:=\left\{8\pi n+\sum_{i\in I}8\pi(1+\alpha_i) \, \mid\,n\in \N, \,\, I\subseteq\{1,\dots,m\}, \,\,n+\text{Card}(I)\not= 0\right\}.
	 \end{equation*}
	  The main result is:
	 
	 \begin{thm}\label{singulartheorem}
	 	Given $\lambda\notin \Gamma_{\underline{\alpha}}$, let $(u_n)_n$ be a (PS)-sequence for \eqref{singularfunctional} satisfying the normalization condition \eqref{normalization} and the second order property \eqref{morseindexproperty}; assume further that $\alpha_i\in(-1,1]$ $\forall i=1,\dots,m$. Then $(u_n)_n$ is bounded in $H^1(M)$; in particular, it subconverges to a solution of \eqref{singularequation}.
	\end{thm}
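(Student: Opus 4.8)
The plan is to repeat, \emph{mutatis mutandis}, the contradiction scheme behind Theorem~\ref{maintheorem}: assuming $(u_n)_n$ is unbounded in $H^1(M)$, I would run the blow-up analysis for the auxiliary sequence $v_n$ solving $-\dg v_n=\lambda(\tilde h e^{u_n}-1)$ with $\bar v_n=\bar u_n$, and show that the total mass $\lambda\int_M\tilde h e^{u_n}\dv$ subconverges to an element of $\Gamma_{\underline\alpha}$, contradicting $\lambda\notin\Gamma_{\underline\alpha}$. The first thing to check is that the preliminary machinery of Section~\ref{sectionconccompoactness} survives the replacement $h\mapsto\tilde h$. Since $\alpha_i>-1$ for every $i$, near each pole $q_i$ (those with $\alpha_i<0$) one has $\tilde h(x)\sim d_g(x,q_i)^{2\alpha_i}$ with $2\alpha_i>-2$, so that $\tilde h\in L^{p}(M)$ for some $p>1$; this is enough for Lemma~\ref{approxlemma}, for the minimal-mass Lemma~\ref{minimalmass}, and for the exponential comparison \eqref{exponentialestimate} to go through essentially unchanged (one inserts an extra H\"older factor to absorb $\tilde h$), while the zeros of $\tilde h$ at the points $q_i$ with $\alpha_i>0$ are harmless as they only improve integrability.

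The genuinely new feature appears in the bubble extraction of Section~\ref{sectionbubbleextr}, where I would split the concentration points according to whether they coincide with some $q_i$. Away from $\{q_1,\dots,q_m\}$ the potential $\tilde h$ is smooth and positive, so Proposition~\ref{bubbleextractionthm} applies verbatim and yields a regular bubble of mass $8\pi$. If instead a blow-up point is some $q_i$, then in the isothermal chart $\tilde V_n(y)\sim\lambda c\,|y|^{2\alpha_i}$, and after the rescaling \eqref{localisedscaledfunctions} the limit profile solves the \emph{singular} Liouville equation $-\Delta\hat w=\mu|x|^{2\alpha_i}e^{\hat w}$ in $\R^2$ with $\int_{\R^2}|x|^{2\alpha_i}e^{\hat w}<+\infty$ (the singular weight survives the scaling because it is homogeneous, the factor $\tau_n^{2\alpha_i}$ being absorbed by the normalization $b_n$). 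Here I would invoke the classification of Prajapat--Tarantello for the singular Liouville equation to conclude that such a profile carries the quantised mass $8\pi(1+\alpha_i)$. The neck analysis of Section~\ref{sectionsimpleblowup} then has to be re-run keeping track of the weight; crucially, the Morse-index argument of Case~$(ii)$ in Proposition~\ref{simpleblowuptheorem} is insensitive to $\tilde h$ and carries over word for word, since the test functions and the estimates \eqref{piece1}--\eqref{piece3} only involve the $L^1$-mass of $\tilde h e^{u_n}$ in dyadic rings.

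The hard part, and the precise source of the restriction $\alpha_i\le 1$, is the singular analogue of Case~$(i)$ (Lemma~\ref{singularbubbleextraction}). Rescaling at a neck scale around a cluster accumulating at $q_i$ produces a limit equation $-\Delta\hat w=\mu|x|^{2\alpha_i}e^{\hat w}+\kappa\,\delta_0$ in $\R^2$, with $\int_{\R^2}|x|^{2\alpha_i}e^{\hat w}<+\infty$ and $\kappa>0$ the mass collapsing into the origin. The delicate configuration is that of a \emph{regular} bubble, of mass $\kappa=8\pi$, colliding with $q_i$: Green's representation gives $\hat w(x)\ge\frac{\kappa}{2\pi}\log\frac1{|x|}-C=4\log\frac1{|x|}-C$ near the origin, so that $|x|^{2\alpha_i}e^{\hat w}\gtrsim|x|^{2\alpha_i-4}$ and $\int_0^1 r^{2\alpha_i-4}\,r\,dr$ \emph{diverges exactly when} $\alpha_i\le 1$. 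Thus for $\alpha_i\le 1$ the weight is too weak to restore integrability against an $8\pi$-Dirac, and the same contradiction as in the regular case rules out both the collision and the residual neck mass; for $\alpha_i>1$ the weight decays too fast and this obstruction evaporates, which is exactly why the method is confined to $\alpha_i\le 1$. (When $\kappa\ge 8\pi(1+\alpha_i)$, as for a collapsing \emph{singular} bubble, the divergence holds for every $\alpha_i>-1$, so that case is unproblematic.)

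With Case~$(i)$ disposed of, I would conclude as in Section~\ref{sectionglobalquantization}: organising the bubbles into clusters and running the singular versions of the Brezis--Merle dichotomy (the analogues of Claims~1 and~2, where the measure satisfies $\mu\ge 8\pi(1+\alpha_i)\delta_{q_i}$ at a singular point) shows that no mass is lost in the necks or inside the clusters beyond that of the bubbles, while a global Green's-function estimate as in Step~2 rules out mass away from the concentration points. Summing $8\pi$ over the regular bubbles and $8\pi(1+\alpha_i)$ over the singular ones yields $\lambda\in\Gamma_{\underline\alpha}$, the desired contradiction, and hence the boundedness of $(u_n)_n$ in $H^1(M)$ together with subconvergence to a solution of \eqref{singularequation}.
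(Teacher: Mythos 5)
Your proposal is correct and follows essentially the same route as the paper: reduce to the regular analysis away from the $q_i$, extract singular bubbles of mass $8\pi(1+\alpha_i)$ at the $q_i$ via Prajapat--Tarantello, and locate the restriction $\alpha_i\le 1$ precisely in the configuration of a regular $8\pi$ mass collapsing onto $q_i$, where the Green's-function lower bound gives $\abs{x}^{2\alpha_i}e^{\hat w}\gtrsim\abs{x}^{2\alpha_i-4}$, non-integrable exactly for $\alpha_i\le 1$ --- which is exactly where the paper places it. The only point you gloss over is the dichotomy in the bubble extraction when $x_n\to q_i$ but $d_g(x_n,q_i)/r_n\to+\infty$ (the paper's Case (2), which uses a modified rescaling exponent $2(1+\alpha_i\gamma_n)\log r_n$ to produce a \emph{regular} bubble near the singular point); your later neck analysis shows you are aware of this scenario, so this is an omission of detail rather than a gap.
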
	
	 
	The additional assumption $\alpha_i\leq 1$ appears to be necessary in order to carry out the induction in Step 3 of Section \ref{sectionglobalquantization}. However, as already pointed out, we expect that it should be possible to overcome such limitation and thus prove Theorem \ref{singulartheorem} for any $\alpha_i>-1$, $i=1,\dots,m$, obtaining a compactness result for (PS)-sequences satisfying \eqref{morseindexproperty} identical to the one for sequences of exact solutions described in \cite{bartolucci-tarantello-2002-CMP}, \cite{bartolucci-montefusco-2007}.

	We now outline the main technical differences between the analysis of (PS)-sequences related to singular and regular Liouville functionals respectively.

	To begin, we only know in general that $0\leq\tilde{h}\in L^p(M)$ for some $p\in(1,+\infty]$ due to the eventual presence of poles. As in the regular case, if $(u_n)_n$ is bounded in $H^1(M)$, then there is nothing to prove. We thus assume by contradiction $(u_n)_n$ to be unbounded in $H^1(M)$, and define $(v_n)_n$ according to \eqref{vn}. Being $\normps{\tilde{h}e^{u_n}}{1}{M}=1$ $\forall n\in \N$, by weak compactness of measures we can find a nonnegative Radon measure $\nu$ and a subsequence along which $\lambda \tilde{h}e^{u_n}\rightharpoonup\nu$ weakly in the sense of measures. Define
	\begin{equation*}
		\mathscr{S}:=\left\{x\in M\mid \nu(x)\geq\frac{4\pi}{p'}\right\},
	\end{equation*}
	with $p'$ the Sobolev conjugate exponent of $p$: $\frac{1}{p}+\frac{1}{p'}=1$.
	\begin{claim*}
		Given $(u_n)_n$ as above, then $\mathscr{S}\not =\emptyset$.
	\end{claim*}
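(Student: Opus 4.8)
The plan is to argue by contradiction: assuming $\mathscr{S}=\emptyset$, I would show that $\int_M \tilde{h}e^{u_n}\dv=o_n(1)$, which contradicts the normalization \eqref{normalization}. This is the singular counterpart of the argument ruling out the first alternative of Lemma \ref{minimalmass} at the end of Section \ref{sectionconccompoactness}; the only structural novelty is that, since now $\tilde{h}\in L^p(M)$ with $p$ possibly finite, every passage between $e^{u_n}$ and $\tilde{h}e^{u_n}$ has to go through Hölder's inequality with the conjugate exponent $p'$, which is exactly why the relevant mass threshold is $\frac{4\pi}{p'}$ rather than $4\pi$.

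First I would set up a covering of $M$ adapted to $\nu$. Since $\mathscr{S}=\emptyset$, for every $x\in M$ one has $\nu(\{x\})<\frac{4\pi}{p'}$; using $\nu(\{x\})=\lim_{r\to 0}\nu(\overline{B_r(x)})$ I can pick $r_x>0$ with $\nu(\overline{B_{r_x}(x)})<\frac{4\pi}{p'}$, and by the weak convergence $\lambda\tilde{h}e^{u_n}\dv\rightharpoonup\nu$ together with upper semicontinuity of the total mass on the compact set $\overline{B_{r_x}(x)}$, this gives $\limsup_n\int_{B_{r_x}(x)}\lambda\tilde{h}e^{u_n}\dv\leq\nu(\overline{B_{r_x}(x)})<\frac{4\pi}{p'}$. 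Compactness of $M$ then yields finitely many balls $B_j:=B_{r_{x_j}/2}(x_j)$ covering $M$ and a $\delta>0$ such that, for $n$ large and up to a subsequence, $\int_{\tilde{B}_j}\lambda\tilde{h}e^{u_n}\dv<\frac{4\pi}{p'}-\delta$ on the enlarged balls $\tilde{B}_j:=B_{r_{x_j}}(x_j)$.

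Next I would prove local exponential integrability of $v_n$ exactly as in Lemma \ref{minimalmass}. Writing $v_n-\bar{v}_n=\int_M G(x,y)\,\lambda(\tilde{h}e^{u_n}-1)(y)\dv(y)$ via \eqref{greenrepresentationformula}, the constant part $-\lambda$ and the far-away contribution $\int_{M\setminus\tilde{B}_j}$ are uniformly bounded, so the only singular contribution comes from $\int_{\tilde{B}_j}G(x,y)\,\lambda\tilde{h}e^{u_n}$. Applying Jensen's inequality with respect to the probability measure $\lambda\tilde{h}e^{u_n}\dv/\|\lambda\tilde{h}e^{u_n}\|_{L^1(\tilde{B}_j)}$ together with the logarithmic decay \eqref{greenfunctiondecay}, as in the chain of estimates in the proof of Lemma \ref{minimalmass}, I obtain $\int_{B_j}e^{Q(v_n-\bar{v}_n)}\dv\leq C$ whenever $Q\,\|\lambda\tilde{h}e^{u_n}\|_{L^1(\tilde{B}_j)}<4\pi$. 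Since the local mass is $<\frac{4\pi}{p'}-\delta$, the admissible range is $Q<4\pi/(\frac{4\pi}{p'}-\delta)$, whose right endpoint is strictly larger than $p'$; hence I may fix some $Q>p'$ working uniformly in $n$ and $j$. Summing over the finite cover gives $\int_M e^{Q(v_n-\bar{v}_n)}\dv\leq C$, and since $\bar{v}_n=\bar{u}_n\to-\infty$ (which follows from the unboundedness of $(u_n)_n$ and the boundedness of $I_\lambda(u_n)$, using the expression \eqref{singularfunctional}, exactly as in the regular case), this yields $\int_M e^{p'v_n}\dv=o_n(1)$.

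Finally I would transfer the decay from $v_n$ to $u_n$ and close the contradiction. Because $Q>p'$, I have room to repeat the Moser-Trudinger comparison carried out in \eqref{exponentialestimate}, which, combined with $\|u_n-v_n\|_{H^1(M)}=o_n(1)$ from Lemma \ref{approxlemma}, gives $\int_M|e^{p'u_n}-e^{p'v_n}|\dv=o_n(1)$, so that $\int_M e^{p'u_n}\dv=o_n(1)$. Hölder's inequality with $\tilde{h}\in L^p(M)$ then produces $\int_M\tilde{h}e^{u_n}\dv\leq\|\tilde{h}\|_{L^p(M)}\big(\int_M e^{p'u_n}\dv\big)^{1/p'}=o_n(1)$, contradicting \eqref{normalization}. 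The main obstacle is the bookkeeping in the third step: one must verify that the \emph{strict} inequality $\nu(\{x\})<\frac{4\pi}{p'}$ translates into an admissible exponent $Q$ strictly above $p'$, since it is precisely this strict gap that leaves room for the subsequent Hölder and Moser-Trudinger comparisons at exponent $p'$. The threshold $\frac{4\pi}{p'}$ in the definition of $\mathscr{S}$ is tuned exactly so that this works.
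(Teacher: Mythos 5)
Your proof is correct, but it takes a somewhat different route from the paper's (very terse) argument. The paper assumes $\mathscr{S}=\emptyset$, localises \cite[Corollary 4]{brezismerle-1991} to obtain that $(v_n^+)_n$ is uniformly bounded in $L^\infty(M)$, and then couples this pointwise upper bound with the first alternative of Lemma \ref{minimalmass} (which only yields exponential integrability at some exponent $q>1$ close to $1$) and with $\bar{v}_n=\bar{u}_n\to-\infty$: the $L^\infty$ bound is what upgrades the integrability from exponent $q$ to exponent $p'$, after which H\"older against $\tilde{h}\in L^p$ contradicts \eqref{normalization}. You instead bypass Brezis--Merle entirely: you rerun the Green's function/Jensen computation from the proof of Lemma \ref{minimalmass} with the refined local mass bound $\int_{\tilde B_j}\lambda\tilde{h}e^{u_n}\dv<\frac{4\pi}{p'}-\delta$ coming directly from $\mathscr{S}=\emptyset$, and you correctly observe that the admissibility condition $Q\cdot\text{(local mass)}<4\pi$ then leaves room for an exponent $Q$ \emph{strictly above} $p'$, so that $\int_M e^{p'v_n}\dv=o_n(1)$ follows from $\bar v_n\to-\infty$ alone, and the comparison \eqref{exponentialestimate} together with H\"older closes the contradiction. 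The two arguments exploit the threshold $\frac{4\pi}{p'}$ in dual ways (the paper through the hypothesis of Brezis--Merle's corollary, you through the exponent in the Riesz-potential estimate); yours is more self-contained and quantitative, at the modest cost of redoing the covering and Jensen bookkeeping, while the paper's is shorter by citation. The only points worth making fully explicit in your write-up are the passage from $\nu(\{x\})<\frac{4\pi}{p'}$ to the bound on the $L^1$-norm of the full datum $f_n=\lambda(\tilde h e^{u_n}-1)$ on $\tilde B_j$ (the extra $\lambda\,Vol_g(\tilde B_j)$ is harmless for $r_x$ small), and the fact that $\bar u_n\to-\infty$ still follows from unboundedness in the singular setting because $\int_M\tilde h e^u\dv$ is controlled by Moser--Trudinger via H\"older; you flag both, so the argument stands.
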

	 This can be proved by contradiction: if $\mathscr{S}=\emptyset$, then we can localise \cite[Corollary 4]{brezismerle-1991} and exploit the compactness of $M$ to obtain that $(v_n^+)_n$ is uniformly bounded in $L^\infty(M)$. Coupling this information with Lemma \ref{minimalmass} and the normalization condition in \eqref{vn} we can easily derive a contradiction.
	 
	 \bigskip

	 Once we know that the mass should concentrate somewhere, we can then proceed to isolate the blow-up profiles. Let $\bar{\alpha}:=\min\{0,\alpha_1,\dots,\alpha_m\}$ and, for $\rho\in (0, \pi(1+\bar{\alpha}))$, consider $(x_n)_n\subseteq M$, $(r_n)_n$ and $(\tilde{r}_n)_n$ satisfying \eqref{rhodef} and \eqref{scalingassumptions}.
	 Without loss of generality, assume that $x_n\to q\in M$ as $n\to+\infty$. If $q\notin\{q_1,\dots,q_m\}$, then we can apply Proposition \ref{bubbleextractionthm} to extract a bubble.
	 
	 Assume instead that $q=q_i$ for some $i\in\{1,\dots,m\}$. We then need to look at the sequence $\frac{d_g(x_n,q)}{r_n}$; we have two possibilities:
	 
	 	\underline{Case $(1)$.} Suppose that, up to subsequences, $\frac{d_g(x_n,q)}{r_n}\leq C$ for some positive constant $C$.
	 	
	 	In this case, pick $(U,\psi)$ isothermal chart centered at $q$ and such that $\psi(U)=B_{\delta}(0)\subseteq \R^2$ with conformal factor $e^\phi$. Let then $\eta$ and $\tilde{u}_n$, $\tilde{v}_n$ be defined as in \eqref{etadef}, \eqref{localisedfunctions} respectively (with $\psi$ in place of $\psi_n$).
	 We now scale those sequences as follows:
	 \begin{equation*}
	 	\begin{cases*}
	 		\hat{v}_n(x):=\tilde{v}_n(r_nx)+2(1+\alpha_i)\log r_n & \text{for $x\in B_{\frac{\delta}{r_n}}(0)$}, \\
	 		\hat{u}_n(x):=\tilde{u}_n(r_nx)+2(1+\alpha_i)\log r_n \\
	 		\hat{V}_n(x):=V(r_nx),
	 	\end{cases*}
	 \end{equation*}
	 so that
	 \begin{equation*}
	 	\begin{cases*}
	 		-\Delta \hat{v}_n=:\abs{x}^{2\alpha_i} \hat{V}_n(x)e^{\hat{u}_n(x)} & \text{in $B_\frac{\delta}{r_n}(0)$,} \\
	 		\int_{B_1(x)}\abs{x}^{2\alpha_i}\hat{V}_ne^{\hat{u}_n}\,dx<\pi(1+\bar{\alpha}) & \text{$\forall B_1(x)\subseteq B_{\frac{\delta}{r_n}}(0)$}.
	 	\end{cases*}
	 \end{equation*}
	 Let $R>1$ and define $(a_n)_n$, $(b_n)_n$, $(\hat{w}_n)_n$, $(\xi_n)_n$ and $(\hat{\xi}_n)_n$ as in \eqref{w_nhat}, \eqref{xi_ndef} respectively. The sequence $(a_n)_n$ is bounded from above, and we can argue as in the proof of Proposition \ref{bubbleextractionthm} to prove that \eqref{quadr} holds for some $q>1$. At this point, in order to estimate $(a_n)_n$ from below, we need to use the additional fact that $(\hat{w}_n^+)_n$ is bounded in $L^\infty(B_R)$, which follows, as in the proof of Lemma \ref{firstlemma}, from an application of \cite[Corollary 4]{brezismerle-1991}.
	 As soon as we have $\abs{a_n}\leq C$, we can argue as in Proposition \ref{bubbleextractionthm} to prove that, along a subsequence, one has $\hat{w}_n\to\hat{w}$ in $C^\alpha_{loc}(\R^2)\cap H^1_{loc}(\R^2)$, where $\hat{w}$ satisfies
	 \begin{equation*}
	 	\begin{cases*}
	 		-\Delta \hat{w}=\mu\abs{x}^{2\alpha_i}e^{\hat{w}} & \text{in $\R^2$}, \\
	 		\int_{\R^2}\mu\abs{x}^{2\alpha_i}e^{\hat{w}}\,dx=8\pi(1+\alpha_i),
	 	\end{cases*}
	 \end{equation*}
	 for some constant $\mu>0$. By the classification result of \cite{prajapattarantello}, we thus get a bubble of total mass $8\pi(1+\alpha_i)$.

	 \underline{Case $(2)$.} Suppose instead that, up to subsequences, $\frac{d_g(x_n,q)}{r_n}\xrightarrow{n\to\infty}+\infty$ and that the sequence is monotone increasing. Define $(U_n,\psi_n)$, $\phi_n$, $\eta_n$, $\tilde{u}_n$, $\tilde{v}_n$, $\tilde{V}_n$ as in Section \ref{sectionbubbleextr}. Let $(\gamma_n)_n\subseteq(0,1)$ be a sequence of numbers such that $\frac{d_g(q,x_n)}{r_n^{\gamma_n}}=1$ $\forall n\in \N$; in particular, $\gamma_n\to\gamma\in [0,1)$ since $(\gamma_n)_n$ is monotone decreasing. Define $y_n:=\psi_n(q)$ and consider the scaled sequences
	 \begin{equation*}
	 	\begin{cases*}
	 		\hat{v}_n(x):=\tilde{v}_n(r_nx)+2(1+\alpha_i\gamma_n)\log r_n & \text{for $x\in B_{\frac{\delta}{r_n}}(0)$}, \\
	 		\hat{u}_n(x):=\tilde{u}_n(r_nx)+2(1+\alpha_i\gamma_n)\log r_n. \\
	 	\end{cases*}
	 \end{equation*}
	 Write $\tilde{V}_n(x)=:\abs{x-y_n}^{2\alpha_i}V_n(x)$; then
	 \begin{equation*}
	 	-\Delta \hat{v}_n(x)=\hat{V}_n(x)e^{\hat{u}_n(x)}, \quad \text{where} \quad \hat{V}_n(x):=\abs{r_n^{1-\gamma_n}x+y_nr_n^{-\gamma_n}}^{2\alpha_i}V_n(r_nx).
	 \end{equation*}
	 Let $R>1$ fixed; for $n$ large enough, $\hat{V}_n$ is smooth and bounded from above and below by two positive constants in $B_R$. We can thus argue as in Proposition \ref{bubbleextractionthm} and isolate a concentration profile with limit mass of $8\pi$.
	 
	 \bigskip
	 
	 We now get to the delicate point of proving quantization in a simple blow-up. If $x_n\to x\notin\{q_1,\dots,q_m\}$, then everything works as in the ``regular" case, so, from now on, we'll assume that $x_n\to q_i$ for some $i\in\{1,\dots,m\}$. We migh have two different situations, related to the different alternatives above:
	 
	  \underline{Case $(1)$.} Let $(x_n)_n$, $(r_n)_n$ and $(s_n)_n$ be a simple blow-up for $(u_n)_n$ with central profile of mass $8\pi(1+\alpha_i)$: in other words, the three sequences satisfy Definition \ref{simpleblowupdefin} with \eqref{simpleblowupbubble} replaced by:
	 \begin{equation*}
	 	\exists R_n\to\infty \quad \text{such that} \,\, \frac{R_nr_n}{s_n}\to0 \,\,\, \text{and} \quad \norm{\hat{w}_n-\log\frac{\mu}{\left(1+\frac{1}{8(1+\alpha_i)}\abs{\cdot}^{2(1+\alpha_i)}\right)^2}}_{C^\alpha(B_{R_n})} \xrightarrow{n\to\infty}0.
	 \end{equation*}
	 We now want to prove the analogue version of Proposition \ref{simpleblowuptheorem} for this case, that is, we want to find a constant $C>1$ such that
	 \begin{equation*}
	 	\int_{B_\frac{s_n(x_n)}{C}} \lambda he^{u_n}\dv=8\pi(1+\alpha_i)+o_n(1).
	 \end{equation*}
	 This is indeed possible, and the proof follows the one of Proposition \ref{simpleblowuptheorem}: we argue by contradiction and assume \eqref{contradictionassumption} to hold; in this way we face the two alternatives of presence of residual mass in a dyadic ring \eqref{case1mass} or ``spreading" of residual mass \eqref{case2assumption}. In this last case, the proof proceeds exactly as in the regular case using suitable test functions to reach a contradiction with \eqref{morseindexproperty}. If instead we suppose \eqref{case1mass} to hold, then we can argue as in Lemma \ref{singularbubbleextraction} and, in the end, we obtain a function $\hat{w}$ which is a solution for
	 \begin{equation*}
	 	\begin{cases*}
	 		-\Delta \hat{w}=\mu\abs{x}^{2\alpha_i} e^{\hat{w}}+\beta \delta_0 &\text{in $\R^2$,} \\
	 		\int_{\R^2}\abs{x}^{2\alpha_i} e^{\hat{w}}\,dx<+\infty,
	 	\end{cases*}
	 \end{equation*}
	 for some $\mu>0$ and $\beta \in[8\pi(1+\alpha_i),\lambda)$. However, a quick computation with the Green representation formula readily shows that there exists $C>0$ such that
	 \begin{equation*}
	 	\hat{w}(x)\geq\frac{\beta}{2\pi}\log\frac{1}{\abs{x}}-C \qquad \text{for $x\in B_1(0)$},
	 \end{equation*}
	 therefore
	 \begin{equation*}
	 	\abs{x}^{2\alpha_i}e^{\hat{w}(x)}\geq\tilde{C}\frac{\abs{x}^{2\alpha_i}}{\abs{x}^{\frac{\beta}{2\pi}}}\geq\frac{\tilde{C}}{\abs{x}^{4+2\alpha_i}}\notin L^1(B_1(0)),
	 \end{equation*}
	 which is a contradiction.

	  \underline{Case $(2)$.} Assume that $(x_n)_n$, $(r_n)_n$ and $(s_n)_n$ are a simple blow-up for $(u_n)_n$ satisfying \eqref{simpleblowupbubble} (with central profile of mass $8\pi$) and with $x_n\to q_i$. As above, we may argue by contradiction and consider the same alternative between \eqref{case1mass} and \eqref{case2assumption}. Alternative \eqref{case2assumption} can be dealt with the usual argument, so we focus on \eqref{case1mass}, where we may consider $C_m\to+\infty$ and $\tau_{n,m}$ as in Section \ref{sectionsimpleblowup}. Consider now $\frac{\tau_{n,m}}{d_g(x_n,q_i)}$; up to subsequences, we might assume that
	  \begin{equation*}
	  	\exists \,\, \lim_{m\to\infty}\lim_{n\to\infty} \frac{\tau_{n,m}}{d_g(x_n,q_i)}=:L\in [0,+\infty].
	  \end{equation*}
	 Now, if $L<+\infty$, we can reason as in Lemma \ref{singularbubbleextraction} and reach a contradiction. However, if we assume instead that $L=+\infty$, then, arguing as in Lemma \ref{singularbubbleextraction}, we end up finding a scaled subsequence which converges to a solution of
	 \begin{equation*}
	 	\begin{cases*}
	 		-\Delta \hat{w}=\mu\abs{x}^{2\alpha_i} e^{\hat{w}}+\beta \delta_0 &\text{in $\R^2$,} \\
	 		\int_{\R^2}\abs{x}^{2\alpha_i} e^{\hat{w}}\,dx<+\infty,
	 	\end{cases*}
	 \end{equation*}
	 where this time $\beta\in[8\pi;\lambda)$. As a consequence, by virtue of Green's representation formula, we see that
	 \begin{equation*}
	 	\abs{x}^{2\alpha_i}e^{\hat{w}(x)}\geq\frac{\tilde{C}}{\abs{x}^{4-2\alpha_i}} \qquad \text{in $B_1(0)$},
	 \end{equation*}
	 so we reach a contradiction with the integrability condition \underline{only in case $\alpha_i\leq 1$}, which is the additional assumption considered in Theorem \ref{singulartheorem}.

	 \bigskip
	 
	 Finally, we can follow the arguments in Section \ref{sectionglobalquantization} with minor changes and reach the final conclusion that $\lambda\in \Gamma_{\underline{\alpha}}$ whenever $(u_n)_n$ is unbounded in $H^1$-norm, completing the proof of Theorem \ref{singulartheorem}.

	 \appendix
	 
	 \section{Proof of Lemma \ref{singularbubbleextraction}}\label{sectionappendix}
	 
	 The proof closely follows that of Proposition \ref{bubbleextractionthm}, thus we will only highlight the main differences. Define $\hat{u}_{n,m}$, $\hat{v}_{n,m}$ as in \eqref{hat1,nfunctions}. For $R>1$ and $n,m$ large enough, let $a_{n,m}:=\frac{1}{\abs{B_{2R}}}\int_{B_{2R}}\hat{u}_{n,m}\,dx$, and
	 $\hat{w}_{n,m}:=\hat{v}_{n,m}+b_{n,m}$ be such that
	 \begin{equation*}
	 	\frac{1}{\abs{B_{2R}}}\int_{B_{2R}}\hat{w}_{n,m}\,dx=\frac{1}{\abs{B_{2R}}}\int_{B_{2R}}\hat{u}_{n,m}\,dx.
	 \end{equation*}
	 Define also
	 \begin{equation*}
	 	\xi_{n,m}:=\psi_R\hat{w}_{n,m}+(1-\psi_R)a_{n,m}, \qquad \hat{\xi}_{n,m}:=\xi_{n,m}-a_{n,m};
	 \end{equation*}
	 here $\psi_R$ is a smooth radial cutoff which is identically $1$ inside $B_R$ and vanishes outside $B_{2R}$. By assumption, this time we got \eqref{case1localmass} instead of \eqref{scalingassumptions}, that is, there exists $\nu_R>0$ such that $\forall \,\, r<\nu_R$ one has
	 \begin{equation*}
	 	\int_{B_r(x)}\hat{V}_{n,m}e^{\hat{u}_{n,m}}\,dx<2\pi \qquad \forall x\in B_R\backslash B_{\frac{1}{R}}.
	 \end{equation*}
	  Moreover, as in Proposition \ref{bubbleextractionthm}, we can prove the validity of \eqref{triangolorovesciato} in our setting. As a consequence, we are in position to apply Remark \ref{localminmasslemma} to get
	 \begin{equation*}
	 	\int_{B_{R}\backslash B_{\frac{1}{R}}} e^{q\hat{\xi}_{n,m}}\leq C \qquad \text{uniformly in $n,m\in \N$ for some $q>1$.}
	 \end{equation*}
	 Being $a_{n,m}\leq\frac{1}{\abs{B_{2R}}}\int_{B_{2R}}e^{\hat{u}_{n,m}}\leq C$, this further implies
	 \begin{equation*}
	 	\int_{B_{R}\backslash B_{\frac{1}{R}}} e^{q\hat{w}_{n,m}}\leq C \qquad \text{uniformly in $n,m\in \N$ for some $q>1$.}
	 \end{equation*}
	 Notice also that $\norm{\hat{w}_{n,m}-\hat{u}_{n,m}}_{H^1(B_{2R})}=o_n(1)$. Hence, given $p\in [1,q)$, we can argue as in \eqref{exponentialestimate} to obtain
	 \begin{equation}\label{T3}
	 	\int_{B_{R}\backslash B_{\frac{1}{R}}}\abs{e^{p\hat{u}_{n,m}}-e^{p\hat{w}_{n,m}}}\,dx=o_n(1).
	 \end{equation}
	 Then
	 \begin{equation*}
	 		C e^{a_{n,m}}\geq e^{a_{n,m}}\int_{B_{R}\backslash B_{\frac{1}{R}}} e^{\hat{\xi}_{n,m}}\geq \frac{1}{C}\int_{\psi_n^{-1}(B_{R\tau_{n,m}}\backslash B_{\frac{\tau_{n,m}}{R}})} e^{u_n}\dv +o_n(1)\geq \frac{1}{C}\delta_{C_m}',
	 \end{equation*}
	 where $\delta_{C_m}'$ and the last inequality follow from \eqref{case1mass}. 
	 Therefore $\abs{a_{n,m}}\leq C(m)$, which, together with \eqref{T3} and standard elliptic estimates, leads to
	 \begin{equation*}
	 	\norm{\hat{w}_{n,m}}_{W^{2,p}(B_{\frac{R}{2}}\backslash B_{\frac{2}{R}})}\leq C\left(\norm{\hat{w}_{n,m}}_{L^p(B_R\backslash B_{\frac{1}{R}})}+\norm{\hat{V}_{n,m}e^{\hat{u}_{n,m}}}_{L^p(B_R\backslash B_{\frac{1}{R}})}\right)\leq C(m).
	 \end{equation*}
	  As a consequence, we can use a diagonal argument to find a subsequence $n(m)\to+\infty$  such that $\hat{w}_{n(m),m}$ strongly subconverges in $C^{0,\alpha}_{loc}(\R^2\backslash\{0\})\cap H^1_{loc}(\R^2\backslash \{0\})$ to a limit function $\hat{w}$.
	 
	 Finally, we show that $\hat{w}$ solves \eqref{singularlimitequation} in the sense of distributions. Multiply the equation solved by $\hat{w}_{n(m),m}$ for a test function $\varphi$ and integrate to get, for $n$ large enough,
	 \begin{equation}\label{TT}
	 	\int_{\R^2} \hat{w}_{n(m),m}(-\Delta \varphi) \,dx =\int_{\R^2} \hat{V}_{n(m),m}e^{\hat{u}_{n(m),m}}\varphi\,dx.
	 \end{equation}
	 By virtue of Proposition \ref{bubbleextractionthm}, we know that $\int_{B_\frac{R_{n(m)}r_{n(m)}}{\tau_{n(m),m}}}\hat{V}_{n(m),m} e^{\hat{u}_{n(m),m}}\to 8\pi$ as $m\to\infty$; moreover, being \eqref{T3} true $\forall R>1$, we see that, for $n$ large enough, 
	 	\begin{equation*}
	 		\int_{\R^2}\hat{V}_{n(m),m}e^{\hat{u}_{n(m),m}}\varphi\,dx\xrightarrow{m\to\infty}\int_{\R^2}\mu e^{\hat{w}}\varphi\,dx + \beta\varphi(0),
	 	\end{equation*}
	 	for some $\beta\in [8\pi,\lambda)$.
	 Here we also used the fact that $\hat{V}_{n(m),m}e^{\hat{u}_{n(m),m}}$ is uniformly bounded in $L^1$. This same fact also implies that $\hat{w}_{n(m),m}$ is equibounded in $W^{1,q}(B_1)$ $\forall q\in[1,2)$, see \cite{stampacchia1965egularity}, and, by Sobolev embedding, also in $L^p(B_1)$ $\forall p\geq 1$. But the a.e. convergence of $\hat{w}_{n(m),m}$ and the $L^p$ bound give $\hat{w}_{n(m),m}\to \hat{w}$ in $L^1(B_1)$; as a consequence, we can also pass to the limit in the left hand side of \eqref{TT} and recover that $\hat{w}$ is a distributional solution for \eqref{singularlimitequation}.

	 \bigskip 
	 
	 \begin{acknowledgements}
	 	We are grateful to Andrea Malchiodi for suggesting the problem, for his constant support and for many useful ideas and key observations. Also, we have benefited from the valuable comments of Daniele Bartolucci, Mattia Freguglia and Gabriella Tarantello. The author is a member of GNAMPA, as part of INdAM.
	 \end{acknowledgements}

	 \bibliography{bibliography-Liouville-equations}
	 \bibliographystyle{alpha}

\end{document}